\theoremstyle{plain}
\newtheorem{theorem}{Theorem}[section]
\newtheorem*{theorem*}{Theorem}
\newtheorem*{maintheorem*}{Main Theorem}
\newtheorem{proposition}[theorem]{Proposition}
\newtheorem{corollary}[theorem]{Corollary}
\newtheorem{lemma}[theorem]{Lemma}
\newtheorem*{conjecture*}{Conjecture}
\theoremstyle{definition}
\newtheorem{definition}[theorem]{Definition}
\newtheorem*{definition*}{Definition}
\newtheorem*{example*}{Example}
\newtheorem*{notation*}{Notation}
\newtheorem*{notation-conv*}{Notation and convention}
\newtheorem*{convention*}{Convention}
\theoremstyle{remark}
\newtheorem{remark}[theorem]{Remark}
\def\co{\colon\thinspace}
\newcommand{\Z}{{\mathbb Z}}
\newcommand{\C}{{\mathbb C}}
\newcommand{\F}{{\mathbb F}}
\newcommand{\SL}[1][2]{{\mathrm{SL}_{#1}(\C)}}
\newcommand{\GL}{\mathrm{GL}}
\newcommand{\sll}{\mathfrak{sl}_2(\C)}
\newcommand{\sllt}{\C(t) \otimes \sll}
\newcommand{\matrixE}{\begin{pmatrix} 0 & 1 \\ 0 & 0\end{pmatrix}}
\newcommand{\matrixH}{\begin{pmatrix} 1 & 0 \\ 0 & -1\end{pmatrix}}
\newcommand{\matrixF}{\begin{pmatrix} 0 & 0 \\ 1 & 0\end{pmatrix}}
\newcommand{\sqrtrep}{\hbox{\small $(\! \sqrt{-1} \,)^{[\,\cdot\,]}$}}
\newcommand{\signrep}{(-1)^{[\, \cdot \,]}}
\newcommand{\trace}{{\rm tr}\,}
\newcommand{\I}{\mathbf{1}}
\newcommand{\bm}[1]{\mbox{\boldmath{$#1$}}}
\newcommand{\bnd}[1]{\partial_{#1}}
\newcommand{\basisM}[1][i]{\mathbf{c}^{#1}}
\newcommand{\basisH}[1][i]{\mathbf{h}^{#1}}
\newcommand{\basisBt}[1][i]{\tilde{\mathbf{b}}^{#1}}
\newcommand{\basisHt}[1][i]{\tilde{\mathbf{h}}^{#1}}
\newcommand{\knotexterior}{E_K}
\newcommand{\coverExterior}{\widehat{\knotexterior}}
\newcommand{\boundaryTorus}{\partial \knotexterior}
\newcommand{\bpt}{pt}
\newcommand{\univcover}[1]{{\widetilde #1}}
\newcommand{\lift}[1]{\tilde{#1}}
\newcommand{\knotgroup}{\pi_1(\knotexterior)}
\newcommand{\liftknotgroup}{\pi_1(\coverExterior)}
\newcommand{\piDB}{\pi_1(\Sigma_2)}
\newcommand{\nclos}[1]{\langle\!\langle \, #1 \, \rangle\!\rangle}
\newcommand{\repsp}[1]{R(#1, \SL)}
\newcommand{\Tor}[2]{\mathop{\mathrm{Tor}}\nolimits (#1;#2)}
\newcommand{\TorMV}[1]{\mathop{\mathrm{Tor}}\nolimits (#1)}
\newcommand{\TorCpx}[1]{\mathop{\mathrm{Tor}}\nolimits (#1)}
\newcommand{\twistedAlex}[3][t]{\Delta_{#2, \alpha \otimes #3}(#1)}
\newcommand{\twistedAlexAd}[3][t]{\Delta_{#2, {\alpha \otimes Ad \circ #3}}(#1)}
\newcommand{\ie}{i.e.,\,}
\begin{document}


\title[Twisted Alexander, Character varieties, R--torsion of double branched covers]{
  Twisted Alexander polynomials, character varieties and Reidemeister torsion of
  double branched covers
}

\author{Yoshikazu Yamaguchi}

\address{Department of Mathematics,
  Tokyo Institute of Technology
  2-12-1 Ookayama, Meguro-ku Tokyo, 152-8551, Japan}
\email{shouji@math.titech.ac.jp}

\date{\today}

\keywords{the twisted Alexander polynomial; knots; metabelian representations; character varieties; Reidemeister torsion; branched coverings}
\subjclass[2000]{Primary: 57M27, 57M05, 57M12, Secondary: 57M25}

\begin{abstract}
  We give an extension of Fox's formula of the Alexander polynomial
  for double branched covers over the three--sphere.
  Our formula provides the Reidemeister torsion of a double branched cover
  along a knot for a non--trivial one dimensional representation
  by the product of two factors derived from the knot group.
  One of the factors is determined by the twisted Alexander polynomial and the other
  is determined by a rational function on the character variety.
  As an application, we show that these products distinguish
  isotopy classes of two--bridge knots up to mirror images.
\end{abstract}


\maketitle

\section{Introduction}
This paper is intended as an extension of Fox's formula of
the Alexander polynomial for knots in the theory of Reidemeister torsion.
Fox's formula is an application of the Alexander polynomial,
which gives a bridge between the three--dimensional topology and the knot theory.
This means that
Fox's formula provides a computation method to determine the order of
the first homology group of a finite cyclic cover $\Sigma_n$ over
$S^3$ branched along a knot $K$ when $\Sigma_n$ is a rational homology sphere.
It is expressed as
$$
|H_1(\Sigma_n;\Z)|
=\prod_{\ell=0}^{n-1} \Delta_K(e^{\frac{2\pi\sqrt{-1} \ell}{n}})
$$
where $\Delta_K(t)$ is the Alexander polynomial of $K$.
This formula has been extended to links and finite abelian branched covers,
which is due to~\cite{MayberryMurasugi82}.
Fox's formula was also extended for links and finite abelian branched covers over
integral homology spheres 
as an equality of Reidemeister torsions
by J.~Porti~\cite{Porti:MayberryMurasugi}. 

From the viewpoint of Reidemeister torsion,
we can regard Fox's formula as a framework connecting 
two Reidemeister torsions of a cyclic branched cover
and a knot exterior.
In such framework, we start with adopting the definition of
the Alexander polynomial as the order of the first homology group of 
a knot exterior,
whose coefficient of the Laurent polynomial ring $\Z[t,t^{-1}]$.
We deduce Fox's formula from the Alexander polynomial of 
lift $\widehat{K}$ of a knot $K$ in a cyclic branched cover $\Sigma_n$.
We can also define the Alexander polynomial of $\widehat{K}$ as the order of 
the first homology group of the knot exterior with coefficient $\Z[t,t^{-1}]$.
The knot exterior of $\widehat{K}$ is the cyclic cover over the knot exterior of the given knot $K$.
It is known that the following relation holds
between those orders of homology groups with the coefficient $\Z[t, t^{-1}]$
(see~\cite[Theorem~$1.9.2$]{Turaev86:RtorsionKnotTheory}):
$$
\Delta_{\widehat{K}} (t) 
=\prod_{\ell=0}^{n-1} \Delta_K(e^{\frac{2\pi\sqrt{-1} \ell}{n}} t)
$$
where $\Delta_{\widehat{K}}(t)$  is the Alexander polynomial of the knot in $\Sigma_n$.
Evaluating at $t=1$,
the orders $\Delta_{\widehat{K}}(t)$ and $\Delta_K(t)$ of the first homology groups for the knot exteriors
turn into the orders of $H_1(\Sigma_n;\Z)$ and $H_1(S^3;\Z)$.
In the above situation, the Alexander polynomials and 
the order of $H_1(\Sigma_n;\Z)$
can be regarded as Reidemeister torsions
(we refer to~\cite{Turaev86:RtorsionKnotTheory, Porti:MayberryMurasugi}).
To be more precise, the order of $H_1(\Sigma_n;\Z)$ is defined as the Reidemeister torsion by
the trivial $\GL_1(\C)$-representation of $\pi_1(\Sigma_n)$.

The purpose of this paper is to provide an extension of the framework for Fox's formula 
by changing the trivial $\GL_1(\C)$-representation in the Reidemeister torsion of $\Sigma_2$
to non--trivial $\GL_1(\C)$-representations.
We will discuss our extension for double branched covers $\Sigma_2$.
Since Fox's formula shows that every double branched cover is a rational homology sphere,
our main concern is double branched covers $\Sigma_2$ with  non--trivial $H_1(\Sigma_2;\Z)$.
It is natural to begin our extension with the twisted Alexander polynomial 
as a twisted topological invariant corresponding to the Alexander polynomial.
Here a twisted topological invariant means a topological invariant defined by 
linear representations of fundamental groups.

Our extension consists of three steps:
First we need to find homomorphisms of the knot group,
corresponding to a non--trivial $\GL_1(\C)$-representation of $\pi_1(\Sigma_2)$. 
From the author's previous work~\cite{NagasatoYamaguchi},
we can choose irreducible {\it metabelian} representation of the knot group into $\SL$
as corresponding homomorphisms.
In the second step, we have to express the twisted Alexander polynomial of 
the knot in $\Sigma_2$.
We can use the similar formula to the Alexander polynomial,
according to~\cite{DuboisYamaguchi:abelianCovering}.
Last, we must consider an appropriate evaluation of the twisted Alexander polynomial of the knot in $\Sigma_2$.
Actually, in the evaluation, we need an additional term to correct the evaluation of the twisted Alexander polynomial.
Roughly speaking, the special value of twisted Alexander polynomial gives a Reidemeister torsion
defined by some cotangent vector of the character variety
but this cotangent vector does not coincides with the desired framing to give 
the Reidemeister torsion for $\Sigma_2$ and $\xi$. 
To resolve such difference between cotangent vectors on the character variety, 
we need to a rational function, which measures the ratio of two cotangent vectors, on the character variety.
As a result, we will obtain the following equality
(for precise statement, we refer to~Theorem~\ref{thm:main_theorem})
to compute the Reidemeister torsion for
$\Sigma_2$ and a non--trivial $\GL_1(\C)$-representation $\xi$:
$$
|\Tor{\Sigma_2}{\C_{\xi}}|^2
= |\,
P(1)^2 \cdot
F([\rho'])\,
|,
\quad
P(1)=
\lim_{t \to 1}
\frac{
  \twistedAlex[\sqrt{-1} t]{\knotexterior}{\rho}
}{
  t^2-1
}
$$
where
$\twistedAlex{\knotexterior}{\rho}$ denotes the twisted Alexander polynomial of $K$
for an irreducible $\SL$-representation of $\knotgroup$ corresponding to $\xi$,
which is called {\it metabelian},
and $F$ denotes a rational function on the $\SL$-character variety 
and $F([\rho'])$ is the value of $F$ at the conjugacy class of
another irreducible metabelian representation of 
$\knotgroup$ associated with $\rho$ by the adjoint action on the Lie algebra $\sll$.
This theorem allows us to compute the Reidemeister torsion for $\Sigma_2$ and $\xi$ by 
the knot group and $\SL$-representations.

In the last section, we will see an application of our main theorem, which shows that 
we can distinguish two--bridge knots by the twisted Alexander polynomial and 
the rational functions $F$ on the character varieties,
up to mirror images.
This is due to the two facts:
\begin{enumerate}
\item
  We have the one--to--one correspondence between 
  isotopy classes of two--bridge knots and homeomorphism classes of 
  double branched covers, \ie lens spaces.
\item We can derive the condition to classify homeomorphism classes of lens spaces
  from the values of Reidemeister torsion of lens spaces as in our main theorem.
\end{enumerate}
We will also discuss how to compute the rational functions $F$ on character varieties
for two--bridge knots.

Our method also works for knots in integral homology three spheres.
For the simplicity, we will focus on knots in $S^3$.

\medskip

This paper is organized as follows.
In Section~\ref{sec:preliminaries},
we review the Reidemeister torsion associated to twisted chain complexes with 
nontrivial homology groups and properties of character varieties,
which are needed throughout our observation.
We discuss twisted chain complexes and the associated Reidemeister torsion in details  
and show our main theorem  
which gives a connection between the twisted Alexander polynomial of a knot
and the Reidemeister torsion of double branched cover along the knot 
under some conditions
in Section~\ref{sec:main_results}.
Last, Section~\ref{sec:application} reveals that all two--bridge knots 
satisfy the conditions required in our main theorem and 
we can obtain the numerical invariant which classifies 
two--bridge knots up to mirror images.

\section{Preliminaries}
\label{sec:preliminaries}
\subsection{Torsion for chain complexes}
{\it Torsion} is an invariant defined for a based chain complexes.
We denote by $C_*$ the {\it based} chain complex:
$$
C_*: 0 \to C_n \xrightarrow{\bnd{n}}
C_{n-1} \xrightarrow{\bnd{n-1}}
\cdots \xrightarrow{\bnd{2}}
C_1 \xrightarrow{\bnd{1}}
C_0 \to 0
$$
where each chain module $C_i$ is a vector space over a field $\F$
and equipped with a basis $\basisM$. 
The chain complex $C_*$ also has a basis determined by the 
boundary operators $\bnd{i}$,
which arises from the following decomposition of chain modules.
Roughly speaking, torsion provides a property of a chain complex
in the difference between a given basis and new one determined by the boundary operators.

For each boundary operator $\bnd{i}$,
let $Z_i \subset C_i$ denote the kernel and
$B_i \subset C_{i-1}$ the image by $\bnd{i}$.
The chain module $C_i$ is expressed as the direct sum of $Z_i$ and the lift of $B_i$,
denoted by $\lift{B}_i$.
Moreover we can decompose the kernel $Z_i$ into
the direct sum of $B_{i+1}$ and the lift $\lift{H}_i$ of homology group $H_i(C_*)$:
\begin{align*}
  C_i
  &= Z_i \oplus \lift{B}_i \\
  &= \bnd{i+1}\lift{B}_{i+1} \oplus \lift{H}_i \oplus \lift{B}_i 
\end{align*}
where $B_{i+1}$ is written as $\bnd{i+1}\lift{B}_{i+1}$.

We denote by $\basisBt$ a basis of $\lift{B}_{i+1}$.
Choosing a basis $\basisH$ of the $i$-th homology group $H_i (C_*)$,
we can take a lift $\basisHt$ of $\basisH$ in $C_i$.
Then the set $\bnd{i+1}(\basisBt[i+1]) \cup \basisHt \cup \basisBt$
forms a new basis of the vector space $C_i$.
We define the {\it torsion} of $C_*$ as
the following alternating product of determinants of base change matrices:
\begin{equation}
  \label{eqn:def_torsion_complex}
  \mathrm{Tor}(C_*, \basisM[*], \basisH[*])
  = \prod_{i \geq 0} 
  \left[
    \bnd{i+1}(\basisBt[i+1]) \cup \basisHt \cup \basisBt / \basisM
    \right]^{(-1)^{i+1}}
  \in \F^* = \F \setminus \{0\}
\end{equation}
where $[ \bnd{i+1}(\basisBt[i+1]) \cup \basisHt \cup \basisBt / \basisM ]$
denotes the determinant of base change matrix from
$\basisM$ to $\bnd{i+1}(\basisBt[i+1]) \cup \basisHt \cup \basisBt$.

Note that the right hand side is independent of the choice of bases $\basisBt$ and the lift of $\basisH$.
The alternating product in~\eqref{eqn:def_torsion_complex} is determinant by the based chain complex
$(C_*, \basisM[*])$ and the basis $\basisH[*] = \cup_{i \geq 0} \basisH$.

\subsection{Reidemeister torsion for CW--complexes}
We will consider torsion of {\it twisted chain complexes} given by 
a CW--complex and a representation of its fundamental group in this paper. 
Let $W$ denote a finite CW--complex and $(V, \rho)$ a representation of $\pi_1(W)$,
which means $V$ is a vector space over $\C$ and $\rho$ is a homomorphism from $\pi_1(W)$ into $\GL(V)$
and is referred as a $\GL(V)$-representation $\rho$.
\begin{definition}
  We define the twisted chain complex $C_*(W; V_\rho)$ which consists of the twisted chain module as:
  $$
  C_i (W; V_\rho) :=V \otimes_{\Z[\pi_1(W)]} C_i (\univcover{W};\Z)
  $$
  where $\univcover{W}$ is the universal cover of $W$ and $C_i(\lift{W};\Z)$
  is a left $\Z[\pi_1(W)]$-module given by the covering transformation of $\pi_1(W)$.
  In taking the tensor product, we regard $V$ as a right $\Z[\pi_1]$-module under the homomorphism $\rho^{-1}$.
  We identify a chain $\bm{v} \otimes \gamma c$ with $\rho(\gamma)^{-1}(\bm{v}) \otimes c$
  in $C_i (W; V_\rho)$.
\end{definition}
We call $C_*(W;V_\rho)$ the twisted chain complex with the coefficient $V_\rho$ and denote by $H_*(W;V_\rho)$
the homology group, which is called {\it the twisted homology group}.
We will drop the subscript $\rho$ for simplicity when there exists no risk of ambiguity.

Choosing a basis of the vector space $V$, we give a basis of the twisted chain complex $C_*(W; V_\rho)$.
To be more precise, let $\{e^{i}_1, \ldots, e^{i}_{m_i}\}$ be the set of $i$-dimensional cells of $W$ and 
$\{\bm{v}_1, \ldots, \bm{v}_{d}\}$ a basis of $V$ where $d = \dim_{\F} V$. 
Choosing a lift $\lift{e}^{i}_j$ of each cell and taking tensor product with the basis of $V$,
we have the following basis of $C_i(W;V_\rho)$:
$$
\basisM(W;V)=
\{
\bm{v}_1 \otimes \lift{e}^{i}_1, \ldots, \bm{v}_d \otimes \lift{e}^{i}_1,
\ldots,
\bm{v}_1 \otimes \lift{e}^{i}_{m_i}, \ldots, \bm{v}_d \otimes \lift{e}^{i}_{m_i}
\}.
$$

To define the Reidemeister torsion for $W$ and $(V, \rho)$, we require that $V$ has a inner product and 
the basis $\{\bm{v}_1, \ldots, \bm{v}_{d}\}$ is orthonormal.
Regarding $C_*(W; V_\rho)$ as a based chain complex,
we define the Reidemeister torsion for $W$ and $(V, \rho)$
as the torsion of $C_*(W;V_\rho)$ and a basis $\basisH[*]$ of $H_*(W;V_\rho)$, \ie
$$
\mathrm{Tor}(W; V_\rho, \basisH[*] ) = \mathrm{Tor}(C_*(W;V_\rho), \basisM[*](W;V), \basisH[*])
\in \F^* 
$$
up to a factor in $\{\pm \det(\rho(\gamma)) \,|\, \gamma \in \pi_1(W)\}$
since we have many choices of lifts $\lift{e}^{i}_j$
and orders and orientations of cells $e^{i}_j$.

\begin{remark}
  We remark how to avoid indeterminacy of torsion $\mathrm{Tor}(W; V_\rho, \basisH[*])$.
  \begin{itemize}
  \item 
    If the Euler characteristic $\chi(W)$ of a CW--complex $W$ is zero, 
    we can drop the assumption that $\{\bm{v}_1, \ldots, \bm{v}_{d}\}$ is orthonormal. 
    This follows from that the torsion  
    defined by another basis $\{\bm{u}_1, \ldots, \bm{u}_{d}\}$
    is expressed as the product of the torsion $\mathrm{Tor}(C_*(W;V_\rho), \basisM[*](W;V), \basisH[*])$
    defined by $\{\bm{v}_1, \ldots, \bm{v}_{d}\}$
    with the following factor:
    $$
    [ \{\bm{u}_1, \ldots, \bm{u}_{d}\} / \{\bm{v}_1, \ldots, \bm{v}_{d}\} ]^{- \chi(W)}.
    $$
  \item If we choose an $\mathrm{SL}(V)$-representation $\rho$, then
    $\mathrm{Tor}(W; V_\rho, \basisH[*])$ is determined up to a sign.
  \end{itemize}
\end{remark}
It is also worth noting that 
the Reidemeister torsion has an invariance under the conjugation of representations $\rho$.
We often observe the Reidemeister torsion after choosing a representation in the conjugacy class
of a given representation.
When bases $\basisM[*](W;V)$ and $\basisH[*]$ are clear from the context, 
we abbreviate the notation $\mathrm{Tor}(C_*, \basisM[*], \basisH[*])$ to $\TorCpx{C_*}$
and $\mathrm{Tor}(W; V_\rho, \basisH[*])$ to $\Tor{W}{V_\rho}$.
For more details on Reidemeister torsion, we refer to Turaev's book~\cite{Turaev:2000}
and Milnor's survey~\cite{Milnor:1966}.

\subsection{Non--abelian Reidemeister torsion and Twisted Alexander polynomial}
We will mainly observe the Reidemeister torsion of knot exteriors and 
the twisted chain complexes with the coefficient $\sll$.
Let $\knotexterior$ denote the knot exterior $S^3 \setminus N(K)$ where $K$ is a knot in $S^3$
and $N(K)$ is an open tubular neighbourhood.
From now on, we use the symbol $\rho$ for an $\SL$-representation of a knot group $\knotgroup$.
Taking the composition with the adjoint action of $\SL$ on the Lie algebra $\sll$, 
we have a representation $(\sll, Ad \circ \rho)$ of $\knotgroup$ as follows:
\begin{align}
  Ad \circ \rho \co \knotgroup  &\to \SL \to {\rm Aut}(\sll) \label{eqn:def_adjoint_rep}\\
  \gamma  &\mapsto \, \rho(\gamma) \, \mapsto
  Ad_{\rho(\gamma)} \co \bm{v} \mapsto \rho(\gamma)\, \bm{v}\, \rho(\gamma)^{-1}. \notag
\end{align}
It is called the {\it adjoint representation} of $\rho$.
We regard the vector space $\sll$, consisting of trace--free $2 \times 2$-matrices,
as the right $\Z[\knotgroup]$-module via the action $Ad \circ \rho^{-1}$.
The following basis will be referred as the standard basis of $3$-dimensional vector space $\sll$:
\begin{equation}
  \label{eqn:standard_basis_sll}
  \left\{
  E=\matrixE, H=\matrixH, F=\matrixF
  \right\}.
\end{equation}
Note that one can show that $Ad_A$ has the eigenvalues $z^{\pm 2}$ and $1$
when an $\SL$-element $A$ has the eigenvalues $z^{\pm 1}$.
Hence $Ad \circ \rho$ gives an $\SL[3]$-representation of $\knotgroup$.

We will observe the torsion of the twisted chain complex $C_*(\knotexterior;\sll)$
defined by the composition $Ad \circ \rho$.
Since the twisted homology group $H_*(\knotexterior;\sll)$ is non--trivial, 
we need to choose a basis of $H_*(\knotexterior;\sll)$
to define the torsion of $C_*(\knotexterior;\sll)$.
The twisted homology group $H_*(\knotexterior;\sll)$ depends on the choice of 
$\SL$-representations $\rho$.
We follow notion~\cite{JDFourier, YY1} of $\SL$-representations 
concerning a basis of $H_*(\knotexterior;\sll)$, 
which was introduced by J.~Porti~\cite{Porti:1997}.

First, we recall the notion of {\it irreducible} and {\it reducible} representations.
When there exists a proper invariant line in $\C^2$ under the action of 
the image $\rho(\knotgroup)$,
we say that $\rho$ is {\it reducible}.
If an $\SL$-representation $\rho$ is not reducible, then $\rho$ is referred to as being {\it irreducible}.
We are not concerned with reducible $\SL$-representation of $\knotgroup$.
\begin{definition}
  \label{def:regularity_rep}
  Let $\gamma$ be a closed curve on $\boundaryTorus$.
  An $\SL$-representation $\rho$ of $\knotgroup$ is $\gamma$-regular if
  $\rho$ is irreducible and satisfies the following conditions:
  \begin{enumerate}
  \item \label{item:dim_regularity}
    $\dim_\C H_1(\knotexterior;\sll) = 1$;
  \item \label{item:generator_regularity}
    the inclusion map from $\gamma$ into $\knotexterior$
    induces the surjective homomorphism from $H_1(\gamma;\sll)$ onto $H_1(\knotexterior;\sll)$,
    where $H_1(\gamma;\sll)$ is the homology group of twisted chain complex for $\gamma$
    and the restriction of $\rho$ on the subgroup $\langle \gamma \rangle \subset \knotgroup$;
  \item \label{item:parabolic_regularity}
    if $\trace \rho(\gamma') = \pm 2$ for all $\gamma' \in \pi_1(\boundaryTorus)$, then
    $\rho(\gamma) \not = \pm \I$.
  \end{enumerate}
\end{definition}

\begin{lemma}[Lemma~$2.6.5$ in~\cite{YY1}]
  Under the assumption of Definition~\ref{def:regularity_rep}, 
  if $\rho$ is $\gamma$-regular,
  then the induced homomorphism from $H_2(\boundaryTorus;\sll)$ to $H_2(\knotexterior;\sll)$
  is an isomorphism, where $H_2(\boundaryTorus;\sll)$ is the homology group of twisted chain complex
  for $\boundaryTorus$ and the restriction $\rho$ on $\pi_1(\boundaryTorus)$.
\end{lemma}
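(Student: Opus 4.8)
The plan is to reduce the claim to a dimension count together with an injectivity statement: I will show that both $H_2(\boundaryTorus; \sll)$ and $H_2(\knotexterior; \sll)$ are one--dimensional over $\C$ and that the inclusion--induced map is injective. Since an injective linear map between vector spaces of the same finite dimension is an isomorphism, this suffices. Throughout I use that $\sll$ carries the $Ad$--invariant non--degenerate form $\langle X, Y\rangle = \trace(XY)$, so that $(\sll, Ad \circ \rho)$ is isomorphic to its own dual local system; this makes twisted Poincar\'e and Lefschetz duality available with $\sll$--coefficients.

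First I would compute $\dim_\C H_2(\boundaryTorus; \sll)$. Because $\rho$ is irreducible and the meridian $\mu$ normally generates $\knotgroup$, one has $\rho(\mu) \neq \pm \I$; hence the restriction of $\rho$ to the abelian group $\pi_1(\boundaryTorus)$ is non--central. Poincar\'e duality on the closed oriented surface $\boundaryTorus$, combined with self--duality of $\sll$, gives $H_2(\boundaryTorus; \sll) \cong H^0(\boundaryTorus; \sll) = \sll^{\,\pi_1(\boundaryTorus)}$, the space of $Ad$--invariants. A non--central abelian subgroup of $\SL$ is, up to conjugacy, either contained in the diagonal torus or in the unipotent subgroup; in either case a direct calculation shows the $Ad$--invariants form the one--dimensional Cartan or nilpotent line, respectively. (The boundary--parabolic case is exactly where condition~(\ref{item:parabolic_regularity}) secures $\rho(\gamma) \neq \pm\I$, reinforcing non--centrality.) Thus $\dim_\C H_2(\boundaryTorus; \sll) = 1$.

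Next I would pin down $\dim_\C H_2(\knotexterior; \sll)$. Since $\chi(\knotexterior) = 0$, the twisted Euler characteristic $\chi(\knotexterior; \sll)$ vanishes. Irreducibility forces $H_0(\knotexterior; \sll) = 0$ (equivalently $\sll^{\,\knotgroup} = 0$, as an $Ad$--invariant vector is a trace--free matrix commuting with an irreducible image, hence $0$), and $H_3(\knotexterior; \sll) = 0$ because $\knotexterior$ collapses to a $2$--complex. Hence $\dim H_1 = \dim H_2$, and condition~(\ref{item:dim_regularity}) gives $\dim_\C H_1(\knotexterior; \sll) = 1$, so $\dim_\C H_2(\knotexterior; \sll) = 1$ as well.

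Finally I would prove injectivity of $\iota_* \co H_2(\boundaryTorus;\sll) \to H_2(\knotexterior;\sll)$ using the long exact sequence of the pair $(\knotexterior, \boundaryTorus)$. Lefschetz duality together with self--duality of $\sll$ identifies $H_3(\knotexterior, \boundaryTorus; \sll) \cong H^0(\knotexterior; \sll) = \sll^{\,\knotgroup} = 0$. The relevant segment of the sequence is therefore
$$
0 = H_3(\knotexterior, \boundaryTorus; \sll) \to H_2(\boundaryTorus; \sll) \xrightarrow{\iota_*} H_2(\knotexterior; \sll),
$$
so $\iota_*$ is injective, and by the equality of dimensions it is an isomorphism. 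I expect the main obstacle to be the boundary computation in the first step: one must correctly invoke twisted Poincar\'e duality for the self--dual system $(\sll, Ad\circ\rho)$ and treat the diagonalizable and parabolic peripheral cases uniformly, which is precisely where the regularity hypotheses enter. Notably, only irreducibility and condition~(\ref{item:dim_regularity}) seem to be used essentially here; condition~(\ref{item:generator_regularity}) is not needed for this isomorphism.
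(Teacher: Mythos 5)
Your proof is correct, and it is the standard argument for this fact: the paper itself does not prove the lemma but only cites Lemma~$2.6.5$ of~\cite{YY1}, where the result is likewise obtained from Poincar\'e--Lefschetz duality for the self--dual local system $\sll$, the vanishing of $H_0$ and $H^0$ by irreducibility, and the Euler characteristic count $\chi(\knotexterior)=0$ together with condition~\pref{item:dim_regularity}. Your closing observation is also accurate: only irreducibility and $\dim_\C H_1(\knotexterior;\sll)=1$ are used, not condition~\pref{item:generator_regularity}.
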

For an explicit basis of $H_*(\boundaryTorus;\sll)$,
see~\cite[Lemma~$2.6.2$]{YY1} and \cite[Proposition~3.18]{Porti:1997}.
\begin{proposition}
  Let $\gamma$ be a closed loop on $\boundaryTorus$,
  If $\rho$ is $\gamma$-regular, then we can choose 
  the following basis of the twisted homology group $H_*(\knotexterior;\sll)$:
  \begin{equation}
    \label{eqn:basis_gamma_reg}
    H_*(\knotexterior;\sll) =
    \begin{cases}
      [P^\rho \otimes T^2] & (*=2) \\
      [P^\rho \otimes \gamma] & (*=1) \\
      \bm{0} & ({\rm otherwise})
    \end{cases}
  \end{equation}
  where we use the same notation for the lifts of chains to the universal cover
  $\univcover{\knotexterior}$ for the simplicity.
\end{proposition}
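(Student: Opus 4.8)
The plan is to determine first in which degrees the twisted homology is supported and what its dimensions are, and then to identify the explicit generators using the $\gamma$-regularity hypotheses together with the preceding lemma.

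First I would observe that $\knotexterior$, being a compact $3$--manifold with nonempty boundary, is homotopy equivalent to a $2$--dimensional CW--complex, so $H_i(\knotexterior;\sll)=0$ for all $i \geq 3$. For degree $0$, recall that $H_0(\knotexterior;\sll)$ is the module of coinvariants of $\sll$ under $Ad \circ \rho$. Since $\rho$ is irreducible, any $Ad \circ \rho$--invariant element of $\sll$ commutes with the whole image $\rho(\knotgroup)$, hence is scalar by Schur's lemma; being trace--free it vanishes. As the Killing form identifies $\sll$ with its dual as a $\knotgroup$--module, the coinvariants also vanish, so $H_0(\knotexterior;\sll)=0$. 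Condition~\pref{item:dim_regularity} of $\gamma$--regularity gives $\dim_\C H_1(\knotexterior;\sll)=1$. To pin down $H_2$ I would use the Euler characteristic: since $\chi(\knotexterior)=0$, the alternating sum of the dimensions of $C_*(\knotexterior;\sll)$ equals $\dim_\C(\sll)\cdot\chi(\knotexterior)=0$, so the alternating sum of the Betti numbers vanishes; combined with $H_0=H_{\geq 3}=0$ and $\dim H_1=1$ this forces $\dim_\C H_2(\knotexterior;\sll)=1$. Thus the homology is concentrated in degrees $1$ and $2$, each one--dimensional, as asserted.

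Next I would exhibit the generators. For degree $2$, the preceding lemma asserts that the inclusion induces an isomorphism $H_2(\boundaryTorus;\sll)\xrightarrow{\cong}H_2(\knotexterior;\sll)$. By the explicit description of $H_*(\boundaryTorus;\sll)$ recalled from~\cite[Lemma~$2.6.2$]{YY1} and~\cite[Proposition~$3.18$]{Porti:1997}, the group $H_2(\boundaryTorus;\sll)$ is spanned by $[P^\rho\otimes T^2]$, where $P^\rho$ spans the invariant line of $\sll$ under the adjoint action of the peripheral subgroup and $T^2$ is the fundamental class of the torus; pushing this class forward yields the asserted generator $[P^\rho\otimes T^2]$ of $H_2(\knotexterior;\sll)$. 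For degree $1$, condition~\pref{item:generator_regularity} supplies a surjection $H_1(\gamma;\sll)\twoheadrightarrow H_1(\knotexterior;\sll)$. Since $\gamma$ is a circle, its twisted chain complex reads $0\to\sll\xrightarrow{Ad_{\rho(\gamma)}-\I}\sll\to 0$, so $H_1(\gamma;\sll)=\ker(Ad_{\rho(\gamma)}-\I)$ is the invariant subspace of $Ad_{\rho(\gamma)}$. As $P^\rho$ lies on the peripheral invariant line it is in particular fixed by $Ad_{\rho(\gamma)}$, so $P^\rho\otimes\gamma$ is a cycle; surjectivity onto the one--dimensional group $H_1(\knotexterior;\sll)$ then shows that $[P^\rho\otimes\gamma]$ is nonzero and hence a basis.

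The step requiring the most care is verifying that $H_1(\gamma;\sll)$ is itself one--dimensional and spanned by $P^\rho\otimes\gamma$, so that the surjection genuinely transports this single class to a generator. This is where condition~\pref{item:parabolic_regularity} enters, and I would split into cases by $\trace\rho(\gamma)$. If $\trace\rho(\gamma)\neq\pm 2$, then $\rho(\gamma)$ is diagonalizable with eigenvalues $z^{\pm 1}$, $z^2\neq 1$, so $Ad_{\rho(\gamma)}$ has eigenvalues $z^{\pm 2},1$ and a one--dimensional fixed space. If $\trace\rho(\gamma)=\pm 2$, then condition~\pref{item:parabolic_regularity} guarantees $\rho(\gamma)\neq\pm\I$, so $\rho(\gamma)$ is a nontrivial parabolic and $Ad_{\rho(\gamma)}$ is a single size--three Jordan block with eigenvalue $1$, again with a one--dimensional fixed space. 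In either case $\ker(Ad_{\rho(\gamma)}-\I)=\C\,P^\rho$, confirming that $P^\rho\otimes\gamma$ generates $H_1(\gamma;\sll)$ and completing the identification of the bases.
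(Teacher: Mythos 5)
The paper itself offers no proof of this proposition (it is quoted from \cite{YY1} and \cite{Porti:1997}), so I can only judge your argument on its own terms. Your overall strategy --- kill $H_0$ and $H_{\geq 3}$ by irreducibility and the $2$--dimensionality of the spine, get $\dim H_1 = \dim H_2 = 1$ from condition \pref{item:dim_regularity} and the vanishing Euler characteristic, then transport generators from the boundary using the preceding lemma and condition \pref{item:generator_regularity} --- is the right one and matches what the cited sources do.

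There is, however, one genuine gap, in the final paragraph. You claim that when $\trace\rho(\gamma)=\pm 2$, condition \pref{item:parabolic_regularity} forces $\rho(\gamma)\neq\pm\I$. That is a misreading: condition \pref{item:parabolic_regularity} only applies when \emph{every} peripheral element $\gamma'$ has $\trace\rho(\gamma')=\pm 2$. It therefore does not exclude the case $\rho(\gamma)=\pm\I$ when some other peripheral element has trace $\neq\pm 2$ --- and this is precisely the case the paper needs: for an irreducible metabelian representation one has $\rho(\lambda)=\I$ and $\trace\rho(\mu)=0$, and the main theorem requires $\lambda$-regularity of such representations. In that situation $H_1(\gamma;\sll)=\ker(Ad_{\rho(\gamma)}-\I)=\sll$ is $3$--dimensional, so your dichotomy ``in either case $\ker(Ad_{\rho(\gamma)}-\I)=\C P^\rho$'' fails, and surjectivity of $H_1(\gamma;\sll)\to H_1(\knotexterior;\sll)$ alone only tells you that \emph{some} class $[v\otimes\gamma]$ generates, not that $[P^\rho\otimes\gamma]$ does.

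The conclusion is still true, and the repair is to factor the map through the boundary torus: $H_1(\gamma;\sll)\to H_1(\boundaryTorus;\sll)\to H_1(\knotexterior;\sll)$. Choose $\delta$ so that $\gamma,\delta$ generate $\pi_1(\boundaryTorus)$; since $\rho(\gamma)=\pm\I$ and not all peripheral traces are $\pm 2$ (condition \pref{item:parabolic_regularity} rules that out), $\rho(\delta)$ is semisimple and non--central, so $\im(1-Ad_{\rho(\delta)}^{-1})$ is a $2$--dimensional complement of $\C P^\rho$. The boundary $\partial(v\otimes T^2)=\pm(1-Ad_{\rho(\delta)}^{-1})v\otimes\gamma$ then shows that every class $[v\otimes\gamma]$ in $H_1(\boundaryTorus;\sll)$ equals $[v'\otimes\gamma]$ with $v'\in\C P^\rho$, so the image of $H_1(\gamma;\sll)$ in $H_1(\boundaryTorus;\sll)$, hence in $H_1(\knotexterior;\sll)$, is spanned by $[P^\rho\otimes\gamma]$ after all; surjectivity and $\dim H_1(\knotexterior;\sll)=1$ finish the argument. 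With that case added your proof is complete.
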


Let $\mu$ and $\lambda$ denote a meridian and
a corresponding preferred longitude on $\boundaryTorus$.
We will focus on $\mu$-regular and $\lambda$-regular representation of $\knotgroup$.
We refer to~\cite[Section~$2.6$]{YY1} as an exposition.
\begin{definition}
  We assume that $\rho$ is $\gamma$-regular for a closed loop on $\boundaryTorus$.
  Then we will consider the Reidemeister torsion for $\knotexterior$ and the representation
  $(\sll, Ad \circ \rho^{-1})$
  with the basis as in Eq.~\eqref{eqn:basis_gamma_reg} and write it simply $\Tor{\knotexterior}{\sll}$.
\end{definition}

We also review the twisted Alexander polynomial in the context of Reidemeister torsion.
Let $\C(t)$ be the rational functional field with a variable $t$.
We denote by $\alpha$ the abelianization homomorphism of $\knotgroup$, \ie
$$
\alpha \co \knotgroup \to H_1(\knotexterior;\Z) = \langle t \rangle,
$$
which sends $\mu$ to $t$.

For an $\SL$-representation $\rho$ of $\knotgroup$, 
the tensor product $\alpha \otimes Ad \circ \rho^{-1}$ gives an action of
$\knotgroup$ on the vector space $\C(t) \otimes_{\C} \sll$
over the rational function field $\C(t)$.
Extending the action defined by $\alpha \otimes Ad \circ \rho^{-1}$
to the group ring $\Z[\knotgroup]$ linearly,
we can construct the following chain complex:
$$
C_*(\knotexterior;\C(t) \otimes \sll) =
(\C(t) \otimes \sll) \otimes_{\Z[\knotgroup]} C_*(\univcover{\knotexterior};\Z).
$$

If the chain complex $C_*(\knotexterior;\C(t) \otimes \sll)$ is acyclic,
then the torsion $\TorCpx{C_*(\knotexterior;\C(t) \otimes \sll)}$ is defined as 
en element in $\C(t) \setminus \{0\}$ up to a factor $\pm t^k$ ($k \in \Z$).
According to the observation by~\cite[Section~$4$]{KL}, this torsion
$\TorCpx{C_*(\knotexterior;\C(t) \otimes \sll)}$ can be regarded as
the twisted Alexander polynomial of $K$ and $Ad \circ \rho$,
defined by X-S.~Lin~\cite{Lin01} and M.~Wada~\cite{Wada94}.
\begin{definition}
  Suppose that the chain complex $C_*(\knotexterior;\C(t) \otimes \sll)$ is acyclic.
  Then we call $\TorCpx{C_*(\knotexterior;\C(t) \otimes \sll)}$
  the twisted Alexander polynomial for $K$ and $Ad \circ \rho$ and denote it by
  $\twistedAlexAd{\knotexterior}{\rho}$.
\end{definition}  
\begin{remark}
  By the definition due to Wada, we can compute $\twistedAlexAd{\knotexterior}{\rho}$
  by Fox differential calculus with a presentation of $\knotgroup$.
  The twisted Alexander polynomial is usually easy to compute, compared to
  the Reidemeister torsion $\Tor{\knotexterior}{\sll}$.
\end{remark}

\subsection{Character varieties}
We review the $\SL$-character varieties of knot groups briefly.
It is known that the twisted homology group $H_1(\knotexterior;\sll)$ is isomorphic to
the Zariski cotangent space of the character variety of $\knotgroup$ if $\rho$ is irreducible
and satisfies the conditions~\eqref{item:dim_regularity} \&~\eqref{item:parabolic_regularity}
in Definition~\ref{def:regularity_rep}.
When we consider a base change in $H_1(\knotexterior;\sll)$, 
it is helpful to use the geometric properties and regular functions on the character variety.
We start with the definition of character varieties as sets.
\begin{definition}
  We denote by $\repsp{\knotgroup}$ the set of $\SL$-representations of a knot group $\knotgroup$.
  Choosing an $\SL$-representation $\rho$, we call the following map $\chi_\rho$ the {\it character} of $\rho$:
  \begin{align*}
    \chi_\rho \co \knotgroup &\to \C \\
    \gamma &\mapsto \trace \rho(\gamma).
  \end{align*}
  The set $\{\chi_\rho \,|\, \rho \in \repsp{\knotgroup}\}$ of characters is denoted by $X(E_K)$.
\end{definition}
According to M.~Culler and P.~Shalen~\cite{CS:1983}, 
the set $X(\knotexterior)$ consists of several components which have structures of affine variety.
We call $X(\knotexterior)$ the {\it character variety} of $\knotgroup$.
The character variety is referred as the algebraic--quotient of the representation variety $\repsp{\knotgroup}$
under the action of $\SL$ by conjugation.

We mainly deal with irreducible $\SL$-representations and their characters. 
We will write $X^{\rm irr}(\knotexterior)$ for
the subset in $X(\knotexterior)$, consisting of irreducible characters,
which means the characters of irreducible $\SL$-representations of $\knotgroup$.

\begin{remark}
  By~\cite[Proposition~$1.5.2$]{CS:1983}, we can regard $X^{\rm irr}(\knotexterior)$ as
  the set of conjugacy classes of irreducible $\SL$-representations of $\knotgroup$.
  This is due to~\cite[Proposition~$1.5.2$]{CS:1983} which means that
  if irreducible representations $\rho$ and $\rho'$ give the same character,
  then they are conjugate to each other.
  We can think of $X^{\rm irr}(\knotexterior)$
  as a parameter space of Reidemeister torsion 
  for irreducible $\SL$-representations of $\knotgroup$.
\end{remark}

In general,
we have the following inequalities of dimensions related to the character variety $X(\knotexterior)$:
$$
\dim_\C X(\knotexterior)
\leq \dim_{\C} {T^{Zar}_{\chi_\rho} X(\knotexterior)}
\leq \dim_{\C} H^1(\knotexterior;\sll) (= \dim_{\C} H_1(\knotexterior;\sll))
$$
where $T^{Zar}_{\chi_\rho} X(\knotexterior)$ is the Zariski tangent space at $\chi_\rho$
(we refer to~\cite[Section~$3.1.3$]{Porti:1997}) and 
the last equality is due to the universal coefficient theorem.
The first equality holds when $\chi_\rho$ is a smooth point 
and second equality holds when $\rho$ is irreducible.
\begin{remark}
  According to~\cite[Proposition~$3.5$]{Porti:1997},
  if $\rho$ is irreducible, then
  we can identify $H_1(\knotexterior;\sll)$ with
  the dual space of Zariski tangent space $T^{Zar}_{\chi_\rho} X(\knotexterior)$.
\end{remark}

The structure of $X(\knotexterior)$ as affine variety arises from functions
defined as the evaluation for each $\gamma \in \knotgroup$:
\begin{align}
  \label{eqn:def_trace_function}
  X(\knotexterior) &\to  \C \\
  \chi_\rho &\mapsto \chi_\rho(\gamma)=\trace \rho (\gamma). \notag
\end{align}
The appropriate set of $N$ elements in $\knotgroup$ forms an embedding to the affine space $\C^N$
and its image turns into a closed algebraic set,
which gives a structure of affine variety to $X(\knotexterior)$.
\begin{definition}
  We will write $I_\gamma$ for the function in Eq.~\eqref{eqn:def_trace_function} and
  call it the {\it trace function} of $\gamma \in \knotgroup$.
\end{definition}

We also review the symmetry of $\SL$-character varieties under an involution.
We will focus irreducible $\SL$-representations of knot groups,
whose characters are characterized
as the fixed point set by the involution.
Such involution is given by multiplying the following $\GL_1(\C)$-representation
of $\knotgroup$ into $\{\pm 1\}$:
\begin{align*}
  \signrep \co \knotgroup & \to \{\pm 1\} \subset \C \\
  \gamma &\mapsto (-1)^{[\gamma]}
\end{align*}
where $[\gamma]$ is an integer corresponding to 
the homology class of $\gamma$ under the identification
$H_1(\knotexterior;\Z) \ni \mu \mapsto 1 \in \Z$.
For every $\SL$-representation $\rho$ of $\knotgroup$, 
the product $\signrep \cdot \rho$ also gives an $\SL$-representation.
This multiplication gives an involution on $\repsp{\knotgroup}$ and also induces 
an involution on the character variety $X(\knotexterior)$, which 
sends $\chi_\rho$ to $\chi_{\signrep \rho} = \signrep \chi_\rho$, 
since $\signrep$ commutes with conjugation.
We denote by $\hat{\iota}$ this involution. 
The involution $\hat{\iota}$ has the fixed point set on $X^{\rm irr}(\knotexterior)$ unless
the knot determinant $\Delta_K(-1)$ is $\pm 1$.
\begin{proposition}[Proposition~$3$ in~\cite{NagasatoYamaguchi}]
  \label{prop:fixed_points_involution}
  If the knot determinant $\Delta_K(-1)$ is not equal to $\pm 1$, then
  the involution $\hat{\iota}$ on $X^{\rm irr}(\knotexterior)$ has
  finite fixed points consisting of
  all characters of irreducible metabelian representations,
  whose number is equal to $(|\Delta_K(-1)| -1)/2$.
\end{proposition}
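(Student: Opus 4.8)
The plan is to characterize the fixed points of $\hat\iota$ directly and then count them through the classification of irreducible metabelian representations. First I would use irreducibility to convert the fixed-point condition into a conjugacy. A character $\chi_\rho \in X^{\rm irr}(\knotexterior)$ is fixed by $\hat\iota$ precisely when $\chi_{\signrep\rho} = \chi_\rho$; since both $\rho$ and $\signrep\rho$ are irreducible, Proposition~$1.5.2$ of~\cite{CS:1983} shows this is equivalent to the existence of $A \in \GL_2(\C)$ with $A\rho(\gamma)A^{-1} = (-1)^{[\gamma]}\rho(\gamma)$ for all $\gamma \in \knotgroup$. Evaluating on the meridian gives $A\rho(\mu)A^{-1} = -\rho(\mu)$, whence $\trace\rho(\mu)=0$ and $A$ cannot be scalar; evaluating on $[\knotgroup,\knotgroup]$, where $(-1)^{[\gamma]}=1$, shows that $A$ centralizes $\rho([\knotgroup,\knotgroup])$. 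As the centralizer of a non-scalar matrix in $\GL_2(\C)$ is abelian, $\rho([\knotgroup,\knotgroup])$ is abelian, i.e.\ $\rho$ is \emph{metabelian}. Conversely, for an irreducible metabelian $\rho$ I would put $\rho$ in normal form, diagonal on $[\knotgroup,\knotgroup]$ and with $\rho(\mu)$ anti-diagonal of trace $0$, and exhibit $A=\mathrm{diag}(\sqrt{-1},-\sqrt{-1})$, which fixes the diagonal part and negates $\rho(\mu)$. Writing each $\gamma$ as $\mu^{[\gamma]}g$ with $g\in[\knotgroup,\knotgroup]$ then yields $A\rho(\gamma)A^{-1}=(-1)^{[\gamma]}\rho(\gamma)$ on the nose, so $\chi_\rho$ is fixed. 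This identifies the fixed-point set of $\hat\iota$ on $X^{\rm irr}(\knotexterior)$ with the set of characters of irreducible metabelian representations.

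To count them I would parametrize irreducible metabelian representations by characters of $H_1(\Sigma_2;\Z)$. In the normal form above, the diagonal part is governed by a homomorphism $\phi\colon [\knotgroup,\knotgroup]\to\C^*$ via $\rho(g)=\mathrm{diag}(\phi(g),\phi(g)^{-1})$, and the condition that $\rho(\mu)$ be anti-diagonal forces $\phi(\mu g\mu^{-1})=\phi(g)^{-1}$, i.e.\ $\phi((t+1)g)=1$. Hence $\phi$ descends to a character of the Alexander module modulo $(t+1)$, which is exactly $H_1(\Sigma_2;\Z)$; conversely every such $\phi$ produces a representation of the metabelian quotient $\knotgroup/\knotgroup''$, well defined because this quotient is the semidirect product of the Alexander module with $\langle\mu\rangle$ acting by $t$, and the anti-diagonal entry of $\rho(\mu)$ is a free parameter removed by conjugation. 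I would then check that $\rho$ is irreducible iff $\phi$ is non-trivial, and that $\phi$ and $\phi^{-1}$, interchanged by conjugating with the anti-diagonal involution, give the same character, while the traces $\trace\rho(g)=\phi(g)+\phi(g)^{-1}$ recover the unordered pair $\{\phi,\phi^{-1}\}$. Since $H_1(\Sigma_2;\Z)$ is finite of odd order $|\Delta_K(-1)|$, inversion on its character group fixes only the trivial character, so the $|\Delta_K(-1)|-1$ non-trivial characters fall into $(|\Delta_K(-1)|-1)/2$ inversion-pairs; this gives exactly that many conjugacy classes, a finite set that is nonempty precisely when $\Delta_K(-1)\neq\pm1$.

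The hard part will be the classification step: verifying that the normal form is available for every irreducible metabelian $\rho$, that the assignment $\phi\mapsto\rho$ is well defined on the semidirect-product presentation so that no hidden relation obstructs the anti-diagonal meridian, and that distinct inversion-pairs $\{\phi,\phi^{-1}\}$ yield non-conjugate characters. Once the bijection between non-trivial characters of $H_1(\Sigma_2;\Z)$ modulo inversion and conjugacy classes of irreducible metabelian representations is secured, the fixed-point characterization together with the order computation $|H_1(\Sigma_2;\Z)|=|\Delta_K(-1)|$ combine to give the stated count.
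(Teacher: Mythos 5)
This proposition is quoted from \cite{NagasatoYamaguchi} and the paper gives no proof of its own, so the only available comparison is with that cited source; your argument is correct and follows essentially the same route. The two halves of your plan --- (i) an irreducible fixed character forces a non-scalar $A$ with $A\rho(\gamma)A^{-1}=(-1)^{[\gamma]}\rho(\gamma)$, hence $\trace\rho(\mu)=0$ and $\rho([\knotgroup,\knotgroup])$ abelian, with the converse supplied by the diagonal/anti-diagonal normal form, and (ii) the count via non-trivial characters of $H_1(\Sigma_2;\Z)\cong H_1(\widetilde{\knotexterior};\Z)/(t+1)$, of odd order $|\Delta_K(-1)|$, taken modulo inversion --- are exactly the mechanism behind the correspondence the paper records as Proposition~\ref{prop:correspondence_abel_metabel}, and all the steps you flag as needing verification (existence of the normal form, well-definedness on the semidirect product $\Z\ltimes(\knotgroup'/\knotgroup'')$, separation of inversion-pairs by traces) go through.
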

We will review the definition of {\it metabelian} in the subsequent
Section~\ref{subsect:Corresp_Reps}.
Proposition~\ref{prop:fixed_points_involution} means that
every irreducible metabelian representation $\rho$ is conjugate to
the product $\signrep \rho$.

\section{Main results}
\label{sec:main_results}
We will describe the Reidemeister torsion of double branched covers
$\Sigma_2$ over $S^3$ along knots $K$.  It is known that every
double branched cover $\Sigma_2$ is a rational homology three sphere
since the order is determined by $\Delta_K(-1)$.
In this section, we suppose that the homology group $H_1(\Sigma; \Z)$
is non--trivial since we consider the Reidemeister torsion of
$\Sigma_2$ defined by abelian representations of $\pi_1(\Sigma_2)$.
This means that we consider knots whose determinants are not equal to $\pm 1$.
We derive the Reidemeister torsion of $\Sigma_2$ from the
Mayer--Vietoris argument for local systems induced by the following decomposition:
$$
\Sigma_2 = \coverExterior \cup D^2 \times S^1
$$
where $\widehat{E_K}$ is the cyclic $2$--fold cover of $E_K$ and
the meridian disk $\partial D^2 \times \{*\}$ is glued with the lift
of $\mu^2$.
We will see the Reidemeister torsion of $\Sigma_2$ is given by the 
product of Reidemeister torsions of $\widehat{E_K}$ and the Mayer--Vietoris homology exact sequence
in the following Subsection~$\ref{subsect:Mayer_Vietoris_argument}$.
In Subsections~\ref{subsect:Torsion_cyclic_covers_knotexteriors} 
and~\ref{subsect:Torsion_Mayer_Vietoris_sequence},
we will discuss how to derive the Reidemeister torsions of $\widehat{E_K}$ and 
the homology exact sequence with the twisted Alexander polynomial and
the character variety of $\knotgroup$.

\subsection{Correspondence of representations}
\label{subsect:Corresp_Reps}
We consider double branched covers $\Sigma_2$ of $S^3$ with
non--trivial first homology group $H_1(\Sigma_2; \Z)$.  Since the order of
$H_1(\Sigma_2; \Z)$ is finite, we have finite abelian representations
$\xi$ from $\pi_1(\Sigma_2)$ into $\GL_1(\C) = \C \setminus \{0\}$.
Such representation $\xi$ gives a local system of $\Sigma_2$ and we describe 
Mayer--Vietoris exact sequences for local systems determined by $\xi$,
induced from the decomposition
$\Sigma_2 = \coverExterior \cup D^2 \times S^1$
where $\coverExterior$ is the $2$--fold cover over $\knotexterior$ as in the following diagram:
$$
\xymatrix{
  **[l] \widehat{\mu} \subset \coverExterior \ar[r] \ar[d] & \Sigma_2 \ar[d]\\
  **[l] \mu^2 \subset \knotexterior \ar[r]& S^3
}
$$
where $\widehat{\mu}$ is the lift of $\mu^2$.
Since our main concern is a relation between Reidemeister torsions of $\Sigma_2$ and $\knotexterior$,
we will also express a local system of $\coverExterior$ as a direct sum of local systems of $\knotexterior$.

First of all, we recall what kind of representations of the knot group
$\knotgroup$, which corresponds to the $\GL_1(\C)$-representations of
$\piDB$.
\begin{proposition}[Theorem~$1$ in \cite{NagasatoYamaguchi}]
  \label{prop:correspondence_abel_metabel}
  We have the following one to one correspondence between conjugacy classes of
  $\SL$-representations:
  \begin{align*}
    &\{\xi \oplus \xi^{-1} \co \pi_1(\Sigma_2) \to \SL \,|\,
    \xi \co \pi_1(\Sigma_2) \to \GL_1(\C) \} \,/\,\hbox{conj.}\\
    &\overset{\hbox{$1:1$}}{\longleftrightarrow}
    \left\{
    \rho \co \knotgroup \to \SL \, | \,
    \trace \rho (\mu) = 0,\, 
    \rho : \hbox{metabelian}
    \right\} \,/\, \hbox{conj.}
  \end{align*}
  In particular, the number of conjugacy classes 
  is equal to $\frac{1}{2}(|\Delta_K(-1)| - 1) + 1$  where $\Delta_K(t)$ is the Alexander polynomial of $K$.
\end{proposition}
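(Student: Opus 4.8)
The plan is to realise the stated bijection through restriction to, and induction from, the index--two subgroup $\liftknotgroup$ of $\knotgroup$, which is carried by the $2$--fold cover $\coverExterior$ and equals $\alpha^{-1}(2\Z)$. Since $\piDB = \liftknotgroup / \nclos{\widehat\mu}$ with $\widehat\mu$ the lift of $\mu^2$, a one--dimensional representation $\xi \co \piDB \to \GL_1(\C)$ is the same thing as a character of $\liftknotgroup$ killing $\widehat\mu$, and I will match such data with the diagonalisation of a metabelian representation restricted to $\liftknotgroup$. The two constructions—restriction in one direction, induction in the other—will be shown to be mutually inverse on conjugacy classes.

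For the direction from the knot group, start with a metabelian $\rho \co \knotgroup \to \SL$ with $\trace \rho(\mu)=0$. By Cayley--Hamilton, $\trace\rho(\mu)=0$ and $\det\rho(\mu)=1$ give $\rho(\mu)^2 = -\I$, so $\rho(\widehat\mu)=\rho(\mu^2)=-\I$ is central. As $\rho$ is metabelian, $\rho([\knotgroup,\knotgroup])$ is abelian, and since $\liftknotgroup$ is generated by $[\knotgroup,\knotgroup]$ and $\mu^2$ with $\rho(\mu^2)$ central, the restriction $\rho|_{\liftknotgroup}$ has abelian image. When $\rho$ is irreducible I will argue by Clifford theory that $\rho|_{\liftknotgroup}$ is diagonalisable: $\liftknotgroup$ being normal, $\mu$ permutes the $\rho|_{\liftknotgroup}$--invariant lines, so a unique invariant line would be $\mu$--fixed and hence $\rho$--invariant, contradicting irreducibility; thus there are two invariant lines and $\rho|_{\liftknotgroup}\cong \phi\oplus\phi^{-1}$ for a character $\phi\co\liftknotgroup\to\C^*$. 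Comparing $\rho(\mu^2)=-\I$ with its diagonal form forces $\phi(\widehat\mu)=\phi(\mu^2)=-1$. Hence a metabelian trace--zero representation yields exactly a character $\phi$ of $\liftknotgroup$ normalised by $\phi(\widehat\mu)=-1$.

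Conversely I recover $\rho$ by induction. The key homological input is that the nontrivial deck transformation $\tau$ of the double branched cover acts as $-\mathrm{id}$ on $H_1(\Sigma_2;\Z)$, whose order is the odd number $|\Delta_K(-1)|$, so the involution has no $+1$--eigenspace. Together with $\tau(\widehat\mu)=\widehat\mu$ this forces $\phi^\tau = \phi^{-1}$ for every character with $\phi(\widehat\mu)=-1$, which is precisely the compatibility making $\rho = \mathrm{Ind}_{\liftknotgroup}^{\knotgroup}\phi$ land in $\SL$: in the standard model one gets $\rho(\mu) = \begin{pmatrix} 0 & \phi(\mu^2) \\ 1 & 0\end{pmatrix} = \begin{pmatrix} 0 & -1 \\ 1 & 0 \end{pmatrix}$, so $\det\rho(\mu)=1$ and $\trace\rho(\mu)=0$ automatically, and $\rho$ is metabelian because the index--two subgroup $\liftknotgroup$ has abelian (diagonal) image. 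It then remains to identify the admissible $\phi$ with the $\xi$: killing $\widehat\mu$ gives $H_1(\Sigma_2;\Z) = H_1(\coverExterior;\Z)/\langle[\widehat\mu]\rangle$ with $[\widehat\mu]$ of infinite order spanning a free summand, so characters $\phi$ with $\phi(\widehat\mu)=-1$ correspond bijectively to characters $\xi$ of $\piDB$, under which $\rho|_{\liftknotgroup}\cong\phi\oplus\phi^{-1}$ matches $\xi\oplus\xi^{-1}$.

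Finally I check that the two maps are inverse on conjugacy classes and count. A metabelian $\rho$ determines $\{\phi,\phi^{-1}\}$ up to the swap coming from $\rho(\mu)$ and the diagonalising conjugation, hence an unordered pair $\{\xi,\xi^{-1}\}$; on the branched--cover side $\xi\oplus\xi^{-1}$ and $\eta\oplus\eta^{-1}$ are $\SL$--conjugate exactly when $\{\xi,\xi^{-1}\}=\{\eta,\eta^{-1}\}$, so the correspondence is a bijection. Here $\rho$ is irreducible iff $\phi\neq\phi^{-1}$, i.e. $\xi\neq\xi^{-1}$; since $|\Delta_K(-1)|$ is odd this fails only for $\xi=1$, giving the single reducible (abelian) metabelian class and accounting for the summand $+1$, while the remaining characters split into $(|\Delta_K(-1)|-1)/2$ pairs $\{\xi,\xi^{-1}\}$, in agreement with the count of irreducible metabelian characters in Proposition~\ref{prop:fixed_points_involution} and yielding the total $\tfrac12(|\Delta_K(-1)|-1)+1$. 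I expect the main obstacle to be the two structural points that make the construction canonical: the semisimplicity of $\rho|_{\liftknotgroup}$ for irreducible $\rho$ (the Clifford--theoretic argument above) and the splitting of $H_1(\coverExterior;\Z)$ that pins the forced sign $\phi(\widehat\mu)=-1$ and thereby identifies $\phi$ with $\xi$.
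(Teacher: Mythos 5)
The paper itself does not prove this proposition---it is imported as Theorem~$1$ of \cite{NagasatoYamaguchi}---so your argument can only be measured against the description of the correspondence given in Subsection~\ref{subsect:Corresp_Reps}, namely $\rho \mapsto \hbox{\small $(\! \sqrt{-1} )^{p^*[\,\cdot\,]}$}\cdot p^*\rho$. Your restriction/induction (Clifford--theoretic) argument is exactly that construction, with the sign twist encoded instead as the normalization $\phi(\widehat{\mu})=-1$, and the structural steps are sound: $\rho(\mu)^2=-\I$ by Cayley--Hamilton, abelianness of $\rho(\liftknotgroup)$, the two invariant lines swapped by $\rho(\mu)$ for irreducible $\rho$, the determinant condition $\phi^{\tau}=\phi^{-1}$ forced by $\tau_*=-\mathrm{id}$ on $H_1(\Sigma_2;\Z)$, and the identification of characters $\phi$ with $\phi(\widehat{\mu})=-1$ with characters $\xi$ of $\piDB$ via the splitting $H_1(\coverExterior;\Z)\cong \Z[\widehat{\mu}]\oplus H_1(\Sigma_2;\Z)$. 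Be aware that the last two inputs (the $-\mathrm{id}$ action of the covering involution, and the fact that $[\widehat{\mu}]$ spans a free summand with torsion complement isomorphic to $H_1(\Sigma_2;\Z)$) are classical but load--bearing; the second ultimately rests on Fox's formula for the order of the torsion of $H_1(\coverExterior;\Z)$.

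The one genuine gap is the reducible case. Your restriction map is defined only when $\rho|_{\liftknotgroup}$ is semisimple, and your Clifford argument establishes that only for irreducible $\rho$; yet the stated count requires that \emph{all} reducible metabelian $\rho$ with $\trace\rho(\mu)=0$ form a single conjugacy class, accounting for the summand $+1$. A priori there could be non--diagonalizable reducible metabelian representations with $\rho(\mu)$ of eigenvalues $\pm\sqrt{-1}$ and nontrivial unipotent image of the commutator subgroup; these are neither covered by your dichotomy nor conjugate to the abelian representation. They are ruled out by the Burde--de~Rham criterion: a non--abelian reducible representation with meridian eigenvalue $z$ exists only if $\Delta_K(z^2)=0$, and here $z^2=-1$ while $\Delta_K(-1)$ is odd, hence nonzero. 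With that line added (together with the small observation that a central restriction $\rho(\liftknotgroup)\subseteq\{\pm\I\}$ forces $\rho$ to be abelian, which completes the ``one line versus two lines'' dichotomy in your Clifford step), your proof is complete and agrees with the standard argument behind the cited theorem.
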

In the context of character varieties, conjugacy classes should be replaced with 
characters.
An $\SL$-representation is called {\it metabelian}
when the image of commutator subgroup $[\knotgroup, \knotgroup]$ is an abelian subgroup in $\SL$.
\begin{remark}
  By~\cite[Lemma~$9$ and Proposition~$4$]{NagasatoYamaguchi} or
  \cite[Proposition~$1.1$]{nagasato07:_finit_of_section_of_sl},
  every irreducible metabelian representation $\rho$
  satisfies that $\trace \rho(\mu) = 0$.
  Every non--trivial homomorphism $\xi$ of $\pi_1(\Sigma_2)$ corresponds to 
  an irreducible metabelian representation of $\knotgroup$.
  The number of conjugacy classes is given by $\frac{1}{2} (|\Delta_K(-1)|-1)$.
\end{remark}

To describe the relation between local systems, we review the map giving the correspondence 
in Proposition~\ref{prop:correspondence_abel_metabel}
(we refer to~\cite[Section~$5$]{NagasatoYamaguchi}).

Let $p$ denote the induced homomorphism from $\liftknotgroup$ to $\knotgroup$.
This homomorphism $p$ sends $\widehat{\mu}$ to $\mu^2$.
The pull--back of $\rho$ gives an $\SL$-representation $p^* \rho$ of $\liftknotgroup$.
Since this representation $p^* \rho$ sends $\widehat{\mu}$ to $-\I$,
we need a sign refinement to give an $\SL$-representation of $\piDB$,
which is induced from the following $1$-dimensional representation:
\begin{align*}
  \left(\! \sqrt{-1} \right)^{[\,\cdot\,]} \co 
  \knotgroup &\to \GL_1(\C) \\
  \gamma &\mapsto \left(\! \sqrt{-1} \right)^{[\gamma]}
\end{align*}
where $[\gamma]$ denotes an integer corresponding to the homology class
under the isomorphism 
$$
H_1(\knotexterior;\Z) \ni
\mu \mapsto 1 \in \Z.
$$

The product of {\small $(\! \sqrt{-1} )^{p^* [\,\cdot\,]}$} and $p^* \rho$ defines 
an $\SL$-representation of $\piDB$. 
Here we identify the fundamental group $\piDB$ with the quotient group
$\liftknotgroup / \nclos{\widehat{\mu}}$
where $\nclos{\widehat{\mu}}$ denotes the normal closure of $\widehat{\mu}$.
\begin{definition} 
  We say that an irreducible metabelian $\SL$-representation $\rho$ corresponds to $\xi$
  if {\small $(\! \sqrt{-1} )^{p^* [\,\cdot\,]}$}$ \cdot p^* \rho$ induces the diagonal representation 
  $\xi \oplus \xi^{-1}$ on $\piDB$.
\end{definition}

We need to consider the relation of local systems of $\Sigma_2$ and
$\knotexterior$ given by the $\SL$-representation $\xi \oplus \xi^{-1}$ and 
the $\GL_2(\C)$-representation {\small $(\! \sqrt{-1} )^{[\,\cdot\,]}$} $\rho$. 
From \cite[Section~$3.2$ and $4.2$]{yamaguchi:twistedAlexMeta}, we can see that 
the adjoint representation defined as Eq.~\eqref{eqn:def_adjoint_rep}
is useful for this purpose as follows.
We can find the $2$-dimensional representation {\small $(\! \sqrt{-1} )^{[\,\cdot\,]}$}
$\rho$ as a direct summand in the adjoint representation for
another irreducible metabelian representation of $\knotgroup$.

\begin{lemma}
  \label{lemma:adjoint_metabelian}
  For every irreducible metabelian $\SL$-representation $\rho$ of the knot group $\knotgroup$,
  there exists another irreducible metabelian representation $\rho'$ and an $\SL$-matrix $C$
  satisfying the following decomposition:
  $$
  Ad \circ \rho' =
  (-1)^{[\, \cdot \,]} \oplus C \, \sqrtrep \rho \, C^{-1}.
  $$
  where the equality holds for the ordered basis $\{H, E, F\}$ of $\sll$.
\end{lemma}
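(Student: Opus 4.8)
The plan is to put the metabelian representation into a concrete normal form and to compute the adjoint action directly in the ordered basis $\{H,E,F\}$, after which the lemma reduces to an existence statement about characters of $H_1(\Sigma_2;\Z)$.

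First I would record the normal form for the representation $\rho'$ whose adjoint is to be decomposed. Since an irreducible metabelian representation sends $\mu$ to a trace--free element, after conjugation I may assume that the commutator subgroup $[\knotgroup,\knotgroup]$ maps into the diagonal torus, say $\rho'(g)=\mathrm{diag}(\phi'(g),\phi'(g)^{-1})$, while $\rho'(\mu)=\left(\begin{smallmatrix}0 & a'\\ b' & 0\end{smallmatrix}\right)$ is anti--diagonal with $a'b'=-1$; here $\phi'$ is the character through which $\rho'$ factors on the finite group $H_1(\Sigma_2;\Z)$. In this form $\rho'(\gamma)$ is diagonal when $[\gamma]$ is even and anti--diagonal when $[\gamma]$ is odd. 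Next I would compute $Ad_{\rho'(\gamma)}$ on $\{H,E,F\}$: a direct calculation gives that a diagonal element $\mathrm{diag}(\delta,\delta^{-1})$ acts as $\mathrm{diag}(1,\delta^{2},\delta^{-2})$, whereas an anti--diagonal $\left(\begin{smallmatrix}0 & a\\ b & 0\end{smallmatrix}\right)$ with $ab=-1$ sends $H\mapsto -H$, $E\mapsto -b^{2}F$ and $F\mapsto -a^{2}E$. Two facts then fall out at once: the line $\C H$ is invariant under all of $Ad\circ\rho'$, and on it $Ad_{\rho'(\gamma)}$ acts by $(-1)^{[\gamma]}$, that is, exactly by $\signrep$; and the plane $\langle E,F\rangle$ is an invariant complement. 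This already yields $Ad\circ\rho'=\signrep\oplus\sigma$ in the basis $\{H,E,F\}$, where $\sigma$ is the $2$--dimensional representation carried by $\langle E,F\rangle$.

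It then remains to identify $\sigma$ with $C\,\sqrtrep\rho\,C^{-1}$ for a suitable metabelian $\rho$ and $C\in\SL$. Reading $\sigma$ off in the basis $\{E,F\}$, it is $\mathrm{diag}(\phi'(\gamma)^{2},\phi'(\gamma)^{-2})$ on the even part and anti--diagonal on the odd part. Writing the target in the normal form $\rho(g)=\mathrm{diag}(\psi(g),\psi(g)^{-1})$, $\rho(\mu)=\left(\begin{smallmatrix}0 & \alpha\\ \beta & 0\end{smallmatrix}\right)$, the comparison on the even part (where a diagonal $C$ acts trivially) forces the single relation $\phi'(\gamma)^{2}=(\sqrt{-1})^{[\gamma]}\psi(\gamma)$. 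I would then check that this \emph{same} relation reconciles the odd part after conjugating by the diagonal matrix $C=\mathrm{diag}(c_1,c_2)$ with $c_1/c_2=\sqrt{-1}\,a'^{2}/\alpha$; the determinant identities $a'b'=\alpha\beta=-1$ guarantee that the two off--diagonal conditions coming from $\mu$ are compatible, so that one fixed $C$ works for every $\gamma$ once it works on the generators.

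The heart of the argument, and the step I expect to be the main obstacle, is the \emph{existence} of $\rho'$: given $\rho$, equivalently its character $\psi$ on $H_1(\Sigma_2;\Z)$, I must produce a character $\phi'$ with $\phi'(\gamma)^{2}=(\sqrt{-1})^{[\gamma]}\psi(\gamma)$. Since $H_1(\Sigma_2;\Z)$ has odd order $|\Delta_K(-1)|$, squaring is an automorphism of its (isomorphic) character group, so a unique such square root $\phi'$ exists, the sign twist on powers of $\mu$ being mere bookkeeping. Uniqueness of square roots also shows that $\phi'$ inherits the inversion constraint $\phi'(t\cdot x)=\phi'(x)^{-1}$ satisfied by $\psi$, so $\phi'$ does define a genuine metabelian representation $\rho'$; because $\psi$ is non--trivial, $\phi'$ is non--trivial and hence $\rho'$ is irreducible, and moreover $\rho'\not\sim\rho$ (as $\phi'=\psi^{\pm1}$ would force $\psi$ trivial). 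Assembling the block $\signrep$ on $\C H$ with the block $C\,\sqrtrep\rho\,C^{-1}$ on $\langle E,F\rangle$ then gives the asserted decomposition in the ordered basis $\{H,E,F\}$.
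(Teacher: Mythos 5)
Your proof is correct and follows essentially the same route as the paper: the paper also conjugates $\rho(\mu)$ into anti--diagonal normal form and obtains the splitting $\sll=\langle H\rangle\oplus\langle E,F\rangle$ with $\signrep$ on the first summand, except that it delegates both the block decomposition and the existence of $\rho'$ to citations of Propositions~$3.12$ and~$3.14$ of the author's earlier paper, whereas you carry out the adjoint computation and the odd--order square--root argument for the character of $H_1(\Sigma_2;\Z)$ explicitly. One small caveat in a non--essential aside: your claim that $\phi'=\psi^{-1}$ would force $\psi$ trivial only gives $\psi^3=1$, so $\rho'$ can in fact be conjugate to $\rho$ when $3$ divides $|\Delta_K(-1)|$ --- consistent with the paper's remark that the two classes are only ``usually'' distinct, and irrelevant to the statement being proved.
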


\begin{proof}
  Since $\trace \rho(\mu)=0$, we can choose $C \in \SL$ satisfying 
  $$
  C \rho(\mu) C^{-1} =
  \begin{pmatrix}
    0 & \sqrt{-1} \\
    \sqrt{-1} & 0
  \end{pmatrix}.
  $$
  From~\cite[Proposition~$3.14$]{yamaguchi:twistedAlexMeta}, we can
  decompose the adjoint representation of each irreducible metabelian
  representation into $\psi_1 \oplus \psi_2$ where $\psi_1$ is
  $1$-dimensional representation $(-1)^{[\, \cdot \, ]}$ and $\psi_2$ is $2$-dimensional one.
  To be more precise,
  we can choose a representative in the conjugacy class of
  irreducible metabelian representations, whose adjoint representation decomposes
  $\sll$ into $V_1 \oplus V_2$ where $V_1 = \langle H \rangle$ and $V_2 = \langle E, F \rangle$.

  It also follows from~\cite[Proposition~$3.12$ and the sequel of Eq.~$(11)$]{yamaguchi:twistedAlexMeta} that
  there exists another metabelian $\SL$-representation $\rho'$ such that each summand $\psi_1$ and 
  $\psi_2$ in $Ad \circ \rho'$ send the meridian $\mu$ to the matrices $-1$ and 
  $\begin{pmatrix}
    0 & -1 \\
    -1 & 0
  \end{pmatrix}$ and the commutator subgroup into $\{1\}$ and the abelian subgroup in $\SL$,
  given by the image of $\rho$.
\end{proof}

\begin{remark}
  In Proposition~\ref{lemma:adjoint_metabelian},
  two irreducible metabelian representation $\rho$ and $\rho'$ are usually contained
  in distinct conjugacy classes of $\SL$-representations of $\knotgroup$ since
  they have the different image of the commutator subgroup.
  We refer to~\cite[Section~$5$]{yamaguchi:twistedAlexMeta} for explicit examples.
\end{remark}

\begin{definition}
  \label{def:irred_metarep_form}
  Here and subsequently, we always use the symbol $\rho$ to denote
  an irreducible metabelian $\SL$-representation
  of $\knotgroup$ which sends the meridian $\mu$ to the matrix
  $\begin{pmatrix}
    0 & \sqrt{-1}\\
    \sqrt{-1} & 0
  \end{pmatrix}$.
  We will denote by $\psi_1$ and $\psi_2$
  the $1$-dimensional representation given by $(-1)^{[\, \cdot \,]}$ and
  the $2$-dimensional one given by 
  {\small $(\! \sqrt{-1} )^{[\,\cdot\,]}$} $\rho$ for an irreducible metabelian $\SL$-representation $\rho$
  of $\knotgroup$.
\end{definition}

Under the assumption in Definition~\ref{def:irred_metarep_form},
every metabelian representation $\rho$ sends the commutator subgroup into the maximal abelian subgroup consisting
diagonal matrices in $\SL$.
We summarize the properties of the adjoint representation $Ad \circ \rho'$ in Lemma~\ref{lemma:adjoint_metabelian}.

\begin{remark}
  \label{remark:rest_adjoint_rep}
  Suppose that $\rho$ corresponds to $\xi \co \piDB \to \GL_1(\C)$.
  \begin{itemize}
  \item The pull--back $p^* (Ad \circ \rho')$ sends $\widehat{\mu}$
    to the identity matrix.
  \item The pull--back $p^* (Ad \circ \rho')$ turns out to be an $\SL[3]$-representation 
    $(-1)^{p^* [\, \cdot \,]} \oplus$ {\small $(\! \sqrt{-1} )^{p^*[\,\cdot\,]}$} $p^* \rho$ of $\liftknotgroup$.
    Since $p^* [\, \cdot \, ]$ maps $\liftknotgroup$ onto $2\Z$, the pull--back $p^* (Ad \circ \rho')$ induces
    the diagonal representation $\I \oplus (\xi \oplus \xi^{-1})$ on $\piDB$.
    Here $\I$ denotes the trivial $1$-dimensional representation.
  \item
    Since a preferred longitude $\lambda$ is included in the second commutator subgroup,
    every metabelian representation send $\lambda$ to the identity matrix.
    Hence the restriction of $p^* (Ad \circ \rho')$ on $\pi_1(\partial \coverExterior)$ is
    the trivial $3$-dimensional representation.
  \end{itemize}
\end{remark}
We will identify $\widehat{\mu}$ with $\mu^2$ in $\knotgroup$ under the injective homomorphism $p$.

\subsection{Mayer--Vietoris argument}
\label{subsect:Mayer_Vietoris_argument}
When we consider the local system of $\coverExterior$ induced by the pull--back $p^*(Ad \circ \rho')$,
we can extend this local system to that of $\Sigma_2$ by Lemma~\ref{lemma:adjoint_metabelian}.
We also define local systems on the boundary torus $\partial \coverExterior$ and 
the attached solid torus $\Sigma_2 \setminus \hbox{int}(\coverExterior)$
by the restrictions of local system of $\Sigma_2$.
Note that the coefficients of these local systems are $\sll$.
From the decomposition $\Sigma_2 = \coverExterior \cup D^2 \times S^1$,
we have the short exact sequence of local systems with the coefficient $\sll$:
\begin{equation}
  \label{eqn:short_exact_sll}
  0 \to C_*(\widehat{T^2};\sll)
  \to C_*(\coverExterior;\sll) \oplus C_*(D^2 \times S^1;\sll)
  \to C_*(\Sigma_2;\sll) \to 0
  \tag{$\mathcal{S}$}
\end{equation}
where $\widehat{T^2}$ denotes the boundary torus $\partial \coverExterior$.

We can see that the two chain complexes for $\widehat{T^2}$ and $D^2 \times S^1$
are determined by the usual chain complex with the coefficient $\C$ 
since the restriction of $p^* (Ad \circ \rho')$ on each fundamental group is
trivial as in Remark~\ref{remark:rest_adjoint_rep} .
\begin{lemma}
  In the short exact sequence~\eqref{eqn:short_exact_sll},
  the chain complexes $C_*(\widehat{T^2};\sll)$ and $C_*(D^2 \times S^1;\sll)$
  turns into $\sll \otimes_{\C} C_*(T^2;\C)$ and $\sll \otimes_{\C} C_*(D^2 \times S^1;\C)$.
\end{lemma}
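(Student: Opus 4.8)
The plan is to read off the triviality of the relevant restricted representations from Remark~\ref{remark:rest_adjoint_rep} and then unwind the definition of the twisted chain complex. Recall that for a CW--complex $W$ and the coefficient $\sll$ twisted by a representation $\sigma$ of $\pi_1(W)$, the module $C_i(W;\sll)=\sll\otimes_{\Z[\pi_1(W)]}C_i(\univcover{W};\Z)$ is governed by the identification $\bm{v}\otimes\gamma c = \sigma(\gamma)^{-1}(\bm{v})\otimes c$. Here the twisting on both $\widehat{T^2}=\partial\coverExterior$ and $D^2\times S^1$ is the restriction of the local system on $\Sigma_2$ induced by $p^*(Ad\circ\rho')$, so the whole argument reduces to checking that these restrictions are the trivial $3$-dimensional representation.

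First I would dispatch $\widehat{T^2}$. By the third item of Remark~\ref{remark:rest_adjoint_rep}, the restriction of $p^*(Ad\circ\rho')$ to $\pi_1(\partial\coverExterior)$ is the trivial $3$-dimensional representation, so every $\gamma\in\pi_1(\widehat{T^2})$ acts on $\sll$ as the identity. For the solid torus I would use that the inclusion $\widehat{T^2}=\partial(D^2\times S^1)\hookrightarrow D^2\times S^1$ is surjective on $\pi_1$: the meridian $\partial D^2\times\{*\}$, glued to $\widehat{\mu}$, dies, while the longitude maps onto the generator of $\pi_1(D^2\times S^1)\cong\Z$. Hence the representation on $\pi_1(D^2\times S^1)$ is determined by its restriction to $\pi_1(\widehat{T^2})$ and is therefore also trivial; this is consistent with the first item of Remark~\ref{remark:rest_adjoint_rep}, where $p^*(Ad\circ\rho')$ already sends $\widehat{\mu}$ to the identity.

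With triviality in hand, the identification rule collapses to $\bm{v}\otimes\gamma c=\bm{v}\otimes c$, so the right $\Z[\pi_1(W)]$-module structure on $\sll$ factors through the augmentation $\Z[\pi_1(W)]\to\Z$. Therefore
\[
\sll\otimes_{\Z[\pi_1(W)]}C_*(\univcover{W};\Z)\;\cong\;\sll\otimes_\Z\bigl(\Z\otimes_{\Z[\pi_1(W)]}C_*(\univcover{W};\Z)\bigr)\;=\;\sll\otimes_\Z C_*(W;\Z),
\]
and since $\sll$ is a $\C$-vector space this is exactly $\sll\otimes_\C C_*(W;\C)$, applied with $W=\widehat{T^2}$, which is a torus so $C_*(\widehat{T^2};\C)=C_*(T^2;\C)$, and with $W=D^2\times S^1$. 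Concretely, fixing the standard basis $\{E,H,F\}$ of $\sll$ realizes each twisted module as three copies of the ordinary cellular chain module with the boundary operators acting diagonally. The one point demanding care---hence the only real obstacle---is the $\pi_1$-surjectivity claim for the solid torus, that is, confirming that the generator of $\pi_1(D^2\times S^1)$ is represented by a loop on $\widehat{T^2}$ on which the representation has already been shown to be trivial; everything else is a formal consequence of the definitions.
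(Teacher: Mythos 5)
Your argument is correct and follows essentially the same route as the paper, which justifies this lemma solely by the observation preceding it that the restriction of $p^*(Ad\circ\rho')$ to each fundamental group is trivial by Remark~\ref{remark:rest_adjoint_rep}; your additional check that $\pi_1(\widehat{T^2})\to\pi_1(D^2\times S^1)$ is surjective (equivalently, that the core circle is the image of the longitude, on which every metabelian representation is already trivial) correctly fills in the one detail the paper leaves implicit.
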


We also show the decomposition of $C_*(\Sigma_2;\sll)$ used in the remain of this subsection.
\begin{lemma}
  \label{lemma:decomp_twistedchain_DB}
  The twisted chain complex $C_*(\Sigma_2;\sll)$ is decomposed into
  the following direct sum;
  $$
  C_*(\Sigma_2;\C) \oplus C_*(\Sigma_2;\C_\xi) \oplus C_*(\Sigma_2;\C_{\xi^{-1}}).
  $$
  The last summand $C_*(\Sigma_2;\C_{\xi^{-1}})$ coincides with
  $C_*(\Sigma_2;\C_{\bar \xi})$ where
  $\bar \xi$ denotes the complex conjugate representation of $\xi$.
\end{lemma}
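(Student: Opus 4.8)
The plan is to reduce the statement to a decomposition of $\sll$ as a $\Z[\piDB]$-module and then invoke the additivity of the twisted chain complex in its coefficient module. No torsion computation is needed here; the content is purely the identification of the three local systems.

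First I would recall from Remark~\ref{remark:rest_adjoint_rep} that the coefficient system $\sll$ on $\Sigma_2$ is the one descending from $p^*(Ad \circ \rho')$, and that as a representation of $\piDB$ it is the diagonal sum $\I \oplus (\xi \oplus \xi^{-1})$. Here the splitting $\sll = \langle H \rangle \oplus \langle E, F \rangle$ of Lemma~\ref{lemma:adjoint_metabelian} is respected by the $\piDB$-action: the line $\langle H \rangle$ carries the trivial representation $\I$, since its character $\signrep$ pulls back to the constant $1$ on $\liftknotgroup$ (because $p^*[\,\cdot\,]$ takes values in $2\Z$), while the plane $\langle E, F \rangle$ carries $\xi \oplus \xi^{-1}$. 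Because this latter action is diagonal --- on $\liftknotgroup$ the representation $\rho$ is already diagonal, as $\rho$ sends the commutator subgroup into the diagonal torus and $\rho(\mu^2) = -\I$ --- the two coordinate lines of $\langle E, F \rangle$ are $\piDB$-invariant and realize the characters $\xi$ and $\xi^{-1}$. This gives the $\Z[\piDB]$-module isomorphism $\sll \cong \C \oplus \C_\xi \oplus \C_{\xi^{-1}}$.

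Next I would feed this into the definition $C_*(\Sigma_2;\sll) = \sll \otimes_{\Z[\piDB]} C_*(\widetilde{\Sigma_2};\Z)$. Since $\otimes_{\Z[\piDB]}$ commutes with finite direct sums in the left factor, the module splitting propagates to a splitting of based chain complexes $C_*(\Sigma_2;\C) \oplus C_*(\Sigma_2;\C_\xi) \oplus C_*(\Sigma_2;\C_{\xi^{-1}})$, the basis being the obvious refinement of the standard basis of $\sll$. This establishes the first assertion.

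Finally, for $C_*(\Sigma_2;\C_{\xi^{-1}}) = C_*(\Sigma_2;\C_{\bar\xi})$ it suffices to check $\xi^{-1} = \bar\xi$ as characters. Since $\Sigma_2$ is a rational homology sphere, $H_1(\Sigma_2;\Z)$ is finite, and the abelian representation $\xi$ factors through it; hence the image of $\xi$ consists of roots of unity, for which $\overline{\xi(g)} = \xi(g)^{-1}$. Thus $\bar\xi = \xi^{-1}$ and the two coefficient systems coincide. The only genuinely non-formal point in the whole argument is confirming that $\langle E, F \rangle$ splits as a $\piDB$-module into the $\xi$- and $\xi^{-1}$-eigenlines rather than forming a non-split extension; this is exactly where the diagonal form of $\rho|_{\liftknotgroup}$ (equivalently, the diagonality asserted in Remark~\ref{remark:rest_adjoint_rep}) is used. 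Everything else is the additivity of the tensor product and an elementary fact about roots of unity.
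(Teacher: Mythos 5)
Your proposal is correct and follows essentially the same route as the paper: the paper's proof likewise invokes the fact (recorded in Remark~\ref{remark:rest_adjoint_rep}) that $p^*(Ad\circ\rho')$ descends to the diagonal representation $\I\oplus(\xi\oplus\xi^{-1})$ of $\piDB$, splits the chain complex accordingly, and identifies $\xi^{-1}$ with $\bar\xi$ via finiteness of $H_1(\Sigma_2;\Z)$. Your write-up merely supplies more detail on why the $\piDB$-module $\sll$ actually splits (diagonality of $\rho$ on $\liftknotgroup$), which the paper leaves implicit.
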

\begin{proof}
  Since the pull--back $p^* (Ad \circ \rho')$ gives
  the diagonal representation $\I \oplus (\xi \oplus \xi^{-1})$ of $\pi_1(\Sigma_2)$, the
  twisted chain complex $C_*(\Sigma;\sll)$ turns into the direct sum
  $C_*(\Sigma_2;\C) \oplus C_*(\Sigma_2;\C_\xi) \oplus C_*(\Sigma_2;\C_{\xi^{-1}})$.
  Since $H_1(\Sigma_2;\Z)$ is finite, 
  the representation $\xi^{-1}$ agrees with the complex conjugate $\bar \xi$.
\end{proof}

To express the homology groups, let us introduce the notations for cycles in the twisted chain complex
$C_*(T^2;\sll)$ as in 
Figure~\ref{fig:double_cover_torus}:
\begin{figure}[ht]
  \begin{center}
    \includegraphics[scale=.4]{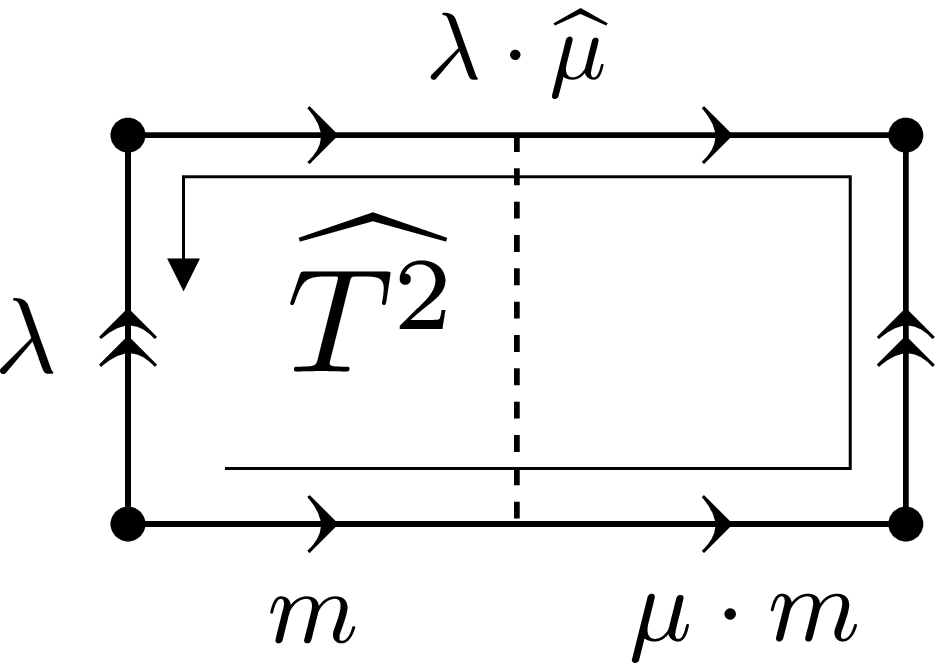}
  \end{center}
  \caption{The symbol $m$ denotes
    a lift of the meridian $\mu$ and a lift of longitude $\lambda$
    is denoted by the same symbol.}
  \label{fig:double_cover_torus}
\end{figure}

Note that the chains corresponding to $\widehat{\mu}$ and $\widehat{T^2}$ 
is expressed as
$m + \mu \cdot m$ 
and $T^2 + \mu \cdot T^2$
tensored with a vector of $\sll$
in $\sll \otimes_{\Z[\pi_1(\widehat{T^2})]} C_*(\univcover{T^2};\Z)$ where $\univcover{T^2}$ is
the universal cover of $T^2 = \boundaryTorus$.
We will denote lifts of chains briefly by the same symbols when no confusion can arise.

The homology groups of $C_*(\widehat{T^2};\sll)$ and $C_*(D^2 \times S^1;\sll)$ 
are expressed as follows.
\begin{lemma}
  \label{lemma:homology_doubleCoverTorus_solidTorus}
  \begin{align*}
    H_*(\widehat{T^2};\sll) &=
    \begin{cases}
      \sll \otimes \widehat{T^2}
      & (*=2) \\
      \sll \otimes \widehat{\mu} \oplus \sll \otimes \lambda
      & (*=1) \\
      \sll \otimes \bpt & (*=0) \\
      \bm{0} & {\rm otherwise},
    \end{cases}
    \\
    H_*(D^2 \times S^1;\sll) &=
    \begin{cases}
      \sll \otimes \lambda & (*=1) \\
      \sll \otimes \bpt & (*=0) \\
      \bm{0} & {\rm otherwise}.
    \end{cases}
  \end{align*}
\end{lemma}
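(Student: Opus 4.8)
The plan is to reduce both computations to untwisted homology. By the preceding Lemma and Remark~\ref{remark:rest_adjoint_rep}, the coefficient system $\sll$ restricts \emph{trivially} to $\pi_1(\widehat{T^2})$ and to $\pi_1(D^2 \times S^1)$, so the twisted chain complexes are constant tensor products, namely $C_*(\widehat{T^2};\sll) = \sll \otimes_\C C_*(T^2;\C)$ and $C_*(D^2 \times S^1;\sll) = \sll \otimes_\C C_*(D^2 \times S^1;\C)$. Since we are working over the field $\C$, the vector space $\sll$ is flat and taking homology commutes with the constant tensor factor, so that
\[
H_*(\widehat{T^2};\sll) \cong \sll \otimes_\C H_*(T^2;\C), \qquad
H_*(D^2 \times S^1;\sll) \cong \sll \otimes_\C H_*(D^2 \times S^1;\C).
\]
It then remains only to compute the ordinary homology of a torus and of a solid torus and to identify explicit generators.

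For the first factor I would use that $\widehat{T^2} = \partial \coverExterior$ is the connected double cover of $T^2 = \boundaryTorus$ determined by the parity of $[\,\cdot\,]$; since $[\mu]$ is odd while $[\lambda]=0$, the curve $\mu$ does not lift to a loop, whereas $\widehat{\mu}=\mu^2$ and $\lambda$ do, and these generate $\pi_1(\widehat{T^2})$. Thus $\widehat{T^2}$ is again a torus, so $H_0 = \C\cdot\bpt$, $H_1 = \C\cdot\widehat{\mu} \oplus \C\cdot\lambda$, and $H_2 = \C\cdot\widehat{T^2}$ is spanned by the fundamental class; tensoring each group with $\sll$ produces exactly the three summands claimed.

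For the solid torus I would use that $D^2 \times S^1$ deformation retracts onto its core circle, so its homology is concentrated in degrees $0$ and $1$. The only point requiring care is identifying which boundary curve is killed: by the decomposition $\Sigma_2 = \coverExterior \cup D^2 \times S^1$ fixed at the start of the section, the meridian disk $\partial D^2 \times \{*\}$ is glued along the lift of $\mu^2$, so $\widehat{\mu}$ bounds a disk and dies in $H_1$, while the core is carried by $\lambda$. Hence $H_0 = \C\cdot\bpt$ and $H_1 = \C\cdot\lambda$ with higher groups vanishing, and tensoring with $\sll$ gives the second display. No genuine difficulty arises beyond this bookkeeping of meridian versus core, which is entirely dictated by the chosen gluing; both assertions are then immediate from the triviality of the local system established in the preceding Lemma.
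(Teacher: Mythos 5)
Your proposal is correct and follows exactly the route the paper intends: the preceding lemma (via Remark~\ref{remark:rest_adjoint_rep}) reduces both twisted complexes to $\sll \otimes_\C C_*(\,\cdot\,;\C)$, after which the statement is the standard homology of a torus and a solid torus with the generators dictated by the gluing of the meridian disk along $\widehat{\mu}$. The paper leaves this step without an explicit proof, and your write-up supplies precisely the bookkeeping (connectedness of the double cover $\widehat{T^2}$, the lifting of $\widehat{\mu}=\mu^2$ and $\lambda$, and which curve bounds in the solid torus) that justifies it.
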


The short exact sequence~\eqref{eqn:short_exact_sll} induces the
Mayer--Vietoris long exact sequence of homology groups with $\sll$-coefficient:
\begin{equation}
  \label{eqn:Mayer_Vietoris}
  \cdots \to H_{i+1}(\Sigma)
  \to H_i(\widehat{T^2})
  \to H_i(\coverExterior) \oplus H_i(D^2 \times S^1)
  \to H_i(\Sigma_2)
  \to \cdots
  \tag{$\mathcal{H}$}
\end{equation}

We can deduce the twisted homology groups $H_*(\Sigma_2;\sll)$ and
$H_*(\coverExterior;\sll)$
from the Mayer--Vietoris exact sequence~\eqref{eqn:Mayer_Vietoris}.
\begin{proposition}
  \label{prop:twistedhomology_coveringspaces}
  Let $\rho$ be an irreducible metabelian $\SL$-representation of $\knotgroup$
  and $\rho'$ another one as in Lemma~\ref{lemma:adjoint_metabelian}.
  If the twisted chain complex $C_*(\Sigma_2;\C_{\xi})$ is acyclic,
  then the twisted homology groups of $C_*(\Sigma_2;\sll)$ and $C_*(\coverExterior;\sll)$
  is expressed as follows:
  \begin{align*}
    H_*(\Sigma_2;\sll) &\simeq
    \begin{cases}
      V_1 & (*=0, 3) \\
      \bm{0} & {\rm otherwise},
    \end{cases}&
    H_*(\coverExterior;\sll) &\simeq
    \begin{cases}
      V_2 & (*=2) \\
      \sll & (*=1) \\
      V_1 & (*=0) \\
      \bm{0} & {\rm otherwise}.
    \end{cases}
  \end{align*}
  where $V_1$ and $V_2$ denote the $1$-dimensional subspace and $2$-dimensional one of $\sll$
  corresponding to the representation $Ad \circ \rho' = \psi_1 \oplus \psi_2$ of $\knotgroup$.
\end{proposition}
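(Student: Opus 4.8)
The plan is to reduce everything to the splitting $\sll = V_1 \oplus V_2$ and to compute the $V_1$- and $V_2$-parts separately, because with respect to this splitting the Mayer--Vietoris sequence~\eqref{eqn:Mayer_Vietoris} itself breaks into two independent sequences. By Remark~\ref{remark:rest_adjoint_rep} the pull-back $p^*(Ad \circ \rho') = p^*\psi_1 \oplus p^*\psi_2$ splits $\sll$ into the line $V_1 = \langle H \rangle$, on which $p^*\psi_1 = (-1)^{p^*[\,\cdot\,]}$ acts trivially (since $p^*[\,\cdot\,]$ is $2\Z$-valued), and the $2$-dimensional module $V_2 = \langle E, F \rangle$ carrying $p^*\psi_2$; moreover its restriction to $\pi_1(\widehat{T^2})$ and to $\pi_1(D^2 \times S^1)$ is trivial. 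Hence each term of~\eqref{eqn:short_exact_sll}, and thus the whole of~\eqref{eqn:Mayer_Vietoris}, is a direct sum of a $V_1$-part and a $V_2$-part, which I would analyse one at a time.

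First I would settle $H_*(\Sigma_2;\sll)$ directly from Lemma~\ref{lemma:decomp_twistedchain_DB}, with no Mayer--Vietoris needed. That lemma writes $C_*(\Sigma_2;\sll)$ as $C_*(\Sigma_2;\C) \oplus C_*(\Sigma_2;\C_\xi) \oplus C_*(\Sigma_2;\C_{\xi^{-1}})$. The first summand is the untwisted complex of the closed oriented rational homology sphere $\Sigma_2$, so its homology is a copy of $\C$ in degrees $0$ and $3$ and is zero otherwise; identifying this line with $V_1$ gives exactly the claimed $H_*(\Sigma_2;\sll)$. The remaining two summands both vanish: $C_*(\Sigma_2;\C_\xi)$ is acyclic by hypothesis, and $C_*(\Sigma_2;\C_{\xi^{-1}})$ is acyclic because finiteness of $H_1(\Sigma_2;\Z)$ forces $\xi^{-1} = \bar\xi$, and complex conjugation is a chain isomorphism preserving acyclicity.

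Next I would compute $H_*(\coverExterior;\sll)$ from the split Mayer--Vietoris sequences, feeding in $H_*(\widehat{T^2};\sll)$ and $H_*(D^2 \times S^1;\sll)$ from Lemma~\ref{lemma:homology_doubleCoverTorus_solidTorus} (both with trivial coefficients, hence splitting cleanly) together with the $H_*(\Sigma_2;\sll)$ just found, and using that $\coverExterior$ is a compact $3$-manifold with boundary so that $H_{\geq 3}(\coverExterior;\sll) = 0$. In the $V_2$-part the homology of $\Sigma_2$ vanishes in every degree, so exactness collapses the sequence into isomorphisms $H_i(\widehat{T^2};V_2) \cong H_i(\coverExterior;V_2) \oplus H_i(D^2 \times S^1;V_2)$ for all $i$; reading these in degrees $2,1,0$ yields $H_2(\coverExterior;V_2) = V_2$, $H_1(\coverExterior;V_2) \cong V_2$ (by a dimension count, the solid-torus summand carried by $\lambda$ being split off, so that the class carried by $\widehat{\mu}$ remains), and $H_0(\coverExterior;V_2) = 0$. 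In the $V_1$-part the sequence is just the ordinary Mayer--Vietoris sequence of $\Sigma_2 = \coverExterior \cup D^2 \times S^1$; since $H_*(\Sigma_2;\C) = \C$ in degrees $0,3$, exactness and dimension counting force $H_*(\coverExterior;\C)$ to be $\C$ in degrees $0$ and $1$ and zero in degrees $2,3$ (equivalently $b_1(\coverExterior) = 1$ and $b_2(\coverExterior) = 0$). Reassembling $H_*(\coverExterior;\sll) = H_*(\coverExterior;V_1) \oplus H_*(\coverExterior;V_2)$ gives $V_1$ in degree $0$, $V_1 \oplus V_2 = \sll$ in degree $1$, $V_2$ in degree $2$, and $\bm{0}$ otherwise.

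I expect the main difficulty to be bookkeeping rather than a single hard estimate: I must check that the inclusion-induced maps have the right ranks so that exactness pins down the homology of $\coverExterior$ as the specific submodules $V_1$ and $V_2$, not merely up to dimension. The $V_1 \oplus V_2$ splitting is precisely what makes this manageable, because on the torus and the solid torus the coefficients are untwisted and the boundary behaviour of $\widehat{\mu}$ and $\lambda$ is explicit. The only genuinely external inputs are the acyclicity transfer from $\xi$ to $\bar\xi$, the rational-homology-sphere property of $\Sigma_2$ (which holds because $\Delta_K(-1) \neq 0$), and the vanishing $H_{\geq 3}(\coverExterior;\sll) = 0$; everything else is exactness bookkeeping.
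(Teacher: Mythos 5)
Your proposal is correct and follows essentially the same route as the paper: the $H_*(\Sigma_2;\sll)$ part comes from the decomposition in Lemma~\ref{lemma:decomp_twistedchain_DB} together with acyclicity of the $\C_\xi$- and $\C_{\bar\xi}$-summands, and the $H_*(\coverExterior;\sll)$ part is read off from the Mayer--Vietoris sequence~\eqref{eqn:Mayer_Vietoris} with the input of Lemma~\ref{lemma:homology_doubleCoverTorus_solidTorus}. The only difference is that you spell out the $V_1\oplus V_2$ splitting of the sequence and the exactness/dimension bookkeeping that the paper leaves implicit, and your counts check out.
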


\begin{proof}
  Lemma~\ref{lemma:decomp_twistedchain_DB} gives the decomposition
  $C_*(\Sigma_2;\sll)=C_*(\Sigma_2;\C) \oplus C_*(\Sigma_2;\C_\xi) \oplus C_*(\Sigma_2;\C_{\bar \xi})$
  where the coefficient of $C_*(\Sigma_2;\C)$ corresponds to the subspace $V_1$ in $\sll$.
  By definition, the twisted chain complex
  $C_*(\Sigma_2;\C_{\bar \xi})$ is also acyclic when
  $C_*(\Sigma_2;\C_\xi)$ is acyclic.
  Hence $H_*(\Sigma_2;\sll)$ coincides with $H_1(\Sigma_2;\C)$ and it is expressed as in our statement.
  The homology group $H_*(\coverExterior;\sll)$ follows from
  the Mayer-Vietoris exact sequence~\eqref{eqn:Mayer_Vietoris} and
  Lemma~\ref{lemma:homology_doubleCoverTorus_solidTorus}.
\end{proof}

\begin{remark}
  In Proposition~\ref{prop:twistedhomology_coveringspaces},
  the basis of $H_*(\Sigma_2;\sll)$ is given by
  the fundamental cycle and the base point of $\Sigma_2$.
  Also the generators of $H_*(\coverExterior;\sll)$ are given by
  the subset of basis of $H_*(\widehat{T^2};\sll)$,
  \ie
  these generators are expressed as the chains
  $\widehat{T^2}$, $\widehat{\mu}$ and $\bpt$
  tensored with bases of $V_2$, $\sll$ and $V_1$.
\end{remark}

We also give the inverse of Proposition~\ref{prop:twistedhomology_coveringspaces} needed later.
\begin{proposition}
  \label{prop:basis_twistedhomology_coverExterior}
  Under the same notations of Proposition~\ref{prop:twistedhomology_coveringspaces}, 
  the twisted chain complex $C_*(\Sigma_2;\C_\xi)$ is acyclic if
  the twisted homology group $H_*(\coverExterior;\sll)$ is generated as follows:
  $$
  H_*(\coverExterior;\sll) \simeq
  \begin{cases}
    V_2 \otimes \widehat{T^2} & (*=2) \\
    \sll \otimes \widehat{\mu} & (*=1) \\
    V_1 \otimes \bpt & (*=0) \\
    \bm{0} & otherwise.
  \end{cases}
  $$  
\end{proposition}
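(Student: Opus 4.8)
This proposition is the converse of Proposition~\ref{prop:twistedhomology_coveringspaces}: there we assumed $C_*(\Sigma_2;\C_\xi)$ acyclic and computed $H_*(\coverExterior;\sll)$; now we assume the stated form of $H_*(\coverExterior;\sll)$ and must conclude acyclicity of $C_*(\Sigma_2;\C_\xi)$. The plan is to run the Mayer--Vietoris sequence~\eqref{eqn:Mayer_Vietoris} in the reverse direction, feeding in the hypothesized homology of $\coverExterior$ together with the already-known homology of $\widehat{T^2}$ and $D^2\times S^1$ from Lemma~\ref{lemma:homology_doubleCoverTorus_solidTorus}, to pin down $H_*(\Sigma_2;\sll)$. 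Then I would invoke the decomposition of Lemma~\ref{lemma:decomp_twistedchain_DB}, $C_*(\Sigma_2;\sll)=C_*(\Sigma_2;\C)\oplus C_*(\Sigma_2;\C_\xi)\oplus C_*(\Sigma_2;\C_{\bar\xi})$, to isolate the $\C_\xi$-summand and read off its acyclicity.

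First I would assemble the input data. By hypothesis $H_2(\coverExterior;\sll)=V_2\otimes\widehat{T^2}$, $H_1(\coverExterior;\sll)=\sll\otimes\widehat{\mu}$, $H_0(\coverExterior;\sll)=V_1\otimes\bpt$; from Lemma~\ref{lemma:homology_doubleCoverTorus_solidTorus} I have the full homology of the boundary torus and the glued solid torus. The key observation is that the generators of $H_*(\coverExterior;\sll)$ are precisely the images of the chains $\widehat{T^2}$, $\widehat{\mu}$, $\bpt$ carrying the correct coefficient subspaces, so the map $H_*(\widehat{T^2};\sll)\to H_*(\coverExterior;\sll)\oplus H_*(D^2\times S^1;\sll)$ in~\eqref{eqn:Mayer_Vietoris} is well understood degree by degree. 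In particular the longitude class $\sll\otimes\lambda$ in $H_1(\widehat{T^2})$ maps isomorphically onto $\sll\otimes\lambda$ in $H_1(D^2\times S^1)$ (it bounds the meridian disk after gluing), while the $\widehat{T^2}$ and $\widehat{\mu}$ classes map onto the prescribed generators of $H_*(\coverExterior)$. Chasing these identifications through~\eqref{eqn:Mayer_Vietoris} forces $H_*(\Sigma_2;\sll)\simeq V_1$ in degrees $0$ and $3$ and vanishing otherwise — exactly the output that Proposition~\ref{prop:twistedhomology_coveringspaces} produced in the forward direction.

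Having established $H_*(\Sigma_2;\sll)\simeq V_1$ concentrated in degrees $0,3$, I would then observe that the $V_1$-summand is accounted for entirely by $C_*(\Sigma_2;\C)$ (since $V_1$ is the coefficient subspace on which $\pi_1(\Sigma_2)$ acts trivially, giving ordinary homology $H_*(\Sigma_2;\C)$ with $H_0=H_3=\C$). Therefore in the decomposition of Lemma~\ref{lemma:decomp_twistedchain_DB} the contributions of $C_*(\Sigma_2;\C_\xi)$ and $C_*(\Sigma_2;\C_{\bar\xi})$ to $H_*(\Sigma_2;\sll)$ must both vanish; since $\bar\xi=\xi^{-1}$ is the complex conjugate, these two summands have conjugate homology and so vanish together, giving $H_*(\Sigma_2;\C_\xi)=\bm{0}$, i.e. acyclicity.

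The main obstacle is the exactness bookkeeping in the middle of the Mayer--Vietoris sequence: I must verify that the connecting and inclusion-induced maps are compatible with the coefficient splitting $\sll=V_1\oplus V_2$, so that dimensions match and no spurious homology survives in $H_*(\Sigma_2;\sll)$. Concretely, the delicate point is confirming that the class $\sll\otimes\widehat{\mu}\in H_1(\coverExterior)$ and the boundary image of $V_2\otimes\widehat{T^2}\in H_2(\coverExterior)$ interact with the solid-torus homology exactly so as to kill all of $H_1(\Sigma_2;\sll)$ and $H_2(\Sigma_2;\sll)$; this is the step where the specific identification of the longitude as the meridian-disk boundary (and the triviality of the restricted representation on $\pi_1(\partial\coverExterior)$ from Remark~\ref{remark:rest_adjoint_rep}) does the real work.
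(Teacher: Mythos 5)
Your proposal is correct and follows essentially the same route as the paper, whose proof of Proposition~\ref{prop:basis_twistedhomology_coverExterior} is simply the remark that it follows from the Mayer--Vietoris sequence~\eqref{eqn:Mayer_Vietoris} and Lemma~\ref{lemma:homology_doubleCoverTorus_solidTorus}; you have merely written out the diagram chase and the final splitting step via Lemma~\ref{lemma:decomp_twistedchain_DB} that the paper leaves implicit.
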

\begin{proof}
  It also follows from the Mayer--Vietoris exact sequence~\eqref{eqn:Mayer_Vietoris}
  and Lemma~\ref{lemma:homology_doubleCoverTorus_solidTorus}.
\end{proof}

We can derive the Reidemeister torsion of $\Sigma_2$ with the representation $\I \oplus (\xi \oplus \xi^{-1})$
from the short exact sequence~\eqref{eqn:short_exact_sll} and
bases of homology group as in Lemma~\ref{lemma:homology_doubleCoverTorus_solidTorus}.

\begin{proposition}
  \label{prop:Torsion_Sigma2}
  Let $\rho$ be an irreducible metabelian $\SL$-representation of $\knotgroup$
  and $\rho'$ another one as in Lemma~\ref{lemma:adjoint_metabelian}.
  We suppose that 
  the twisted chain complex $C_*(\Sigma_2;\xi)$ is acyclic.
  Then the absolute value of Reidemeister torsion for $\Sigma_2$ and $\xi$ is 
  expressed as 
  $$ |\Tor{\Sigma_2}{\xi}|^2
  = \frac{1}{|H_1(\Sigma_2;\Z)|} \cdot
  |\Tor{\coverExterior}{\sll} \cdot \TorMV{\hbox{\ref{eqn:Mayer_Vietoris}}}|
  $$
  where the Mayer--Vietoris exact sequence~\eqref{eqn:Mayer_Vietoris}
  is equipped with bases of twisted homology groups $H_*(\widehat{T^2};\sll)$ and $H_*(\coverExterior;\sll)$
  as in Lemma~\ref{lemma:homology_doubleCoverTorus_solidTorus}.
\end{proposition}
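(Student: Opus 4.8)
The plan is to extract the identity from the multiplicativity of Reidemeister torsion applied to the short exact sequence~\eqref{eqn:short_exact_sll}, and then to evaluate the resulting factors one by one. First I would equip the three based complexes in~\eqref{eqn:short_exact_sll} with compatible bases in the sense of Milnor's theorem --- the basis of the middle term being the image of the sub-basis together with a lift of the quotient basis --- and with the homology bases furnished by Lemma~\ref{lemma:homology_doubleCoverTorus_solidTorus} and Proposition~\ref{prop:twistedhomology_coveringspaces}. Milnor's multiplicativity theorem then yields, up to sign,
$$
\Tor{\coverExterior}{\sll} \cdot \Tor{D^2 \times S^1}{\sll}
= \Tor{\widehat{T^2}}{\sll} \cdot \Tor{\Sigma_2}{\sll} \cdot \TorMV{\mathcal{H}},
$$
where $\mathcal{H}$ denotes the Mayer--Vietoris sequence~\eqref{eqn:Mayer_Vietoris} regarded as an acyclic based complex. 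Passing to absolute values and solving for $|\Tor{\Sigma_2}{\sll}|$ is the skeleton of the argument; the paper's factor $\TorMV{\eqref{eqn:Mayer_Vietoris}}$ is the torsion of $\mathcal{H}$ in the indexing convention that places it in the numerator of the final expression.

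Next I would dispose of the two boundary factors. By Remark~\ref{remark:rest_adjoint_rep} the pull-back $p^*(Ad \circ \rho')$ restricts to the trivial representation on $\pi_1(\widehat{T^2})$ and on $\pi_1(D^2 \times S^1)$, so by the lemma identifying these complexes with $\sll \otimes_\C C_*(T^2;\C)$ and $\sll \otimes_\C C_*(D^2 \times S^1;\C)$, together with the geometric homology bases of Lemma~\ref{lemma:homology_doubleCoverTorus_solidTorus}, the underlying cellular complexes have vanishing boundary operators. Hence each torsion is a product of signs, and $|\Tor{\widehat{T^2}}{\sll}| = |\Tor{D^2 \times S^1}{\sll}| = 1$.

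Then I would split the central factor $\Tor{\Sigma_2}{\sll}$. By Lemma~\ref{lemma:decomp_twistedchain_DB} and multiplicativity of torsion over direct summands,
$$
\Tor{\Sigma_2}{\sll}
= \Tor{\Sigma_2}{\C} \cdot \Tor{\Sigma_2}{\C_\xi} \cdot \Tor{\Sigma_2}{\C_{\bar \xi}},
$$
where the $\C$-summand carries the trivial local system on $V_1$. Since $\bar\xi = \xi^{-1}$ because $H_1(\Sigma_2;\Z)$ is finite, the last factor is the complex conjugate of the middle one, so $|\Tor{\Sigma_2}{\C_\xi} \cdot \Tor{\Sigma_2}{\C_{\bar\xi}}| = |\Tor{\Sigma_2}{\xi}|^2$. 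For the remaining trivial-coefficient factor I would invoke the classical identification of the Reidemeister torsion of a rational homology sphere (with trivial $\C$-coefficients and the homology bases given by the base point and the fundamental class) with the order of its first integral homology; since the only torsion in the integral homology of the closed oriented $3$-manifold $\Sigma_2$ lies in degree one, this gives $|\Tor{\Sigma_2}{\C}| = |H_1(\Sigma_2;\Z)|$. Substituting the evaluations of this and the previous paragraph into the multiplicativity identity and solving for $|\Tor{\Sigma_2}{\xi}|^2$ produces the asserted formula.

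I expect the difficulty to be bookkeeping rather than conceptual. The two points to verify carefully are: that the bases across~\eqref{eqn:short_exact_sll} are genuinely compatible, so that no stray base-change determinant contaminates Milnor's theorem; and the precise indexing convention for $\mathcal{H}$, which is exactly what forces $\TorMV{\eqref{eqn:Mayer_Vietoris}}$ into the numerator. The other delicate input is the evaluation $|\Tor{\Sigma_2}{\C}| = |H_1(\Sigma_2;\Z)|$: it holds only for the integral homology bases --- the base point and the fundamental class --- which are precisely the generators of the $V_1$-summand recorded in Proposition~\ref{prop:twistedhomology_coveringspaces}, and any rescaling of these bases would rescale the final answer by the corresponding determinant.
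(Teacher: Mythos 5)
Your proposal reproduces the paper's proof essentially step for step: the same application of the multiplicativity lemma to the short exact sequence $(\mathcal{S})$, the same evaluation of $\Tor{\widehat{T^2}}{\sll}$ and $\Tor{D^2\times S^1}{\sll}$ as $\pm 1$ via the triviality of the restricted representation, the same three--fold splitting $\Tor{\Sigma_2}{\sll} = \Tor{\Sigma_2}{\C}\Tor{\Sigma_2}{\C_\xi}\Tor{\Sigma_2}{\C_{\bar\xi}}$ with $|\Tor{\Sigma_2}{\C}| = |H_1(\Sigma_2;\Z)|$ and the conjugate pairing of the last two factors. The only cosmetic difference is the indexing convention for the torsion of the acyclic Mayer--Vietoris complex, which you correctly flag: the paper's form of the multiplicativity identity places $\TorMV{\mathcal{H}}$ on the same side as $\Tor{\coverExterior}{\sll}$, which is what puts it in the numerator of the final formula.
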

\begin{proof}
  Applying the Multiplicativity property of Reidemeister torsion
  (we refer to~\cite[Proposition~$0.11$]{Porti:1997} and \cite[Proposition~$2.4.4$]{YY1})
  to the short exact sequence~\eqref{eqn:short_exact_sll},
  we have the following equality of Reidemeister torsions up to a sign:
  \begin{equation}
    \label{eqn:MultiplicativePropTorsion}
    \Tor{\Sigma_2}{\sll}\Tor{\widehat{T^2}}{\sll}=
    \pm \Tor{\coverExterior}{\sll}\Tor{D^2 \times S^1}{\sll}\TorMV{\hbox{\ref{eqn:Mayer_Vietoris}}}
  \end{equation}
  Here the Reidemeister torsions for $\widehat{T^2}$, $\coverExterior$ and $D^2 \times S^1$ are defined
  by non--acyclic chain complexes.
  By direct computation,
  one can show the Reidemeister torsions of $\widehat{T^2}$ and $D^2 \times S^1$ are equal to $\pm 1$
  for bases of $H_*(\widehat{T^2};\sll)$ and $H_*(\coverExterior;\sll)$ as in
  Lemma~\ref{lemma:homology_doubleCoverTorus_solidTorus}.
  
  Next the Reidemeister torsion $\Tor{\Sigma_2}{\sll}$ turns into the product
  $$
  \Tor{\Sigma_2}{\C}\Tor{\Sigma_2}{\C_\xi}\Tor{\Sigma_2}{\C_{\bar \xi}}
  $$
  by the decomposition given in Lemma~\ref{lemma:decomp_twistedchain_DB}. 
  Moreover the first factor $\Tor{\Sigma_2}{\C}$ turns out to be the order $\pm |H_1(\Sigma_2;\Z)|$
  by \cite[Proposition~$3.10$]{Porti:MayberryMurasugi} and
  Proposition~\ref{prop:twistedhomology_coveringspaces}.
  The last factor $\Tor{\Sigma_2}{\C_{\bar \xi}}$ coincides with the complex conjugate of
  $\Tor{\Sigma_2}{\C_{\xi}}$ from the definition.
  Combining these results, we can rewrite the equality~\eqref{eqn:MultiplicativePropTorsion} as
  $$
  |H_1(\Sigma_2;\Z)| \cdot |\Tor{\Sigma_2}{\C_\xi}|^2 =
  \pm \Tor{\coverExterior}{\sll} \cdot \TorMV{\hbox{\ref{eqn:Mayer_Vietoris}}}.
  $$
\end{proof}

\begin{remark}
  The Reidemeister torsion $\Tor{\coverExterior}{\sll}$ and
  $\TorMV{\hbox{\ref{eqn:Mayer_Vietoris}}}$ depend on the choice of
  basis of $H_*(\coverExterior;\sll)$.
  From the last equality of the proof of Proposition~\ref{prop:Torsion_Sigma2},
  it follows that the product is
  independent of the choice of basis of the twisted homology group
  $H_*(\coverExterior;\sll)$ and
  gives the topological invariant of $\Sigma_2$.
\end{remark}

In particular, when we choose the basis of $H_*(\coverExterior;\sll)$
as in Proposition~\ref{prop:basis_twistedhomology_coverExterior},
the Reidemeister torsion $\TorMV{\hbox{\ref{eqn:Mayer_Vietoris}}}$ turns into $\pm 1$.
However we will consider 
$\Tor{\coverExterior}{\sll}$ and $\TorMV{\hbox{\ref{eqn:Mayer_Vietoris}}}$
for another basis of $H_1(\coverExterior;\sll)$,
which is given by the lift of a preferred longitude.

\subsection{Torsion of cyclic covers over knot exteriors}
\label{subsect:Torsion_cyclic_covers_knotexteriors}
The purpose of this subsection is to compute the torsion $\Tor{\coverExterior}{\sll}$
using the twisted Alexander polynomial of $\knotexterior$.
We need to observe the twisted homology group of $\coverExterior$
to apply such a method, which was developed in~\cite{YY1, DuboisYamaguchi:derivativeFormula}.

Roughly speaking, the Reidemeister torsion for the non--acyclic twisted chain complex 
$C_*(\coverExterior;\sll)$ agrees with 
the torsion of the quotient $C_*(\coverExterior;\sll) / C'_*$
where $C'_*$ is a subchain complex
which is isomorphic to the twisted homology group $H_*(\coverExterior;\sll)$.
When this decomposition 
$C_*(\coverExterior;\sll) = C'_* \oplus C_*(\coverExterior;\sll) / C'_*$
also defines chain complexes in the coefficient $\C(t) \otimes \sll$,
we can express the torsion of the quotient complex as 
the rational function given by the twisted Alexander polynomial for $\coverExterior$
and a polynomial corresponding to $C'_*$ with the coefficient $\C(t) \otimes \sll$
and recover the torsion of $C_*(\coverExterior;\sll) / C'_*$,
which coincides with the torsion of $C_*(\coverExterior;\sll)$,
by evaluating this rational function at $t=1$.

To do this, we need to choose a suitable subchain complex $C'_*$ in $C_*(\coverExterior;\sll)$.
This problem is related to the possibility on the choice of basis of $H_*(\coverExterior;\sll)$.
We begin with a decomposition of $C_*(\coverExterior;\sll)$ to observe
 $H_*(\coverExterior;\sll)$ in detail.

We denote by $G$ the covering transformation group of $\coverExterior$, 
\ie
$G=\langle g \,|\, g^2 =1 \rangle$.
By the similar way of \cite[Lemma~$4.3$ and Eq.~$4$--$8$]{DuboisYamaguchi:abelianCovering}, 
we can expressed the twisted chain complex $C_*(\coverExterior;\sll)$ as the direct sum of 
those of the base space $\knotexterior$.
\begin{lemma}
  \label{lemma:decomp_twistedcomplex_coverExterior}
  Let $f_{\pm 1}$ denote $(1 \pm g)/2$ in the group ring $\C[G]$.
  Then the twisted chain complex $C_*(\coverExterior;\sll)$ is decomposed as
  \begin{equation}
    \label{eqn:decompo_chain_cpx_coverExterior}
    C_*(\coverExterior;\sll) \simeq
    C_*(\knotexterior; \sll \otimes \C[f_1]) \oplus C_*(\knotexterior; \sll \otimes \C[f_{-1}])
  \end{equation}
  where $C_*(\knotexterior; \sll \otimes \C[f_{\pm 1}])$ is defined by
  $(\sll \otimes_{\C} \C[f_{\pm 1}]) \otimes_{\Z[\knotgroup]} C_*(\univcover{\knotexterior;\Z})$
  via $Ad \circ \rho'$ and the projection $\knotgroup \to \knotgroup / \liftknotgroup = G$.

  Moreover we can identify the chain complex $C_*(\knotexterior; \sll \otimes \C[f_1])$
  with the twisted chain complex $C_*(\knotexterior;\sll)$ defined by $Ad \circ \rho'$
  and $C_*(\knotexterior; \sll \otimes \C[f_{-1}])$
  with the twisted chain complex of $\knotexterior$ by $(-1)^{[\,\cdot\,]} \cdot Ad \circ \rho'$.
\end{lemma}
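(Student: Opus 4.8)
The plan is to adapt the idempotent decomposition for finite cyclic covers of \cite[Lemma~$4.3$]{DuboisYamaguchi:abelianCovering} to coefficients in $\sll$ twisted by $Ad\circ\rho'$. First I would record that the universal cover of $\coverExterior$ is $\univcover{\knotexterior}$, so that $C_*(\univcover{\knotexterior};\Z)$ is a free left $\Z[\knotgroup]$-module on lifts of the cells of $\knotexterior$, with $G=\knotgroup/\liftknotgroup$ acting through the deck transformations. The crucial step is a change-of-rings (Shapiro-type) identification
\begin{equation*}
  C_*(\coverExterior;\sll)
  =\sll\otimes_{\Z[\liftknotgroup]}C_*(\univcover{\knotexterior};\Z)
  \;\cong\;(\sll\otimes_{\C}\C[G])\otimes_{\Z[\knotgroup]}C_*(\univcover{\knotexterior};\Z),
\end{equation*}
in which $\sll\otimes_\C\C[G]$ carries the right $\Z[\knotgroup]$-module structure induced by $Ad\circ\rho'$ on $\sll$ and by the projection $\knotgroup\to\knotgroup/\liftknotgroup=G$ on $\C[G]$. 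Taking $\{1,\mu\}$ as coset representatives of $\liftknotgroup$ in $\knotgroup$ makes this isomorphism explicit and matches the cellular chains of $\coverExterior$ with the two $G$-translates of the lifted cells of $\knotexterior$.

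Next I would introduce the idempotents $f_{\pm1}=(1\pm g)/2$, which satisfy $f_1+f_{-1}=1$, $f_1 f_{-1}=0$ and $g\cdot f_{\pm1}=\pm f_{\pm1}$, giving $\C[G]=\C[f_1]\oplus\C[f_{-1}]$ with $\C[f_{\pm1}]$ the $(\pm1)$-eigenspace of $g$. As $\C[G]$ is commutative these idempotents are central, so multiplication by any $\bar\gamma$ preserves each summand and the splitting $\sll\otimes_\C\C[G]=(\sll\otimes\C[f_1])\oplus(\sll\otimes\C[f_{-1}])$ is $\Z[\knotgroup]$-linear. Since the differential of the right-hand complex above is $\mathrm{id}\otimes\partial$ with $\partial$ a $\Z[\knotgroup]$-map, the decomposition of the coefficient module is compatible with the boundary operators and hence yields the splitting \eqref{eqn:decompo_chain_cpx_coverExterior} as chain complexes.

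Finally I would identify each summand by reading off the induced $\knotgroup$-action. On $\sll\otimes\C[f_1]$ the generator $g$ acts as $+1$, so $\gamma$ acts only through $Ad_{\rho'(\gamma)}$, recovering $C_*(\knotexterior;\sll)$ defined by $Ad\circ\rho'$. On $\sll\otimes\C[f_{-1}]$ the generator $g$ acts as $-1$, and because the image of $\gamma$ in $G=\langle g\mid g^2=1\rangle$ is $g^{[\gamma]}$, the element $\gamma$ acts through $(-1)^{[\gamma]}\,Ad_{\rho'(\gamma)}$, recovering $C_*(\knotexterior;\sll)$ defined by $\signrep\cdot Ad\circ\rho'$. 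I expect the main obstacle to be the change-of-rings identification of the first paragraph: one must respect the paper's convention of converting left modules into right modules via the inverse representation and check that the coset bookkeeping reproduces the cellular chain complex of $\coverExterior$, not merely an abstract $\Z[\knotgroup]$-module isomorphism. Once this is fixed, the central idempotent splitting and the eigenvalue computation are purely formal.
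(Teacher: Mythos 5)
Your proposal is correct and follows essentially the same route as the paper: the change-of-rings identification $C_*(\coverExterior;\sll)\simeq(\sll\otimes_\C\C[G])\otimes_{\Z[\knotgroup]}C_*(\univcover{\knotexterior};\Z)$ (which the paper imports from \cite[Lemma~$4.3$]{DuboisYamaguchi:abelianCovering} and you re-derive via coset representatives), followed by the idempotent splitting $\C[G]=\C[f_1]\oplus\C[f_{-1}]$ and the eigenvalue computation identifying the two summands with the $Ad\circ\rho'$ and $\signrep\cdot Ad\circ\rho'$ complexes. The only difference is that you spell out the Shapiro-type step that the paper delegates to a citation.
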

\begin{proof}
  It follows from the same way of~\cite[Lemma~$4.3$]{DuboisYamaguchi:abelianCovering} that 
  $$
  C_*(\coverExterior;\sll)
  \simeq (\sll \otimes_{\C} \C[G]) \otimes_{\Z[\knotgroup]} C_*(\univcover{\knotexterior};\Z)  
  $$
  where we take the tensor product $\otimes_{\Z[\knotgroup]}$ in
  the right hand side under the adjoint representation $Ad \circ
  \rho'$ and the projection $\knotgroup \to G = \knotgroup /
  \liftknotgroup$.  This isomorphism is given by $\Phi(x \otimes
  c) = (x \otimes 1 )\otimes c$.
  We can think of $\C[G]$ as a $2$-dimensional vector space and vectors $\{f_1, f_{-1}\}$ as a basis of $\C[G]$.
  Together with $\C[G] = \C[f_1] \oplus \C[f_{-1}]$,
  we have the decomposition of $C_*(\coverExterior;\sll)$ in our statement.

  The element $g$ of $G$ acts on $\C[G]$ linearly. The vectors $f_{\pm 1}$ are the eigenvector for the eigenvalues
  $\pm 1$ of this action.
  Therefore we can regard the chain complexes
  $(\sll \otimes \C[f_{\pm 1}]) \otimes_{\Z[\knotgroup]} C_*(\univcover{\knotexterior;\Z})$
  as the twisted chain complex of $\knotexterior$ with $\sll \otimes \C$ the coefficient defined by
  the representations $(Ad \otimes \rho') \otimes (\pm 1)^{[\,\cdot\,]}$.
  This gives the identification between
  $(\sll \otimes \C[f_{\pm 1}]) \otimes_{\Z[\knotgroup]} C_*(\univcover{\knotexterior;\Z})$
  and the twisted chain complexes of $\knotexterior$ defined by
  $Ad \circ \rho'$ and $(-1)^{[\,\cdot\,]} (Ad \circ \rho')$.
\end{proof}
\begin{proposition}
  \label{prop:decomp_twistedhomology_coverExterior}
  The twisted homology $H_*(\coverExterior;\sll)$ has the following decomposition:
  \begin{align}
    H_*(\coverExterior;\sll)
    &\simeq
    H_*(\knotexterior;\sll) \oplus H_*(\knotexterior;\sll \otimes \C_{-1})
    \label{eqn:isom_twistedhomology}\\
    &\simeq
    H_*(\knotexterior;\C) \oplus H_*(\knotexterior;\sll)^{\oplus 2}
    \label{eqn:isom_twistedhomology_knotexterior}
  \end{align}
  where $\C_{-1}$ denotes the $1$-dimensional representation $(-1)^{[\, \cdot \,]}$ of $\knotgroup$.
  Moreover the twisted homology group $H_*(\knotexterior;\sll)$ is isomorphic to $H_*(\knotexterior;V_2)$
  whose coefficient $V_2$ is the $2$-dimensional vector space for the summand $\psi_2$ in $Ad \circ \rho'$.
\end{proposition}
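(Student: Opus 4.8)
The plan is to deduce the statement directly from the chain--level splitting in Lemma~\ref{lemma:decomp_twistedcomplex_coverExterior}, and then to simplify each summand using the representation--theoretic decomposition of $Ad \circ \rho'$ together with the involution symmetry of irreducible metabelian representations. First I would pass from chains to homology in the isomorphism of Lemma~\ref{lemma:decomp_twistedcomplex_coverExterior}: since a direct sum of chain complexes computes the direct sum of homologies, the identifications of $C_*(\knotexterior;\sll \otimes \C[f_1])$ with $C_*(\knotexterior;\sll)$ via $Ad \circ \rho'$ and of $C_*(\knotexterior;\sll \otimes \C[f_{-1}])$ with $C_*(\knotexterior;\sll)$ via $(-1)^{[\,\cdot\,]} \cdot Ad \circ \rho'$ immediately give the first isomorphism~\eqref{eqn:isom_twistedhomology}, the coefficient $\sll \otimes \C_{-1}$ of the second summand being $\sll$ twisted by the extra character $(-1)^{[\,\cdot\,]}$.

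Next I would feed in Lemma~\ref{lemma:adjoint_metabelian}, which splits $Ad \circ \rho'$ as $\psi_1 \oplus \psi_2 = (-1)^{[\,\cdot\,]} \oplus C\,\sqrtrep \rho\,C^{-1}$, with $V_1 = \langle H \rangle$ carrying $\psi_1$ and $V_2 = \langle E,F\rangle$ carrying $\psi_2$. Applying this coefficient splitting to both summands of~\eqref{eqn:isom_twistedhomology}, the first summand becomes $H_*(\knotexterior;\C_{-1}) \oplus H_*(\knotexterior;V_2)$, while in the second summand the extra twist by $(-1)^{[\,\cdot\,]}$ turns $\psi_1$ into the trivial representation, since $((-1)^{[\,\cdot\,]})^2 = \I$, and turns $\psi_2$ into $(-1)^{[\,\cdot\,]}\psi_2$, giving $H_*(\knotexterior;\C) \oplus H_*(\knotexterior;(-1)^{[\,\cdot\,]}V_2)$.

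Two reductions then produce~\eqref{eqn:isom_twistedhomology_knotexterior} and the final sentence. The first is the vanishing $H_*(\knotexterior;\C_{-1}) = 0$: the coefficient $\C_{-1}$ is the one--dimensional local system $\alpha$ followed by $t \mapsto -1$, whose twisted homology is acyclic exactly because the knot determinant $\Delta_K(-1)$ is a nonzero (indeed odd) integer; this is the standard evaluation of the Alexander invariant at $t=-1$. This kills the $V_1$ contribution and yields the ``moreover'' claim $H_*(\knotexterior;\sll) \simeq H_*(\knotexterior;V_2)$ for the $Ad \circ \rho'$ coefficient. The second reduction identifies the two $V_2$--homologies: by Proposition~\ref{prop:fixed_points_involution} every irreducible metabelian $\rho$ is conjugate to $\signrep \rho$, say $\signrep \rho = D \rho D^{-1}$, and since the scalar $\sqrtrep$ commutes with $D$ one gets $(-1)^{[\,\cdot\,]}\psi_2 = \sqrtrep\,\signrep\rho = D\,\psi_2\,D^{-1}$, so $(-1)^{[\,\cdot\,]}V_2$ is conjugate to $V_2$ and $H_*(\knotexterior;(-1)^{[\,\cdot\,]}V_2) \simeq H_*(\knotexterior;V_2)$ by conjugation invariance of twisted homology. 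Combining, $H_*(\coverExterior;\sll) \simeq H_*(\knotexterior;V_2) \oplus \bigl(H_*(\knotexterior;\C) \oplus H_*(\knotexterior;V_2)\bigr)$, which, after the identification $H_*(\knotexterior;\sll) \simeq H_*(\knotexterior;V_2)$, is~\eqref{eqn:isom_twistedhomology_knotexterior}.

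The main obstacle is the second reduction: recognizing that the extra $(-1)^{[\,\cdot\,]}$--twist on the $f_{-1}$ eigenspace does not change the relevant two--dimensional homology. This is precisely where the metabelian hypothesis is essential, through the fixed--point property of the involution $\hat\iota$ in Proposition~\ref{prop:fixed_points_involution}; without $\rho \simeq \signrep\rho$ the two summands $V_2$ and $(-1)^{[\,\cdot\,]}V_2$ would a priori give different homologies. The vanishing of $H_*(\knotexterior;\C_{-1})$ is routine given $\Delta_K(-1) \neq 0$, but it is the step that makes the one--dimensional $\psi_1$ summand invisible and lets the answer be expressed purely in terms of $H_*(\knotexterior;\C)$ and $H_*(\knotexterior;\sll)$.
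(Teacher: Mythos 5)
Your proposal is correct and follows essentially the same route as the paper: the first isomorphism from the chain--level splitting of Lemma~\ref{lemma:decomp_twistedcomplex_coverExterior}, the identification of the $(-1)^{[\,\cdot\,]}$--twisted $V_2$ summand via the conjugacy $\rho \simeq \signrep\rho$ from Proposition~\ref{prop:fixed_points_involution}, and the vanishing $H_*(\knotexterior;\C_{-1})=\bm{0}$ to remove the $V_1$ contribution. Your justification of that vanishing via $\Delta_K(-1)\neq 0$ supplies the detail the paper leaves as ``direct calculation,'' but the argument is otherwise the same.
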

\begin{proof}
  The first isomorphism follows from Lemma~\ref{lemma:decomp_twistedcomplex_coverExterior}.
  The second summand in the right hand side of~\eqref{eqn:isom_twistedhomology}
  is determined by the twisted chain complex corresponding to
  the representation
  $(-1)^{[\, \cdot \,]} (Ad \circ \rho') = \I \oplus (\sqrtrep \cdot (-1)^{[\, \cdot\,]} \rho)$.
  The $\SL$-representation $(-1)^{[\, \cdot\,]} \rho$ is conjugate to itself $\rho$
  by Proposition~\ref{prop:fixed_points_involution}.
  Thus $(-1)^{[\, \cdot \,]} (Ad \circ \rho')$ is conjugate to
  $\I \oplus \sqrtrep \rho = \I \oplus \psi_2$.
  This conjugation between representations induces the isomorphism from 
  twisted homology group $H_*(\knotexterior;\sll \otimes \C_{-1})$ to
  $H_*(\knotexterior;\C) \oplus H_*(\knotexterior; V_2)$.

  We need to show the isomorphism between $H_*(\knotexterior)$ and $H_*(\knotexterior;V_2)$
  for the isomorphism~\eqref{eqn:isom_twistedhomology_knotexterior}.
  The equality $Ad \circ \rho' = \signrep \oplus \sqrtrep \rho$ gives the isomorphism:
  $$
  H_*(\knotexterior;\sll) \simeq H_*(\knotexterior;V_1) \oplus H_*(\knotexterior;V_2)
  $$
  and the coefficient $V_1$ is the vector space $\C_{-1}$.
  By direct calculation, we can see that the homology group $H_*(\knotexterior;\C_{-1}) =\bm{0}$ which proves the proposition.
\end{proof}

Proposition~\ref{prop:decomp_twistedhomology_coverExterior} shows that 
the twisted homology group
$H_*(\coverExterior;\sll)$ is determined by $H_*(\knotexterior;\sll)$.
The possibilities on the choices of basis for $H_*(\coverExterior;\sll)$
is also determined by the possibilities for $H_*(\knotexterior;\sll)$.
We need to consider which cycles gives a basis of $H_1(\knotexterior;\sll)$.

\medskip

We focus on cycles given by the preferred longitude in
$C_1(\knotexterior;\sll)$ for the problem related to the choices of
basis for $H_1(\coverExterior;\sll)$.  This means that we assume that
an irreducible metabelian $\SL$-representation $\rho'$ is
$\lambda$-regular where $\lambda$ is the preferred longitude on
$\boundaryTorus$.

To describe the basis of $H_*(\knotexterior;\sll)$, 
we will use the following notations about eigenvectors of $\rho(\mu)$, which also give
eigenvectors of $Ad \circ \rho' (\mu)$.
\begin{definition}
  \label{def:egenvectors}
  Let $\rho$ and $\rho'$ denote irreducible metabelian $\SL$-representations of $\knotgroup$
  which satisfy that $Ad \circ \rho' = \signrep \oplus \sqrtrep \rho$ on the ordered basis
  $\{H, E, F\}$ of $\sll$ as in Lemma~\ref{lemma:adjoint_metabelian}.
  For the matrix $\rho(\mu) =
  \begin{pmatrix}
    0 & \sqrt{-1} \\
    \sqrt{-1} & 0
  \end{pmatrix}$ acting on ${}_\C \langle E, F \rangle$,
  we denote by $P^\rho$ and $Q^\rho$ eigenvectors $\frac{1}{\sqrt{2}}(E-F)$ and $\frac{1}{\sqrt{2}}(E+F)$
  for the eigenvalues $\mp \sqrt{-1}$.
\end{definition}
\begin{remark}
  The vectors $P^\rho$ and $Q^{\rho}$ are eigenvectors of $Ad_{\rho'(\mu)}$ for the eigenvalues $\pm 1$.
\end{remark}
\begin{proposition}
  \label{prop:lambdabasis_twistedhomology}
  Under the assumption that $\rho'$ is $\lambda$-regular,
  we can choose the following basis of $H_*(\coverExterior;\sll)$:
  \begin{equation}
    \label{eqn:basis_twistedhomology_coverExterior}
    H_*(\coverExterior;\sll) \simeq 
    \begin{cases}
      {}_{\C}\langle
      P^{\rho} \otimes \widehat{T^2},\,
      Q^{\rho} \otimes \widehat{T^2}
      \rangle & (*=2) \\
      {}_{\C}\langle
      H \otimes \widehat{\mu},\,
      P^{\rho} \otimes \lambda,\,
      Q^{\rho} \otimes \lambda
      \rangle & (*=1) \\
      {}_{\C}\langle
      H \otimes \bpt
      \rangle & (*=0)
    \end{cases}
  \end{equation}
  where $H$ generates the eigenspace $V_1$ and
  $P^\rho$ and $Q^\rho$ are the eigenvectors as in Definition~\ref{def:egenvectors}.
\end{proposition}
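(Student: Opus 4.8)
The plan is to pull the whole computation back to the base exterior $\knotexterior$ through the eigenspace decomposition already in hand, and then to read off a basis cell by cell. Combining Lemma~\ref{lemma:decomp_twistedcomplex_coverExterior} with Proposition~\ref{prop:decomp_twistedhomology_coverExterior} I have
$$
H_*(\coverExterior;\sll)\simeq H_*(\knotexterior;V_2)\oplus H_*(\knotexterior;\C)\oplus H_*(\knotexterior;V_2),
$$
in which the first and third summands are the $f_1$- and $f_{-1}$-parts carrying the representations $\psi_2$ and $\signrep\psi_2$, while the middle $\C$ is the $V_1$-component of the $f_{-1}$-part (its $f_1$-component dies because $H_*(\knotexterior;\C_{-1})=\bm{0}$). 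Throughout I will use the eigenvalue data $Ad_{\rho'(\mu)}H=-H$, $Ad_{\rho'(\mu)}P^\rho=P^\rho$ and $Ad_{\rho'(\mu)}Q^\rho=-Q^\rho$ coming from Lemma~\ref{lemma:adjoint_metabelian} and Definition~\ref{def:egenvectors}.

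First I would match cells to summands. Writing $\widehat{\mu}=m+g\cdot m$ and $\widehat{T^2}=T^2+g\cdot T^2$ and expanding $1=f_1+f_{-1}$, $g=f_1-f_{-1}$, a chain $v\otimes\widehat{\mu}$ splits as $(v+Ad_{\rho'(\mu)}v)$ over $f_1$ plus $(v-Ad_{\rho'(\mu)}v)$ over $f_{-1}$ (with the base meridian cell), and similarly for $\widehat{T^2}$. Hence $H\otimes\widehat{\mu}$ lands purely in the middle $\C$-summand, where it is twice the meridian class and so a generator of $H_1(\knotexterior;\C)$; likewise $H\otimes\bpt$ generates $H_0(\knotexterior;\C)=H_0(\coverExterior;\sll)$. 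In degree two, $P^\rho\otimes\widehat{T^2}$ lands purely in the $\psi_2$-summand and $Q^\rho\otimes\widehat{T^2}$ purely in the $\signrep\psi_2$-summand, producing the two generators $[P^\rho\otimes T^2]$ and $[Q^\rho\otimes T^2]$ furnished (via the conjugacy below) by Eq.~\eqref{eqn:basis_gamma_reg}. This disposes of degrees $0$ and $2$.

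The degree-one statement is the real content, and it is where $\lambda$-regularity is used. Since $\rho'(\lambda)=\I$, the longitude lifts and $v\otimes\lambda$ contributes the same class to the $f_1$- and $f_{-1}$-parts, so each of $[P^\rho\otimes\lambda]$ and $[Q^\rho\otimes\lambda]$ a priori has both a $\psi_2$- and a $\signrep\psi_2$-component. I would evaluate these on the boundary torus. For $\psi_2$ the meridian eigenvalues are $+1$ on $P^\rho$ and $-1$ on $Q^\rho$, so a Künneth computation on $\boundaryTorus$ gives $H_1(\boundaryTorus;V_2)={}_\C\langle P^\rho\otimes\mu,\,P^\rho\otimes\lambda\rangle$ whereas $Q^\rho\otimes\lambda$ already bounds; thus the image of $[Q^\rho\otimes\lambda]$ in the $\psi_2$-summand vanishes, while $\lambda$-regularity of $\rho'$ makes $H_1(\lambda;V_2)\to H_1(\knotexterior;V_2)$ surjective, so $[P^\rho\otimes\lambda]$ is a nonzero generator there (this is exactly Eq.~\eqref{eqn:basis_gamma_reg} with $\gamma=\lambda$). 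Because $\signrep\psi_2$ is conjugate to $\psi_2$ by Proposition~\ref{prop:fixed_points_involution}, the meridian eigenvalues swap: $Q^\rho$ becomes the invariant eigenvector for $\signrep\psi_2$, the same regularity transports through the conjugacy, and symmetrically $[P^\rho\otimes\lambda]$ bounds while $[Q^\rho\otimes\lambda]$ generates the third summand.

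Collecting the three degree-one classes, $H\otimes\widehat{\mu}$ spans the middle $\C$-summand, $[P^\rho\otimes\lambda]$ the $\psi_2$-summand and $[Q^\rho\otimes\lambda]$ the $\signrep\psi_2$-summand, and the vanishing of the cross terms makes the passage from the reference basis $\{H\otimes\widehat{\mu},\,P^\rho\otimes\widehat{\mu},\,Q^\rho\otimes\widehat{\mu}\}$ of Proposition~\ref{prop:basis_twistedhomology_coverExterior} block-diagonal with nonzero diagonal entries; hence $\{H\otimes\widehat{\mu},\,P^\rho\otimes\lambda,\,Q^\rho\otimes\lambda\}$ is again a basis, completing the proof. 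I expect the one genuinely delicate point to be this boundary-torus bookkeeping: verifying that the non-invariant meridian eigenvector tensored with $\lambda$ bounds in each summand, and tracking the eigenvector interchange induced by the conjugacy $\signrep\psi_2\cong\psi_2$; once these are in place, everything else is forced by the eigenvalue computation and the dimension count of Proposition~\ref{prop:twistedhomology_coveringspaces}.
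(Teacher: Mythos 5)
Your proof is correct and follows essentially the same route as the paper: decompose $C_*(\coverExterior;\sll)$ into the $f_{\pm 1}$-summands via $\Phi$, identify them with the $\psi_2$-, $\C$-, and $\signrep\psi_2$-coefficient complexes on $\knotexterior$, track the meridian eigenvectors $H, P^\rho, Q^\rho$, and invoke $\lambda$-regularity (transported through the conjugacy $\signrep\psi_2\cong\psi_2$) to see that the longitude classes generate the two $V_2$-summands. In fact you are slightly more careful than the paper at the one delicate spot --- since $\lambda$ lifts to $\coverExterior$, the chain $v\otimes\lambda$ a priori hits both $f_{\pm1}$-summands, and your boundary-torus computation showing that the non-invariant eigenvector tensored with $\lambda$ bounds is exactly the justification the paper leaves implicit; only your closing appeal to the $\widehat{\mu}$-reference basis is redundant, since the three summands are already each one-dimensional by $\lambda$-regularity.
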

\begin{proof}
  By Proposition~\ref{prop:decomp_twistedhomology_coverExterior},
  we have the isomorphism
  $$
  H_*(\coverExterior;\sll) \simeq
  H_*(\knotexterior;\sll) \oplus H_*(\knotexterior;\sll \otimes \C_{-1})
  $$
  and the decomposition 
  $H_*(\knotexterior;\sll \otimes \C_{-1}) \simeq
  H_*(\knotexterior;\C) \oplus H_*(\knotexterior;V_2)$.
  We prove the cycles as in Eq.~\eqref{eqn:basis_twistedhomology_coverExterior} gives
  bases of twisted homology groups $H_*(\knotexterior;\sll)$ and
  $H_*(\knotexterior;\sll \otimes \C_{-1})$.

  We consider the image of chains under the isomorphism $\Phi$
  from $C_*(\coverExterior;\sll)$ to the direct sum of
  $C_*(\knotexterior;\sll \otimes \C[f_{\pm 1}])$ as in Eq.~\eqref{eqn:decompo_chain_cpx_coverExterior}.
  Under this decomposition, the image $P^\rho \otimes \widehat{T^2}$ is expressed as
  \begin{align*}
    \Phi(P^\rho \otimes \widehat{T^2})
    &=
    P^\rho \otimes f_1 \otimes (1+\mu) T^2 +
    P^\rho \otimes f_{-1} \otimes (1+\mu) T^2 \\
    &=
    2 P^\rho \otimes f_1 \otimes T^2.
  \end{align*}
  We also have the equality that
  $\Phi(Q^\rho \otimes \widehat{T^2}) = 2 Q^\rho \otimes f_1 \otimes T^2$.
  It follows from the $\lambda$-regularity of $\rho'$ that 
  these images of chains $P^\rho \otimes \widehat{T^2}$ and $Q^\rho \otimes \widehat{T^2}$ gives generators of
  the homology groups $H_2(\knotexterior;\sll)$ and $H_2(\knotexterior;V_2)$.
  Similarly we can show the images of $P^\rho \otimes \lambda$ and $Q^\rho \otimes \lambda$
  turns into $2P^\rho \otimes \lambda$ and $2Q^\rho \otimes \lambda$ which gives non--trivial
  homology classes.
  The the image $\Phi(H \otimes \widehat{\mu})$ turns out to be $2 H \otimes f_{-1} \otimes \mu$
  since $H$ is also an eigenvector for the eigenvalue $-1$ of $Ad \circ \rho(\mu)$.
  The cycle $H \otimes f_{-1} \otimes \mu$ gives a generator of $H_1(\knotexterior;\C)$
  in Eq.~\eqref{eqn:isom_twistedhomology_knotexterior}.
  Last it follows from the isomorphism in Proposition~\ref{prop:decomp_twistedhomology_coverExterior}
  that $H \otimes \bpt$ gives a generator of $H_0(\knotexterior;\C) \simeq H_0(\coverExterior;\sll)$.
\end{proof}

We now proceed to construct a subchain complex of $C_*(\coverExterior;\sll)$ which arises from
the basis in Proposition~\ref{prop:lambdabasis_twistedhomology}.
\begin{definition}
  \label{def:subchain_complex}
  Suppose that an irreducible metabelian representation $\rho'$ is $\lambda$-regular.
  We define the subchain complex $C'_*$ of $C_*(\coverExterior;\sll)$ as 
  \begin{gather*}
    0 \to C'_2 \to C'_1 \to C'_0 \to 0 \\
    C'_2 = 
    {}_{\C}\langle
    P^{\rho} \otimes \widehat{T^2},\,
    Q^{\rho} \otimes \widehat{T^2}
    \rangle, \quad 
    C'_1 =
    {}_{\C}\langle
    H \otimes \widehat{\mu},\,
    P^{\rho} \otimes \lambda,\,
    Q^{\rho} \otimes \lambda
    \rangle, \quad
    C'_0 =
    {}_{\C}\langle
    H \otimes \bpt
    \rangle.
  \end{gather*}
\end{definition}
Note that the restriction of boundary operators are $0$-homomorphism and 
the homology group $H_*(C'_*)$ coincides with $H_*(\coverExterior;\sll)$.
We denote by $C''_*$ the chain complex defined as
the quotient of $C_*(\coverExterior;\sll)$ by $C'_*$.
It follows that $C''_*$ is acyclic which is due to the induced homology long exact sequence 
from the short exact sequence:
\begin{equation}
  \label{eqn:short_exact_quotient}
  0 \to C'_* \to C_*(\coverExterior;\sll) \to C''_* \to 0.
\end{equation}

The Reidemeister torsion of the acyclic chain complex $C''_*$ coincides with 
the Reidemeister torsion of the non--acyclic one $C_*(\coverExterior;\sll)$.
\begin{proposition}
  \label{prop:eqn_torsions}
  We assume that $\rho'$ is $\lambda$-regular and choose a basis of
  $H_*(\coverExterior;\sll)$ as in Proposition~\ref{prop:decomp_twistedhomology_coverExterior}.
  Then we have the following equality:
  $$
  \TorCpx{C''_*} = \pm \Tor{\coverExterior}{\sll}.
  $$
\end{proposition}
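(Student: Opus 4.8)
The plan is to apply the multiplicativity property of Reidemeister torsion to the short exact sequence \eqref{eqn:short_exact_quotient} and then to check that the two correction factors it produces — the torsion of the subcomplex $C'_*$ and the torsion of the associated homology long exact sequence — are both trivial. First I would invoke the multiplicativity property (as in \cite[Proposition~$0.11$]{Porti:1997} and \cite[Proposition~$2.4.4$]{YY1}) for the sequence $0 \to C'_* \to C_*(\coverExterior;\sll) \to C''_* \to 0$. Since the basis of $C''_*$ is the one induced from the geometric cell basis of $C_*(\coverExterior;\sll)$ modulo $C'_*$, the bases on the three complexes are compatible, and, exactly as in the proof of Proposition~\ref{prop:Torsion_Sigma2}, multiplicativity yields up to sign
$$
\TorCpx{C'_*} \cdot \TorCpx{C''_*}
= \pm\, \Tor{\coverExterior}{\sll} \cdot \TorCpx{\mathcal{H}'},
$$
where $\Tor{\coverExterior}{\sll} = \TorCpx{C_*(\coverExterior;\sll)}$ is computed with respect to the $\lambda$-basis of Proposition~\ref{prop:lambdabasis_twistedhomology}, and $\mathcal{H}'$ denotes the long exact homology sequence of \eqref{eqn:short_exact_quotient} regarded as an acyclic based complex.

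Next I would evaluate the two correction factors. By Definition~\ref{def:subchain_complex} the boundary operators of $C'_*$ are all the zero homomorphism, so $H_*(C'_*) = C'_*$; taking the chain basis of $C'_*$ as the homology basis — which is precisely the chosen basis of $H_*(\coverExterior;\sll)$ — one sees that no base change occurs and hence $\TorCpx{C'_*} = \pm 1$. For the homology sequence $\mathcal{H}'$, the acyclicity of $C''_*$ (noted after Definition~\ref{def:subchain_complex}) gives $H_*(C''_*) = 0$, so $\mathcal{H}'$ degenerates into the inclusion-induced isomorphisms $H_i(C'_*) \xrightarrow{\,\cong\,} H_i(\coverExterior;\sll)$. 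Because the generators of $C'_*$ are exactly the cycles representing the chosen basis of $H_*(\coverExterior;\sll)$, each of these isomorphisms carries the chosen basis to the chosen basis, whence $\TorCpx{\mathcal{H}'} = \pm 1$.

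Substituting $\TorCpx{C'_*} = \pm 1$ and $\TorCpx{\mathcal{H}'} = \pm 1$ into the multiplicativity equation gives $\TorCpx{C''_*} = \pm\, \Tor{\coverExterior}{\sll}$, as claimed. The only point requiring real care — and thus the main bookkeeping obstacle — is the compatibility of the three bases demanded by the multiplicativity formula, together with the assertion that the inclusion $C'_* \hookrightarrow C_*(\coverExterior;\sll)$ realizes the chosen homology basis \emph{on the nose}, so that $\TorCpx{\mathcal{H}'}$ genuinely contributes only a sign. Both facts follow directly from the construction of $C'_*$ in Definition~\ref{def:subchain_complex} and from the identification of $H_*(C'_*)$ with $H_*(\coverExterior;\sll)$ recorded there, so the argument is a verification rather than a new difficulty.
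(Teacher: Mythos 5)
Your proposal is correct and follows essentially the same route as the paper's own proof: apply multiplicativity to the short exact sequence \eqref{eqn:short_exact_quotient}, observe that $\TorCpx{C'_*}=\pm 1$ because the boundary maps of $C'_*$ vanish so $H_*(C'_*)=C'_*$, and that the torsion of the induced homology long exact sequence is $\pm 1$ because the basis of $H_*(\coverExterior;\sll)$ was chosen to coincide with that of $H_*(C'_*)$. The extra care you take over basis compatibility is exactly the (implicit) content of the paper's argument, so nothing further is needed.
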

\begin{proof}
  Applying the Multiplicativity property of Reidemeister torsion to 
  the short exact sequence~\eqref{eqn:short_exact_quotient},
  we have the equality:
  \begin{equation}
    \label{eqn:MP_quotient}
    \TorCpx{C''_*}\TorCpx{C'_*}
    = \pm \Tor{\coverExterior}{\sll}
    \TorCpx{\mathcal{H}(C'_*, C_*(\coverExterior;\sll), C''_*)}
  \end{equation}
  where $\mathcal{H}(C'_*, C_*(\coverExterior;\sll), C''_*)$ denotes
  the induced long exact sequence of homology groups.
  Since $H_*(C'_*) = C'_*$, the torsion $\TorCpx{C'_*} = \pm 1$ by definition.
  We have chosen the basis of $H_*(\coverExterior;\sll)$ as the same one of 
  $H_*(C'_*)$. Hence the torsion for the exact sequence
  $\mathcal{H}(C'_*, C_*(\coverExterior;\sll), C''_*)$
  also turns out to be $\pm 1$.
  We can rewrite Eq.~\eqref{eqn:MP_quotient} as that of the proposition.
\end{proof}

We can also define the following subchain complex in the twisted chain complex 
with the variable $t$ for $\coverExterior$.

\begin{definition}
  Under the assumption that $\rho'$ is $\lambda$-regular, we denote by $C'_*(t)$ 
  the subchain complex of $C_*(\coverExterior;\sllt)$ defined by
  \begin{align*}
    C'_2(t) &= 
    {}_{\C(t)}\langle
    1 \otimes P^{\rho} \otimes \widehat{T^2},\,
    1 \otimes Q^{\rho} \otimes \widehat{T^2}
    \rangle, \\
    C'_1(t) &=
    {}_{\C(t)}\langle
    1 \otimes H \otimes \widehat{\mu},\,
    1 \otimes P^{\rho} \otimes \lambda,\,
    1 \otimes Q^{\rho} \otimes \lambda
    \rangle, \\
    C'_0(t) &=
    {}_{\C(t)}\langle
    1 \otimes H \otimes \bpt
    \rangle
  \end{align*}
  and the boundary operators are given by 
  \begin{gather*}
    0 \to C'_2(t) \xrightarrow{\partial'_2} C'_1(t) \xrightarrow{\partial'_2} C'_0(t) \to 0 \\
    \partial'_2 =
    \begin{pmatrix}
      0 & 0 \\
      t^2-1 & 0 \\
      0 & t^2-1
    \end{pmatrix}, \quad 
    \partial'_1 =
    \begin{pmatrix}
      t^2-1 & 0 & 0 \\
    \end{pmatrix}.
  \end{gather*}
  We also denote by $C''_*(t)$ the quotient $C_*(\coverExterior;\sllt)$ by $C'_*(t)$.
\end{definition}

To define subchain complex $C'_*(t)$, we need a closed loop on $\partial \coverExterior$ 
whose homology class is trivial, \ie which is included in $\ker p^*\alpha$.
This is a reason to choose cycles given by the longitude, 
corresponding to the generators of $H_1(\coverExterior;\sll)$
as in the isomorphism~\eqref{eqn:basis_twistedhomology_coverExterior}.

We can recover the torsion of $C_*(\coverExterior;\sll)$
by substituting $t=1$ into the torsion of $C''_*(t)$,
as discussed below.
Moreover the torsion of $C''_*(t)$ is determined by that of $C_*(\coverExterior;\sllt)$.
According to the definition of the twisted Alexander polynomial by M.~Wada~\cite{Wada94} 
and interpretation as Reidemeister torsion by P.~Kirk and C.~Livingston~\cite{KL}, 
we regard the Reidemeister torsion of the chain complex $C_*(\coverExterior;\sllt)$ 
as the twisted Alexander polynomial for
$\coverExterior$ and the pull--back $Ad \circ \rho'$ and $\alpha$.
This viewpoint gives a computation method of $\Tor{\coverExterior}{\sll}$ 
by evaluation of the twisted Alexander polynomial.

\begin{proposition}
  \label{prop:limit_twistedAlex}
  If $\rho'$ is $\lambda$-regular, then we can express
  $\Tor{\coverExterior}{\sll}$ with the basis of $H_*(\coverExterior;\sll)$
  as in~\eqref{eqn:basis_twistedhomology_coverExterior} as
  $$
  \Tor{\coverExterior}{\sll}
  = \pm \lim_{t \to 1}
  \frac{\twistedAlexAd{\coverExterior}{\rho'}}{t^2-1}.
  $$
\end{proposition}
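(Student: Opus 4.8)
The plan is to
relate the torsion of the acyclic complex $C_*(\coverExterior;\sllt)$ over $\C(t)$---which by the
Kirk--Livingston interpretation is the twisted Alexander polynomial
$\twistedAlexAd{\coverExterior}{\rho'}$---to the torsion of $C''_*(t)$ via the Multiplicativity
property applied to the short exact sequence
$0 \to C'_*(t) \to C_*(\coverExterior;\sllt) \to C''_*(t) \to 0$, and then to pass to the limit
$t \to 1$. First I would record that all three complexes here are acyclic over $\C(t)$: the middle
one is acyclic by hypothesis (it computes the twisted Alexander polynomial), and $C'_*(t)$ is acyclic
because its boundary maps $\partial'_2$ and $\partial'_1$ are injective and surjective respectively for
generic $t$ (the factors $t^2-1$ are nonzero in $\C(t)$), forcing $C''_*(t)$ to be acyclic as well by
the long exact sequence. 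Since every complex is acyclic, the torsion of the associated homology long
exact sequence is trivial, and Multiplicativity reduces to
$$
\TorCpx{C_*(\coverExterior;\sllt)} = \pm\, \TorCpx{C'_*(t)} \cdot \TorCpx{C''_*(t)}.
$$

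Next I would compute $\TorCpx{C'_*(t)}$ directly from the explicit boundary matrices given in the
definition of $C'_*(t)$. Reading off the determinants of the base-change matrices, the $\partial'_2$
block contributes the two entries $t^2-1$ along $P^\rho$ and $Q^\rho$, while $\partial'_1$ contributes
one further factor $t^2-1$ on $H \otimes \widehat{\mu}$; assembling the alternating product in
\eqref{eqn:def_torsion_complex} gives $\TorCpx{C'_*(t)} = \pm (t^2-1)$ up to the usual ambiguity.
Dividing, this yields
$$
\TorCpx{C''_*(t)} = \pm\, \frac{\twistedAlexAd{\coverExterior}{\rho'}}{t^2-1}.
$$

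The final and most delicate step is the specialization $t \to 1$. The complex $C''_*(t)$ is the
$\C(t)$-analogue of the acyclic complex $C''_*$ from Definition~\ref{def:subchain_complex}, whose
torsion equals $\pm\Tor{\coverExterior}{\sll}$ by Proposition~\ref{prop:eqn_torsions}. The key point I
would need to justify is that setting $t=1$ in the based complex $C''_*(t)$ recovers $C''_*$ as a based
complex over $\C$, so that $\TorCpx{C''_*(t)}|_{t=1} = \pm\TorCpx{C''_*}$; this rests on the fact that
the generators chosen for $C'_*(t)$ are exactly lifts of those in Proposition~\ref{prop:lambdabasis_twistedhomology}
and that at $t=1$ the action $\alpha \otimes Ad\circ\rho'$ degenerates to $Ad\circ\rho'$, matching the
coefficient system $C_*(\coverExterior;\sll)$. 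Since $\TorCpx{C''_*(t)}$ is a rational function that is
regular at $t=1$ (the pole of $\twistedAlexAd{\coverExterior}{\rho'}/(t^2-1)$ being precisely cancelled
by the behaviour of the torsion, which remains finite and nonzero as $C''_*$ is acyclic over $\C$),
the limit exists and the chain of identifications gives
$$
\Tor{\coverExterior}{\sll}
= \pm\, \TorCpx{C''_*}
= \pm\, \lim_{t\to 1} \TorCpx{C''_*(t)}
= \pm\, \lim_{t\to 1} \frac{\twistedAlexAd{\coverExterior}{\rho'}}{t^2-1}.
$$

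The main obstacle I anticipate is the continuity argument in this last step: one must argue that the
torsion of the family $C''_*(t)$, computed with bases that are honest lifts of the $t=1$ bases, varies
continuously (indeed rationally) in $t$ and that no spurious zero or pole is introduced at $t=1$, so
that the limit of the torsions equals the torsion of the limiting complex. This is where the
$\lambda$-regularity of $\rho'$ is essential, since it guarantees that the longitude cycles
$P^\rho\otimes\lambda$ and $Q^\rho\otimes\lambda$ remain a valid basis of the relevant homology at $t=1$
and that the subcomplex $C'_*(t)$ specializes without collapse.
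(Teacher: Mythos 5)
Your proposal is correct and follows essentially the same route as the paper: multiplicativity applied to $0 \to C'_*(t) \to C_*(\coverExterior;\sllt) \to C''_*(t) \to 0$, the computation $\TorCpx{C'_*(t)} = \pm(t^2-1)$, the Kirk--Livingston identification of the total torsion with $\twistedAlexAd{\coverExterior}{\rho'}$, and specialization at $t=1$ (the delicate continuity step you flag is exactly what the paper outsources to Proposition~$3.3.1$ of \cite{YY1} rather than arguing from scratch). The one point you gloss over is that the acyclicity of $C_*(\coverExterior;\sllt)$ is not a hypothesis but a consequence of $\lambda$-regularity, which the paper establishes by decomposing that complex into twisted complexes over $\knotexterior$ and invoking the acyclicity of $C_*(\knotexterior;\sllt)$.
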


The proof of Proposition~\ref{prop:limit_twistedAlex} will be divide into two part. 
First we need to prove the torsion of $C''_*(t)$ is given by
$\twistedAlexAd{\coverExterior}{\rho'} / (t^2-1)$.
In the second step, evaluating this rational function at $t=1$
we see the torsion $\TorCpx{C''_*(t)}$ turns out to be
$\TorCpx{C''_*} = \pm \Tor{\coverExterior}{\sll}$ under the isomorphism in
Proposition~\ref{prop:eqn_torsions}.
The first step is divided into three lemmas.
\begin{lemma}
  \label{lemma:Torsion_C'_variable}
  The subchain complex $C'_*(t)$ is an acyclic chain complex.
  The torsion of this chain complex is equal to $\pm (t^2 -1)$.
\end{lemma}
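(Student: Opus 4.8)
The statement concerns the acyclicity of the tiny chain complex $C'_*(t)$ and the computation of its torsion. The key observation is that $C'_*(t)$ is an extremely explicit complex: it has the form
$$
0 \to C'_2(t) \xrightarrow{\partial'_2} C'_1(t) \xrightarrow{\partial'_1} C'_0(t) \to 0
$$
with $\dim_{\C(t)} C'_2(t) = 2$, $\dim_{\C(t)} C'_1(t) = 3$, $\dim_{\C(t)} C'_0(t) = 1$, and the boundary maps are given explicitly by the matrices displayed in the definition of $C'_*(t)$. So the plan is simply to read off acyclicity from these matrices and then compute the torsion directly from the definition in Eq.~\eqref{eqn:def_torsion_complex}.

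First I would verify acyclicity. The matrix $\partial'_1 = (t^2-1\ \ 0\ \ 0)$ has rank $1$ over $\C(t)$ since $t^2-1 \neq 0$ in $\C(t)$, so $\partial'_1$ is surjective onto $C'_0(t)$ and $\dim \ker \partial'_1 = 2$. The matrix $\partial'_2$ has the two nonzero entries $t^2-1$ in its lower $2\times 2$ block, hence rank $2$, so $\partial'_2$ is injective and $\dim \im \partial'_2 = 2$. Since $\im \partial'_2 \subseteq \ker \partial'_1$ and both have dimension $2$, they coincide, giving $H_1(C'_*(t)) = 0$; injectivity of $\partial'_2$ gives $H_2 = 0$ and surjectivity of $\partial'_1$ gives $H_0 = 0$. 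Thus $C'_*(t)$ is acyclic.

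For the torsion, since the complex is acyclic all the $\basisHt$ terms in~\eqref{eqn:def_torsion_complex} are empty, and I need only choose lifts $\basisBt[i]$ of the images $B_i = \im \partial'_i$ and assemble the base-change determinants. Concretely, take $\lift B_1 = \langle 1\otimes H\otimes\widehat\mu\rangle$ as a lift of $B_1 = \im\partial'_1$ in $C'_1(t)$ and $\lift B_0 = C'_0(t)$. The new basis of $C'_2(t)$ is $\lift B_2 = \{1\otimes P^\rho\otimes\widehat{T^2},\,1\otimes Q^\rho\otimes\widehat{T^2}\}$ (which maps isomorphically onto its image), of $C'_1(t)$ is $\partial'_2(\lift B_2)\cup\lift B_1$, and of $C'_0(t)$ is $\partial'_1(\lift B_1)$. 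Computing the three base-change determinants against the given bases: in degree $0$ the factor is $t^2-1$; in degree $1$ the matrix sending $\{1\otimes P^\rho\otimes\lambda,\,1\otimes Q^\rho\otimes\lambda,\,1\otimes H\otimes\widehat\mu\}$ (the reordered old basis) to $\{\partial'_2(P^\rho\otimes\widehat{T^2}),\,\partial'_2(Q^\rho\otimes\widehat{T^2}),\,1\otimes H\otimes\widehat\mu\}$ has determinant $(t^2-1)^2$; and in degree $2$ the factor is $1$. Assembling with the signs $(-1)^{i+1}$ from~\eqref{eqn:def_torsion_complex} yields
$$
\TorCpx{C'_*(t)} = (t^2-1)^{-1}\cdot (t^2-1)^{2}\cdot 1 = \pm(t^2-1),
$$
the sign absorbing the indeterminacy in orderings and orientations of cells.

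There is no serious obstacle here — the complex is small and the maps are fully explicit — so the only care needed is bookkeeping: keeping track of the reordering of bases when forming $\partial'_{i+1}(\basisBt[i+1])\cup\basisBt[i]$ and matching the exponents $(-1)^{i+1}$ correctly so that the single $(t^2-1)^{-1}$ from the isomorphism $\partial'_1$ combines with the $(t^2-1)^2$ from $\partial'_2$ to give the claimed $\pm(t^2-1)$.
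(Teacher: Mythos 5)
Your computation is correct and is exactly the direct verification that the paper intends: its own proof of this lemma is the one-line remark that it ``follows from the construction,'' and your explicit rank count for acyclicity together with the base-change determinants $(t^2-1)^{-1}\cdot(t^2-1)^2\cdot 1 = \pm(t^2-1)$ is precisely what that construction yields. No gap; you have simply written out the bookkeeping the author omits.
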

\begin{proof}
  This lemma follows from the construction.
\end{proof}

\begin{lemma}
  \label{lemma:Rtorsion_coverExterior_variable}
  If $\rho'$ is $\lambda$-regular,
  then the twisted chain complex $C_*(\coverExterior;\sllt)$ defined by $p^* (Ad \circ \rho')$
  is acyclic.
  Moreover the torsion of $C_*(\coverExterior;\sllt)$ is given by the twisted Alexander polynomial
  $\twistedAlexAd{\coverExterior}{\rho'}$.
\end{lemma}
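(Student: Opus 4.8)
The plan is to treat the two assertions of the lemma in tandem. By the Wada--Kirk--Livingston interpretation recalled just above the statement, the torsion of $C_*(\coverExterior;\sllt)$, whenever it is defined, coincides with the twisted Alexander polynomial $\twistedAlexAd{\coverExterior}{\rho'}$ up to a unit $\pm t^k$; and the torsion of a based complex over the field $\C(t)$ is defined \emph{precisely} when that complex is acyclic. Hence the whole content of the lemma is the acyclicity of $C_*(\coverExterior;\sllt)$, and once that is in hand the identification with the twisted Alexander polynomial is read off from the definition via \cite{Wada94,KL}. I would therefore invest all of the effort in acyclicity.

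First I would produce the $\C(t)$--analogue of the splitting in Lemma~\ref{lemma:decomp_twistedcomplex_coverExterior}. Running the argument of \cite[Lemma~4.3]{DuboisYamaguchi:abelianCovering} over the coefficient $\C(t)\otimes\sll$ instead of $\sll$, one again organizes $C_*(\coverExterior;\sllt)$ through the deck group $G=\langle g\mid g^2=1\rangle$ of $\coverExterior\to\knotexterior$. The one point needing care is that over $\C(t)$ the deck transformation no longer squares to the identity on the twisted complex: since $Ad\circ\rho'(\mu^2)=\I$ by Remark~\ref{remark:rest_adjoint_rep}, while the variable contributes the weight $t^{\pm 2}$, the transformation $g$ acts with $g^2=t^{-2}$, so that it is $tg$ rather than $g$ which is an involution. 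Splitting by the idempotents $(1\pm tg)/2$ then decomposes $C_*(\coverExterior;\sllt)$ into two copies of the base complex $C_*(\knotexterior;\sllt)$, one defined by $\alpha\otimes Ad\circ\rho'$ and the other by $\signrep\cdot(\alpha\otimes Ad\circ\rho')$; because $\signrep$ multiplies the meridian weight by $-1$, the second summand is the first with $t$ replaced by $-t$.

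It then suffices to prove that the base complex $C_*(\knotexterior;\sllt)$ for $\alpha\otimes Ad\circ\rho'$ is acyclic (the $t\mapsto -t$ summand follows by the same token), and this is the main obstacle. Over $\C(t)$ one checks directly that $H_0$ vanishes, since the meridian acts as $t\,Ad\circ\rho'(\mu)$, whose eigenvalues are $\pm t$, so that $t\,Ad\circ\rho'(\mu)-\I$ is invertible in $\C(t)$; the same computation shows the boundary torus is $\C(t)$--acyclic by a Künneth argument, whence $H_2$ is controlled by $H_1$, and with $\chi(\knotexterior)=0$ everything reduces to $H_1(\knotexterior;\sllt)=\bm{0}$. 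Equivalently, the adjoint twisted Alexander polynomial $\twistedAlexAd{\knotexterior}{\rho'}$ must be a nonzero element of $\C(t)$, that is, the adjoint Alexander module must be $\C[t,t^{-1}]$--torsion. I would deduce this from the hypothesis that $\rho'$ is $\lambda$--regular by invoking \cite[Section~2.6]{YY1} and \cite[Proposition~3.18]{Porti:1997}: condition~\eqref{item:dim_regularity} forces the module to have rank zero, the single dimension of $H_1(\knotexterior;\sll)$ at $t=1$ being accounted for by a simple zero of $\twistedAlexAd{\knotexterior}{\rho'}$ at $t=\pm1$ rather than by a free summand.

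Granting base acyclicity, the splitting of the preceding paragraph makes $C_*(\coverExterior;\sllt)$ acyclic, so its torsion is defined; multiplicativity of torsion across the direct sum yields the Fox--Turaev factorisation
\begin{equation*}
  \twistedAlexAd{\coverExterior}{\rho'}\;\doteq\;\twistedAlexAd{\knotexterior}{\rho'}\cdot\twistedAlexAd[-t]{\knotexterior}{\rho'}
\end{equation*}
up to a unit $\pm t^k$. Reading this torsion through the dictionary of \cite{Wada94,KL} identifies it with the twisted Alexander polynomial $\twistedAlexAd{\coverExterior}{\rho'}$, which proves both claims of the lemma.
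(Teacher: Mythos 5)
Your proposal follows essentially the same route as the paper: decompose $C_*(\coverExterior;\sllt)$ via the deck group $G$ into two copies of the base complex (one for $\alpha\otimes Ad\circ\rho'$, one for its twist by $\signrep$, i.e.\ $t\mapsto -t$), reduce acyclicity to that of $C_*(\knotexterior;\sllt)$, obtain the latter from $\lambda$-regularity via the machinery of \cite{YY1} and \cite{Porti:1997}, and identify the resulting torsion with the twisted Alexander polynomial through \cite{Wada94,KL}. The only cosmetic differences are that the paper handles the second summand by conjugating $\signrep(Ad\circ\rho')$ to $\I\oplus\psi_2$ rather than by the substitution $t\mapsto -t$, and that it cites \cite[Proposition~$3.1.1$]{YY1} directly for the acyclicity step you sketch; neither changes the substance.
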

\begin{proof}
  By~\cite[Lemma~$4.3$]{DuboisYamaguchi:abelianCovering},
  we have the following decomposition of $C_*(\coverExterior;\sllt)$:
  \begin{align}
    C_*(\coverExterior;\sllt)
    &\simeq (\C(t) \otimes \sll \otimes \C[G]) \otimes_{\Z[\knotgroup]} C_*(\univcover{\knotexterior};\Z) \notag \\
    &= (\C(t) \otimes \sll \otimes \C[f_1]) \otimes_{\Z[\knotgroup]} C_*(\univcover{\knotexterior};\Z) \notag \\
    & \quad \oplus (\C(t) \otimes \sll \otimes \C[f_{-1}]) \otimes_{\Z[\knotgroup]} C_*(\univcover{\knotexterior};\Z)
    \label{eqn:decompo_twistedchain_with_variable}
  \end{align}
  where we take tensor product under the representations $\alpha$, $Ad \circ \rho'$ and the projection
  $\knotgroup \to G$.
  The right hand side of Eq.~\eqref{eqn:decompo_twistedchain_with_variable} is isomorphic to the direct sum:
  $$
  C_*(\knotexterior;\sllt) \oplus C_*(\knotexterior; \C(t) \otimes (\sll \otimes \C_{-1})).
  $$
  Since we assume that $\rho'$ is $\lambda$-regular,
  the acyclicity of 
  $C_*(E_K;\sllt)$ follows from \cite[Proposition~$3.1.1$]{YY1}.
  The second summand $C_*(\knotexterior; \C(t) \otimes (\sll \otimes \C_{-1}))$ is defined by
  $\alpha \otimes \signrep (Ad \circ \rho').$
  We have seen that the representation $\signrep (Ad \circ \rho')$ is conjugate to
  $\I \oplus \sqrtrep \rho = \I \oplus \psi_2$
  in the proof of Proposition~\ref{prop:decomp_twistedhomology_coverExterior}.
  This conjugation of representation induces the following isomorphism between the homology groups:
  $$
  H_*(\knotexterior; \C(t) \otimes (\sll \otimes \C_{-1}))
  \simeq H_*(\knotexterior;\C(t)) \oplus H_*(\knotexterior;\C(t) \otimes V_2).
  $$
  It is known that $H_*(\knotexterior;\C(t))$ is trivial and
  Proposition~\ref{prop:decomp_twistedhomology_coverExterior} shows that
  $H_*(\knotexterior;\C(t) \otimes V_2)$ is isomorphic to $H_*(\knotexterior;\sllt)$, which is trivial.

  It remains to prove that the torsion of $C_*(\coverExterior;\sllt)$ coincides with
  the twisted Alexander polynomial. This follows from the result of~\cite[Section~$4$]{KL}. 
\end{proof}

\begin{lemma}
  \label{lemma:C_dprime_twistedAlex}
  Under the same hypothesis of Lemma~\ref{lemma:Rtorsion_coverExterior_variable},
  the chain complex $C''_*(t)$ is also acyclic and 
  its torsion is expressed as
  $$
  \TorCpx{C''_*(t)} =
  \frac{\twistedAlexAd{\coverExterior}{\rho'}}{t^2-1}
  $$
  up to a factor $\pm t^k$ $(k \in \Z)$.
\end{lemma}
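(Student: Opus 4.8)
The plan is to apply the multiplicativity property of Reidemeister torsion to the short exact sequence
$$
0 \to C'_*(t) \to C_*(\coverExterior;\sllt) \to C''_*(t) \to 0,
$$
which comes directly from the definition of $C''_*(t)$ as the quotient, exactly as in the proof of Proposition~\ref{prop:eqn_torsions} but now over the coefficient field $\C(t)$.

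First I would verify that $C''_*(t)$ is acyclic. Lemma~\ref{lemma:Torsion_C'_variable} gives that $C'_*(t)$ is acyclic, and Lemma~\ref{lemma:Rtorsion_coverExterior_variable} gives that the total complex $C_*(\coverExterior;\sllt)$ is acyclic. Feeding these two vanishing statements into the long exact homology sequence induced by the short exact sequence above forces $H_*(C''_*(t)) = \bm{0}$; hence $C''_*(t)$ is acyclic and the associated homology long exact sequence $\mathcal{H}$ degenerates to the zero sequence, so that $\TorCpx{\mathcal{H}} = \pm 1$.

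Next I would invoke multiplicativity. Since the homology sequence contributes only $\pm 1$, the multiplicativity property yields, up to sign,
$$
\TorCpx{C'_*(t)} \cdot \TorCpx{C''_*(t)} = \pm\, \TorCpx{C_*(\coverExterior;\sllt)}.
$$
Substituting $\TorCpx{C'_*(t)} = \pm(t^2-1)$ from Lemma~\ref{lemma:Torsion_C'_variable} and $\TorCpx{C_*(\coverExterior;\sllt)} = \twistedAlexAd{\coverExterior}{\rho'}$ from Lemma~\ref{lemma:Rtorsion_coverExterior_variable}, and solving for the quotient factor, gives
$$
\TorCpx{C''_*(t)} = \pm \frac{\twistedAlexAd{\coverExterior}{\rho'}}{t^2-1},
$$
which is the claimed identity.

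The hard part will be the bookkeeping of bases needed for multiplicativity and the control of the $\pm t^k$ indeterminacy. Multiplicativity requires that the standard cell-and-$\sll$ basis of $C_*(\coverExterior;\sllt)$ agree, up to a base change, with the concatenation of the distinguished basis of $C'_*(t)$ and a lift of the induced basis of $C''_*(t)$. The generators spanning $C'_*(t)$ --- namely $1 \otimes P^\rho \otimes \widehat{T^2}$, $1 \otimes Q^\rho \otimes \widehat{T^2}$, $1 \otimes H \otimes \widehat{\mu}$, the two longitudinal classes, and $1 \otimes H \otimes \bpt$ --- are expressed in the standard basis through the action $\alpha \otimes Ad \circ \rho'$, so the relevant base-change determinant is a monomial of the form $\pm t^k$. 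This is precisely the indeterminacy already carried by the twisted Alexander polynomial $\twistedAlexAd{\coverExterior}{\rho'}$, and it is absorbed into the stated $\pm t^k$ ambiguity, so no further estimate is required.
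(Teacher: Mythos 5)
Your proposal is correct and follows essentially the same route as the paper: acyclicity of $C''_*(t)$ from the long exact homology sequence combined with Lemmas~\ref{lemma:Torsion_C'_variable} and~\ref{lemma:Rtorsion_coverExterior_variable}, then the Multiplicativity property applied to the short exact sequence~\eqref{eqn:short_exact_variable}, and substitution of the two known torsions. Your additional remarks on base-change bookkeeping and the $\pm t^k$ indeterminacy are consistent with, and slightly more explicit than, what the paper records.
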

\begin{proof}
  When we consider the homology long exact sequence induced from
  \begin{equation}
    \label{eqn:short_exact_variable}
    0 \to C'(t) \to C_*(\coverExterior;\sllt) \to C''_*(t) \to 0,
  \end{equation}
  we can assert that $C''_*(t)$ is also acyclic
  by Lemmas~\ref{lemma:Torsion_C'_variable} \&~\ref{lemma:Rtorsion_coverExterior_variable}.
  Applying Multiplicativity property for the short exact sequence~\eqref{eqn:short_exact_variable}
  of acyclic chain complexes, we have the following equality:
  $$
  \TorCpx{C'_*(t)} \TorCpx{C''_*(t)} = \Tor{\coverExterior}{\sllt}
  $$
  We obtain the proposition substituting torsions in 
  Lemma~\ref{lemma:Torsion_C'_variable} \&~\ref{lemma:Rtorsion_coverExterior_variable}
  into the above equality.
\end{proof}

We need to show the evaluation of $\TorCpx{C''_*(t)}$ gives the torsion $\Tor{\coverExterior}{\sll}$.
This is an application of~\cite[Proposition~3.3.1]{YY1} to our situation, which can be rewritten as follows.
\begin{lemma}[Proposition~3.3.1 in~\cite{YY1}]
  \label{lemma:limitformula}
  If $C_*(\coverExterior;\sllt)$ and $C'_*(t)$ is acyclic,  then the following relation holds:
  $$
  \lim_{t \to 1}
  \frac{
    \Tor{\coverExterior}{\sllt}
  }{
    \TorCpx{C'_*(t)}
  }
  = \pm \Tor{\coverExterior}{\sll}.
  $$
\end{lemma}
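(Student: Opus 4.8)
The plan is to reduce the limit formula to a specialization statement for the torsion of the acyclic quotient complex $C''_*(t)$ at $t=1$, and then to show that this torsion extends regularly across $t=1$ to the torsion of the non--acyclic complex $C_*(\coverExterior;\sll)$ carrying the homology bases fixed in Proposition~\ref{prop:lambdabasis_twistedhomology}.

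First I would apply the Multiplicativity property to the short exact sequence~\eqref{eqn:short_exact_variable} of acyclic complexes. Since $C'_*(t)$, $C_*(\coverExterior;\sllt)$ and $C''_*(t)$ are all acyclic (Lemmas~\ref{lemma:Torsion_C'_variable}, \ref{lemma:Rtorsion_coverExterior_variable} and~\ref{lemma:C_dprime_twistedAlex}), the induced long exact homology sequence is the trivial one and contributes torsion $\pm 1$, so $\Tor{\coverExterior}{\sllt} = \pm\,\TorCpx{C'_*(t)}\,\TorCpx{C''_*(t)}$. The ratio on the left--hand side of the lemma therefore equals $\pm\TorCpx{C''_*(t)}$, and the whole statement reduces to the identity $\lim_{t\to1}\TorCpx{C''_*(t)} = \pm\Tor{\coverExterior}{\sll}$.

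To prove this, I would compute $\TorCpx{C''_*(t)}$ through the standard bases inherited from $C_*(\coverExterior;\sllt)$: choosing in each degree cells $b_i(t)$ whose boundaries span the image $\partial''_i C''_i(t)$, the family $\partial''_{i+1}b_{i+1}(t)\cup b_i(t)$ is a basis of $C''_i(t)$ and $\TorCpx{C''_*(t)}$ is the alternating product of the resulting base--change determinants (defined up to $\pm t^k$). The decisive observation is that the boundary operators of $C_*(\coverExterior;\sllt)$, and hence of $C''_*(t)$, have entries in $\C[t,t^{-1}]$ and are therefore regular at $t=1$, while at $t=1$ the specialized quotient is precisely $C''_* = C_*(\coverExterior;\sll)/C'_*$, which is acyclic by the homology long exact sequence of~\eqref{eqn:short_exact_quotient}. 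Because the specialized complex is acyclic, one can choose the $b_i(t)$ over the local ring of $\C(t)$ at $t=1$ so that $\partial''_{i+1}b_{i+1}(t)\cup b_i(t)$ remains a basis throughout a neighbourhood of $t=1$; then every base--change determinant is a unit in that local ring, so $\TorCpx{C''_*(t)}$ has neither zero nor pole at $t=1$ and its limit is its value there, namely $\TorCpx{C''_*}$. Finally Proposition~\ref{prop:eqn_torsions} identifies $\TorCpx{C''_*}$ with $\pm\Tor{\coverExterior}{\sll}$.

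The main obstacle is exactly this specialization step: over the field $\C(t)$ a chain contraction (equivalently, a careless choice of the $b_i(t)$) may develop a pole at $t=1$, which would make the naive limit meaningless. The point to be argued carefully is that the acyclicity of the \emph{specialized} complex $C''_*$ allows the $b_i(t)$ to be chosen integrally near $t=1$, or equivalently that the single square matrix $\partial''+\gamma$ assembled from the boundary maps and a splitting $\gamma$ over $\C[t,t^{-1}]$ localized at $t=1$ is invertible at $t=1$; its determinant equals $\TorCpx{C''_*(t)}$ up to $\pm t^k$, so passing to the limit becomes plain evaluation at $t=1$. This is the content of the cited Proposition~3.3.1 in~\cite{YY1}, whose hypotheses---acyclicity of both $C_*(\coverExterior;\sllt)$ and $C'_*(t)$, together with regularity of the boundary operators at $t=1$---are supplied by the preceding lemmas.
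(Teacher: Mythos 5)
Your proposal is correct, and it follows the route the paper itself indicates: the paper gives no proof of this lemma (it is quoted as Proposition~3.3.1 of~\cite{YY1}), but it remarks that this proposition ``includes'' Proposition~\ref{prop:eqn_torsions} and Lemma~\ref{lemma:C_dprime_twistedAlex}, which are exactly your two reductions --- multiplicativity for the short exact sequence~\eqref{eqn:short_exact_variable} to replace the ratio by $\pm\TorCpx{C''_*(t)}$, and the identification $\TorCpx{C''_*}=\pm\Tor{\coverExterior}{\sll}$. The only ingredient not already present in the surrounding text is the specialization step, and you supply it correctly: since the boundary operators of $C''_*(t)$ are defined over the local ring at $t=1$ and the specialized complex $C''_*$ is acyclic, the splitting can be chosen integrally near $t=1$, so $\TorCpx{C''_*(t)}$ is a unit there and the limit is plain evaluation.
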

As an application of this lemma, we can show the proof of Proposition~\ref{prop:limit_twistedAlex}.
\begin{proof}[Proof of Proposition~\ref{prop:limit_twistedAlex}]
  It follows from Lemmas~\ref{lemma:Torsion_C'_variable}, \ref{lemma:Rtorsion_coverExterior_variable}
  and~\ref{lemma:limitformula}.
\end{proof}
In fact, Proposition~$3.3.1$ in \cite{YY1} includes
Proposition~\ref{prop:eqn_torsions} and Lemma~\ref{lemma:C_dprime_twistedAlex}.
They can make Proposition~$3.3.1$ in~\cite{YY1} easy to understand.
\begin{remark}
  The acyclic chain complexes $C_*(\coverExterior;\sllt)$ and $C'_*(t)$ correspond to the non--acyclic complexes
  $C_*(\coverExterior;\sll)$ and $C'_*$ in evaluating at $t=1$.
  However
  the acyclic chain complex $C''_*(t)$ corresponds to the acyclic one.
  It makes sense to take the evaluation of
  $\twistedAlexAd{\coverExterior}{\rho'} / (t^2-1)$
  as torsions of acyclic chain complexes.
\end{remark}

\subsection{Torsion of Mayer--Vietoris homology exact sequence}
\label{subsect:Torsion_Mayer_Vietoris_sequence}
We will express the torsion of the Mayer--Vietoris exact sequence~\eqref{eqn:Mayer_Vietoris} 
as the special value of rational function on the character varieties.
This is due to the identification between 
the twisted homology group $H_1(E_K;\sll)$ and the cotangent space of the $\SL$-character variety.
Roughly speaking, the rational function expresses the ratio of two $1$-forms on the character variety.
We will show that the special value of the rational function at an irreducible metabelian character
gives the torsion of the exact sequence~\eqref{eqn:Mayer_Vietoris}.

To observe the torsion of Mayer--Vietoris exact sequence~\eqref{eqn:Mayer_Vietoris},
we set the bases of each homology groups in the sequence.
We assume that $H_*(\Sigma_2; \C_{\xi})=\bm{0}$.
The coefficient vector space $\sll$ has the standard basis $\{E,\, H,\,  F\}$.
However we set a basis of $\sll$ as $\{H,\, P^\rho,\, Q^\rho\}$ where
$P^\rho$ and $Q^\rho$ are defined as $\frac{1}{\sqrt{2}}(E \mp F)$.
\begin{lemma}
  \label{lemma:determinant_MV}
  Suppose that $\sll$ is given by the basis $\{H,\,  P^\rho,\, Q^\rho\}$ and
  $\rho'$ is $\lambda$-regular.
  If we choose the bases of $H_*(\widehat{T^2};\sll)$, $H_*(D^2 \times S^1;\sll)$
  as in Lemma~\ref{lemma:homology_doubleCoverTorus_solidTorus} and the basis of
  $H_*(\coverExterior;\sll)$ as in Eq.~\eqref{eqn:basis_twistedhomology_coverExterior},
  then the torsion $\TorMV{\hbox{\ref{eqn:Mayer_Vietoris}}}$ is given by the determinant of
  the base change matrix
  $$
  T = \left(
  \{i_*(P^\rho \otimes \widehat{\mu}),\, i_*(Q^\rho \otimes \widehat{\mu})\}
  /  \{P^\rho \otimes \lambda,\, Q^\rho \otimes \lambda\}
  \right)
  $$
  in $H_1(\coverExterior;\sll)$,
  where $i_*$ denotes the induced homomorphism from $H_1(\widehat{T^2};\sll)$ to $H_1(\coverExterior;\sll)$
  by the inclusion $\widehat{T^2} \hookrightarrow \coverExterior$.
\end{lemma}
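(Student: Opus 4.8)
The plan is to regard the Mayer--Vietoris sequence~\eqref{eqn:Mayer_Vietoris} as an acyclic based chain complex and to compute its torsion directly from the homology bases already fixed in Lemma~\ref{lemma:homology_doubleCoverTorus_solidTorus} and in~\eqref{eqn:basis_twistedhomology_coverExterior}. First I would substitute the groups computed in Proposition~\ref{prop:twistedhomology_coveringspaces}, Lemma~\ref{lemma:homology_doubleCoverTorus_solidTorus} and Proposition~\ref{prop:lambdabasis_twistedhomology}, using the prescribed coefficient basis $\{H,\,P^\rho,\,Q^\rho\}$ of $\sll$ so that $H_1(\widehat{T^2};\sll)$ carries the basis $\{x\otimes\widehat{\mu},\,x\otimes\lambda : x\in\{H,P^\rho,Q^\rho\}\}$. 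Because $H_2(\Sigma_2;\sll)=H_1(\Sigma_2;\sll)=\bm{0}$ and $H_2(D^2\times S^1;\sll)=\bm{0}$, the long exact sequence disconnects at these vanishing terms, so it splits as a direct sum of three acyclic subcomplexes: a top block $0\to H_3(\Sigma_2;\sll)\to H_2(\widehat{T^2};\sll)\to H_2(\coverExterior;\sll)\to 0$, a middle block $0\to H_1(\widehat{T^2};\sll)\xrightarrow{(i_*,-j_*)}H_1(\coverExterior;\sll)\oplus H_1(D^2\times S^1;\sll)\to 0$, and a bottom block $0\to H_0(\widehat{T^2};\sll)\to H_0(\coverExterior;\sll)\oplus H_0(D^2\times S^1;\sll)\to H_0(\Sigma_2;\sll)\to 0$. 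By multiplicativity of torsion over direct sums, $\TorMV{\hbox{\ref{eqn:Mayer_Vietoris}}}$ is, up to sign, the product of the torsions of these three blocks.

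Next I would dispatch the top and bottom blocks. In both, every map sends a chosen basis vector either to another chosen basis vector or to $0$: the map $H_2(\widehat{T^2};\sll)\to H_2(\coverExterior;\sll)$ is the projection $\sll\otimes\widehat{T^2}\to V_2\otimes\widehat{T^2}$ killing $H\otimes\widehat{T^2}$, the connecting map carries the fundamental class to $\pm H\otimes\widehat{T^2}$, and in degree $0$ the maps are the projection onto $V_1\otimes\bpt$ together with the identification on $\sll\otimes\bpt$ coming from $D^2\times S^1$. Hence both blocks have torsion $\pm1$, and the whole torsion reduces to that of the middle block.

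For the middle block, $\Psi=(i_*,-j_*)$ is an isomorphism of six--dimensional spaces (injective since $H_2(\Sigma_2;\sll)=\bm{0}$, surjective since $H_1(\Sigma_2;\sll)=\bm{0}$), so its torsion is $\det\Psi$ up to sign. I would assemble the matrix of $\Psi$ in the above bases. The map $j_*$ into the solid torus annihilates the meridional classes $x\otimes\widehat{\mu}$ (the meridian disk bounds $\widehat{\mu}$) and is the identity on the longitudinal classes $x\otimes\lambda$; thus each of the three columns indexed by $x\otimes\lambda$ contributes a single entry $-1$ in the $H_1(D^2\times S^1;\sll)$ block, while $i_*$ fixes the basis vectors $H\otimes\widehat{\mu},\,P^\rho\otimes\lambda,\,Q^\rho\otimes\lambda$ of $H_1(\coverExterior;\sll)$. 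Expanding the $6\times6$ determinant along these three solid--torus rows removes the three $\lambda$--columns --- in particular the column $i_*(H\otimes\lambda)$, whose entries therefore never need to be computed --- and leaves the $3\times3$ minor indexed by the meridional domain classes and the $H_1(\coverExterior;\sll)$ codomain basis. In that minor $i_*(H\otimes\widehat{\mu})=H\otimes\widehat{\mu}$ supplies the first pivot, and $i_*(P^\rho\otimes\widehat{\mu}),\,i_*(Q^\rho\otimes\widehat{\mu})$ fill the lower right $2\times2$ block, which is by definition $T$. The minor is hence $\det T$, giving $\TorMV{\hbox{\ref{eqn:Mayer_Vietoris}}}=\pm\det T$.

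The main obstacle is the bookkeeping: identifying the connecting and inclusion--induced maps in the fixed bases and, above all, checking that $i_*(P^\rho\otimes\widehat{\mu})$ and $i_*(Q^\rho\otimes\widehat{\mu})$ have vanishing $H\otimes\widehat{\mu}$--component, so that $T$ really is a $2\times2$ base change inside ${}_\C\langle P^\rho\otimes\lambda,\,Q^\rho\otimes\lambda\rangle$ and the first column of the minor is $(1,0,0)$. This follows because $i_*$ is a map of $\sll$--local systems and hence respects the coefficient splitting $\sll=V_1\oplus V_2$ preserved by $Ad\circ\rho'$: since $P^\rho,Q^\rho\in V_2$ while the class $H\otimes\widehat{\mu}$ is the $V_1$--part of $H_1(\coverExterior;\sll)$, the images of the $V_2$--meridional cycles stay in the $V_2$--part $\langle P^\rho\otimes\lambda,\,Q^\rho\otimes\lambda\rangle$. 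It is precisely the $\lambda$--regularity of $\rho'$ that guarantees, via Proposition~\ref{prop:lambdabasis_twistedhomology}, that these two longitudinal classes form a basis of that $V_2$--part; the determinant computation itself is then routine.
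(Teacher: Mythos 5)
Your proposal is correct and follows essentially the same route as the paper: the paper likewise reduces $\TorMV{\hbox{\ref{eqn:Mayer_Vietoris}}}$ to the determinant of the degree--one isomorphism $H_1(\widehat{T^2};\sll) \to H_1(\coverExterior;\sll)\oplus H_1(D^2\times S^1;\sll)$ and reads off $\det T$ from its block--triangular matrix in the chosen bases. You simply make explicit the steps the paper leaves implicit (the splitting of the exact sequence at the vanishing terms $H_2(\Sigma_2;\sll)=H_1(\Sigma_2;\sll)=\bm{0}$, the triviality of the degree--$2$/$3$ and degree--$0$ blocks, and the $V_1\oplus V_2$ argument showing the off--diagonal block vanishes), so no changes are needed.
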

\begin{proof}
  It follows from the definition of torsion for an acyclic complex that
  the torsion $\TorMV{\hbox{\ref{eqn:Mayer_Vietoris}}}$ is equal to the determinant of the
  isomorphism:
  $$
  H_1(\widehat{T^2};\sll) \to
  H_1(\coverExterior;\sll) \oplus H_1(D^2 \times S^1;\sll),
  $$
  whose representation matrix is
  $
  \begin{pmatrix}
    1 & \bm{O} &  \bm{O} \\
    \bm{O} & T & \bm{O} \\
    \multicolumn{2}{c}{\I_3} & \I_3 
  \end{pmatrix}
  $
  where $\I_3$ denotes the $3 \times 3$ identity matrix.
\end{proof}

We need to observe where vectors $P^\rho \otimes \lambda$ and $Q^\rho \otimes \lambda$
live in the decomposition of the twisted homology group $H_1(\coverExterior;\sll)$ as in 
Proposition~\ref{prop:decomp_twistedhomology_coverExterior}:
\begin{align*}
  H_1(\coverExterior;\sll) 
  &\simeq
  H_1(\knotexterior;\sll) \oplus H_1(\knotexterior;\sll \otimes \C_{-1})\\
  &\simeq
  H_1(\knotexterior;\sll) \oplus H_1(\knotexterior;\C) \oplus H_*(\knotexterior;V_2 \otimes \C_{-1}).
\end{align*}
As what we have seen in the proof of Proposition~\ref{prop:lambdabasis_twistedhomology},
the vector $P^\rho \otimes \lambda$ is contained in the first summand $H_1(\knotexterior;\sll)$
and $Q^\rho \otimes \lambda$ is contained in
$H_*(\knotexterior;V_2 \otimes \C_{-1}) \simeq H_1(\knotexterior;\sll)$.
To compute the determinant of the base change matrix $T$,
it is enough to consider the ratio between two vector $P^\rho \otimes \widehat{\mu}$
(resp. $Q^\rho \otimes \widehat{\mu}$)
and $P^\rho \otimes \lambda$ (resp. $Q^\rho \otimes \lambda$)
in $H_*(\knotexterior;\sll)$.
To estimate these ratio,
we use {\it the formula of change of loops} given in~\cite[Proposition~$4.7$]{Porti:1997}.
We restate it for our situation.
\begin{lemma}[Proposition~$4.7$ in~\cite{Porti:1997}]
  \label{lemma:basechange_formula}
  Suppose that $\dim_{\C} H_1(\knotexterior;\sll) = 1$. 
  We assume that both of $P^\rho \otimes \widehat{\mu}$ and $P^\rho \otimes \lambda$ 
  give bases of $H_1(\knotexterior;\sll)$ and
  denote by $[P^\rho \otimes \widehat{\mu} / P^\rho \otimes \lambda]$
  the determinant of base change matrix from $P^\rho \otimes \lambda$
  to $P^\rho \otimes \widehat{\mu}$.
  
  Then the square of $[P^\rho \otimes \widehat{\mu} / P^\rho \otimes \lambda]$
  is expressed as the special value of the following rational function:
  \begin{equation}
    \label{eqn:square_det_basechange_matrix}
    [P^\rho \otimes \widehat{\mu} / P^\rho \otimes \lambda]^2 =
    \left.
    \frac{I_\lambda^2 -4}{I_{\widehat{\mu}}^2 -4}
    \left(\frac{d I_{\widehat{\mu}}}{dI_\lambda}\right)^2
    \right|_{\chi = \chi_{\rho'}}
  \end{equation}
  at the character of $\rho'$.
\end{lemma}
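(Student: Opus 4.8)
The plan is to deduce the formula from the identification of $H_1(\knotexterior;\sll)$ with the Zariski cotangent space $T^{Zar}_{\chi_{\rho'}}X(\knotexterior)$, which is available by \cite[Proposition~$3.5$]{Porti:1997} since $\rho'$ is irreducible and $\dim_\C H_1(\knotexterior;\sll)=1$ by the $\lambda$-regularity of $\rho'$. Under this identification the task reduces to expressing each peripheral class $[P^\rho\otimes\gamma]$, for $\gamma\in\{\widehat{\mu},\lambda\}$, through the differential $dI_\gamma$ of the trace function, and then comparing the two inside the one-dimensional space $H_1(\knotexterior;\sll)$. This is exactly the mechanism of Porti's change-of-curves formula \cite[Proposition~$4.7$]{Porti:1997}; I would recall it and check that it specializes to our cycles $P^\rho\otimes\gamma$.

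First I would compute both entries of the Kronecker pairing between $H_1(\knotexterior;\sll)$ and $H^1(\knotexterior;\sll)$. A tangent vector at $\chi_{\rho'}$ is represented by a cocycle $u\in Z^1(\knotgroup;\sll)$, and differentiating $\trace\rho_t(\gamma)$ along the deformation $\rho_t=(\I+t\,u(\cdot)+O(t^2))\rho'$ yields $dI_\gamma(u)=\trace(u(\gamma)\rho'(\gamma))$, while the pairing of the $1$-cycle $P^\rho\otimes\gamma$ with $u$ is $\trace(P^\rho\,u(\gamma))$ with respect to the trace form $B(X,Y)=\trace(XY)$ on $\sll$. When $\rho'(\gamma)$ is regular semisimple with eigenvalues $z^{\pm1}$ one writes $\rho'(\gamma)=\tfrac{z-z^{-1}}{2}\,a_\gamma+\tfrac{z+z^{-1}}{2}\I$, where $a_\gamma$ is the invariant direction of $Ad_{\rho'(\gamma)}$ (the $+1$-eigenvector); since $u(\gamma)$ is trace-free this gives $dI_\gamma(u)=\tfrac{z-z^{-1}}{2}\,\trace(a_\gamma u(\gamma))$. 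Because $\mu$ and $\lambda$ commute, the peripheral subgroup has a common invariant direction, which at $\chi_{\rho'}$ is precisely $P^\rho$ (the $+1$-eigenvector of $Ad_{\rho'(\mu)}$), and the same eigenspace serves for $\widehat{\mu}=\mu^2$. Hence $[P^\rho\otimes\gamma]=\dfrac{c}{\sqrt{I_\gamma^2-4}}\,dI_\gamma$ with a constant $c$ depending only on the normalization of $P^\rho$, and therefore common to $\gamma=\widehat{\mu}$ and $\gamma=\lambda$.

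Forming the quotient of the two relations, the common factor $c$ cancels and the sign ambiguity of $\sqrt{\;}$ disappears after squaring, giving $[P^\rho\otimes\widehat{\mu}/P^\rho\otimes\lambda]^2=\dfrac{I_\lambda^2-4}{I_{\widehat{\mu}}^2-4}\bigl(dI_{\widehat{\mu}}/dI_\lambda\bigr)^2$, where $dI_{\widehat{\mu}}/dI_\lambda$ is the scalar relating the two cotangent vectors in the one-dimensional space $H_1(\knotexterior;\sll)$. This is the asserted identity.

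The main obstacle is that at $\chi_{\rho'}$ itself the peripheral holonomy is central: since $\rho'$ is metabelian one has $\rho'(\lambda)=\I$ and $\rho'(\mu^2)=-\I$, so $I_\lambda=2$, $I_{\widehat{\mu}}=-2$, and the eigenvalue factor $z-z^{-1}=\pm\sqrt{I_\gamma^2-4}$ vanishes; the individual relation $[P^\rho\otimes\gamma]\propto dI_\gamma/\sqrt{I_\gamma^2-4}$ degenerates into a $0/0$ expression. I would resolve this exactly as Porti does: both sides are rational functions on the smooth curve $X(\knotexterior)$ through $\chi_{\rho'}$, the identity holds at the nearby generic characters where the peripheral traces are $\neq\pm2$, and it extends to $\chi_{\rho'}$ by continuity once one checks finiteness there. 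Finiteness of the right-hand side follows because $I_\lambda^2-4$ and $I_{\widehat{\mu}}^2-4$ each vanish to first order in a local uniformizer at $\chi_{\rho'}$, so their ratio, and hence the whole expression, has a well-defined limit. This controlled cancellation is precisely the reason the correction enters as a rational function on the character variety rather than as a naive pointwise value, as anticipated in the introduction.
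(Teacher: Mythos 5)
The paper does not actually prove this lemma: it is imported verbatim from Porti's memoir (Proposition~4.7 there), so there is no in-paper argument to compare against. Your reconstruction is, in substance, Porti's own proof: identify $H_1(\knotexterior;\sll)$ with the cotangent space via the trace form, observe that under this identification the peripheral cycle $a_\gamma\otimes\gamma$ (with $a_\gamma$ the invariant axis of $Ad_{\rho(\gamma)}$) corresponds to $dI_\gamma$ up to the factor $\tfrac{z-z^{-1}}{2}=\pm\tfrac12\sqrt{I_\gamma^2-4}$, use the commutativity of the peripheral subgroup to get a single axis $P^\rho$ serving both $\widehat{\mu}$ and $\lambda$ so the normalization constant cancels in the ratio, and square to remove the sign of the root. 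That is correct, and your handling of the degeneration at $\chi_{\rho'}$ (where $I_\lambda=2$, $I_{\widehat{\mu}}=-2$ and the pointwise formula is $0/0$) by working at nearby generic characters and passing to the limit is exactly the right mechanism; it is also why the paper phrases the correction term as a value of a rational function. One detail is off: the claim that $I_\lambda^2-4$ and $I_{\widehat{\mu}}^2-4$ ``each vanish to first order in a local uniformizer'' is not what happens. In the natural coordinate $I_\mu$ (a uniformizer at the metabelian character, where $I_\mu=0$) one has $I_{\widehat{\mu}}+2=I_\mu^2$ and, as the paper computes later, $I_\lambda-2=-I_\mu^2\,H(I_\mu)$ with $H(0)\neq0$; so both expressions vanish to order \emph{two}. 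What saves the argument is not the order being one but the orders being \emph{equal}, which is guaranteed precisely by the hypothesis that both $P^\rho\otimes\widehat{\mu}$ and $P^\rho\otimes\lambda$ are bases of the one-dimensional $H_1(\knotexterior;\sll)$ (equivalently $dI_{\widehat{\mu}}$ and $dI_\lambda$ are both nonzero cotangent vectors there). With that correction your proof is complete and faithful to the cited source.
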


Combining the Lemmas~\ref{lemma:determinant_MV} \&~\ref{lemma:basechange_formula}, 
we can give the torsion $\TorMV{\hbox{\ref{eqn:Mayer_Vietoris}}}$ by
the special value of the rational function on the character variety.
\begin{proposition}
  \label{prop:torsion_MV_function}
  Under the assumption of Lemma~\ref{lemma:determinant_MV},
  we can express the torsion
  $\TorMV{\hbox{\ref{eqn:Mayer_Vietoris}}}$
  for the Mayer--Vietoris sequence~(\ref{eqn:Mayer_Vietoris})
  as
  $$
  \TorMV{\hbox{\ref{eqn:Mayer_Vietoris}}}
  =
  \left.
  \frac{I_\lambda^2 -4}{I_{\widehat{\mu}}^2 -4}
  \left(\frac{d I_{\widehat{\mu}}}{dI_\lambda}\right)^2
  \right|_{\chi = \chi_{\rho'}}.
  $$
\end{proposition}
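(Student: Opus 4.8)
The plan is to reduce the torsion to a single $2\times2$ determinant and then evaluate that determinant with the change-of-loops formula. By Lemma~\ref{lemma:determinant_MV} we already know that $\TorMV{\hbox{\ref{eqn:Mayer_Vietoris}}}$ equals $\det T$, the determinant of the base change matrix $T$ relating $\{P^\rho\otimes\lambda,\,Q^\rho\otimes\lambda\}$ and $\{i_*(P^\rho\otimes\widehat\mu),\,i_*(Q^\rho\otimes\widehat\mu)\}$ in $H_1(\coverExterior;\sll)$. So the entire problem reduces to computing this determinant and matching it with the right hand side of Lemma~\ref{lemma:basechange_formula}.

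First I would use the decomposition
$$
H_1(\coverExterior;\sll)\simeq H_1(\knotexterior;\sll)\oplus H_1(\knotexterior;\sll\otimes\C_{-1})
$$
of Proposition~\ref{prop:decomp_twistedhomology_coverExterior} to locate the four cycles. With respect to the representation $\psi_2$ governing the first summand, $P^\rho$ spans the line of $V_2$ on which $\pi_1(\boundaryTorus)$ acts trivially, so that $P^\rho\otimes\lambda$ and $i_*(P^\rho\otimes\widehat\mu)$ survive to nonzero classes there while the $Q^\rho$-cycles die; for the representation $\signrep\psi_2$ governing the second summand the roles of $P^\rho$ and $Q^\rho$ are interchanged, as already recorded after Proposition~\ref{prop:lambdabasis_twistedhomology}. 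Hence $T$ is diagonal and
$$
\det T=[P^\rho\otimes\widehat\mu/P^\rho\otimes\lambda]\cdot[Q^\rho\otimes\widehat\mu/Q^\rho\otimes\lambda],
$$
each factor being a ratio of generators of a one-dimensional space isomorphic to $H_1(\knotexterior;\sll)$, which is one-dimensional by the $\lambda$-regularity of $\rho'$.

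The crucial step is to prove that the two diagonal entries agree. Here I would invoke that $\signrep\rho$ is conjugate to $\rho$ (Proposition~\ref{prop:fixed_points_involution}): a matrix $C_0\in\SL$ with $C_0\rho C_0^{-1}=\signrep\rho$ conjugates $\psi_2$ to $\signrep\psi_2$ and thus induces an isomorphism between the two summands. This isomorphism leaves the loops $\widehat\mu$ and $\lambda$ unchanged and identifies the distinguished generator $P^\rho$ of the first summand with the distinguished generator $Q^\rho$ of the second, so the $Q^\rho$-ratio equals the $P^\rho$-ratio. Therefore $\det T=[P^\rho\otimes\widehat\mu/P^\rho\otimes\lambda]^2$, and Lemma~\ref{lemma:basechange_formula} evaluates this square as the asserted rational function at $\chi=\chi_{\rho'}$, which finishes the proof. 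The main obstacle is exactly this middle step: one must follow each cycle carefully through the direct-sum decomposition and confirm, using the symmetry $\signrep\psi_2\cong\psi_2$ realised by $C_0$, that the two entries genuinely coincide --- rather than agreeing only up to a sign or a complex conjugation --- so that $\det T$ becomes the perfect square to which Porti's single-loop formula applies.
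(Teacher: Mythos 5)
Your proposal is correct and follows essentially the same route as the paper: reduce to $\det T$ via Lemma~\ref{lemma:determinant_MV}, locate the $P^\rho$- and $Q^\rho$-cycles in the two summands of the decomposition from Proposition~\ref{prop:decomp_twistedhomology_coverExterior}, use the conjugation between $\signrep\rho$ and $\rho$ (Proposition~\ref{prop:fixed_points_involution}) to identify the two diagonal ratios, and then apply Porti's change-of-loops formula (Lemma~\ref{lemma:basechange_formula}) to the resulting square. The only difference is that you are somewhat more explicit than the paper about verifying that the two entries coincide on the nose rather than up to sign, which is a welcome refinement rather than a divergence.
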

\begin{proof}
  We have the isomorphism induced by the conjugation between $\signrep \rho$ and $\rho$:
  $$
  H_1(\knotexterior;V_2 \otimes \C_{-1}) \xrightarrow{\hbox{\small isom.}}
  H_1(\knotexterior;V_2) (\simeq H_1(\knotexterior;\sll))
  $$
  where we can identify $H_1(\knotexterior;V_2)$ with $H_1(\knotexterior;\sll)$ by
  Proposition~\ref{prop:decomp_twistedhomology_coverExterior}.
  Under this isomorphism, the vector $Q^\rho \otimes \widehat{\mu}$ (resp. $Q^\rho \otimes \lambda$)
  in $H_1(\knotexterior;V_2 \otimes \C_{-1})$ is sent to
  the vector $P^\rho \otimes \widehat{\mu}$ (resp. $P^\rho \otimes \lambda$)
  in $H_1(\knotexterior;\sll)$.
  Therefore the determinant of the base change matrix $T$ in Lemma~\ref{lemma:determinant_MV}
  turns into the square of determinant of base change matrix,
  given by Eq.~\eqref{eqn:square_det_basechange_matrix}.
\end{proof}

\subsection{Main theorem}
\label{subsect:Main_theorem}
From the results in the previous two SubSections~\ref{subsect:Torsion_cyclic_covers_knotexteriors}
\&~\ref{subsect:Torsion_Mayer_Vietoris_sequence},
we can rewrite the 
right hand side of the equality in Proposition~\ref{prop:Torsion_Sigma2}
as quantity determined by the knot exterior $\knotexterior$ and $\knotgroup$.

\begin{theorem}
  \label{thm:main_theorem}
  We suppose that $H_*(\Sigma_2;\C_\xi) = 0$ and the assumption of Lemma~\ref{lemma:determinant_MV}.
  Then we have the following equality:
  \begin{equation}
    \label{eqn:Main_Formula}
    |\Tor{\Sigma_2}{\xi}|^2 =
    \pm
    \left(
    \lim_{t \to 1} \frac{\twistedAlex[\sqrt{-1}\,t]{\knotexterior}{\rho}}{t^2 -1}
    \right)^2
    \left.
    \frac{I_\lambda^2 -4}{I_{\widehat{\mu}}^2 -4}
    \left(\frac{d I_{\widehat{\mu}}}{dI_\lambda}\right)^2
    \right|_{\chi = \chi_{\rho'}}.  
  \end{equation}
\end{theorem}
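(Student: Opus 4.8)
The plan is to read off the right-hand side of Proposition~\ref{prop:Torsion_Sigma2} using the two computations already in hand: Proposition~\ref{prop:limit_twistedAlex}, which rewrites $\Tor{\coverExterior}{\sll}$ as $\pm\lim_{t\to1}\twistedAlexAd{\coverExterior}{\rho'}/(t^2-1)$, and Proposition~\ref{prop:torsion_MV_function}, which identifies $\TorMV{\hbox{\ref{eqn:Mayer_Vietoris}}}$ with the rational function in the statement. The only link still missing is an expression of the cover's twisted Alexander polynomial $\twistedAlexAd{\coverExterior}{\rho'}$ through the twisted Alexander polynomial $\twistedAlex[\sqrt{-1}\,t]{\knotexterior}{\rho}$ of the knot exterior and the abelian order $|H_1(\Sigma_2;\Z)|$. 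Establishing that factorisation is the heart of the argument; the rest is substitution.

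To factorise, I would lift the coefficient splitting of Proposition~\ref{prop:decomp_twistedhomology_coverExterior} to the level of the variable-$t$ chain complex. By Lemma~\ref{lemma:Rtorsion_coverExterior_variable} the complex $C_*(\coverExterior;\sllt)$ is the direct sum of the complex of $\knotexterior$ twisted by $\alpha\otimes Ad\circ\rho'$ and the one twisted by $\alpha\otimes\signrep(Ad\circ\rho')$; and by Lemma~\ref{lemma:adjoint_metabelian} each of these refines, through $Ad\circ\rho'=\signrep\oplus\sqrtrep\rho$, into a one-dimensional piece and a two-dimensional piece. Since torsion is multiplicative over such direct sums, $\twistedAlexAd{\coverExterior}{\rho'}$ becomes a product of four factors. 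The two abelian factors are the one-dimensional twisted Alexander polynomials for $\mu\mapsto -t$ and $\mu\mapsto t$, namely $\Delta_K(-t)/(-t-1)$ and $\Delta_K(t)/(t-1)$. The two remaining factors come from $\alpha\otimes\sqrtrep\rho$ and $\alpha\otimes\signrep\sqrtrep\rho$; because $\signrep\rho$ is conjugate to $\rho$ (Proposition~\ref{prop:fixed_points_involution}) the scalar $\signrep$ is absorbed by the conjugating matrix, so both factors equal $\twistedAlex[\sqrt{-1}\,t]{\knotexterior}{\rho}$. Hence
\begin{equation*}
  \twistedAlexAd{\coverExterior}{\rho'}
  = \frac{\Delta_K(-t)}{-t-1}\cdot\frac{\Delta_K(t)}{t-1}\cdot
  \left(\twistedAlex[\sqrt{-1}\,t]{\knotexterior}{\rho}\right)^2.
\end{equation*}

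Next I would divide by $t^2-1$ and take the limit. Using $(-t-1)(t-1)=-(t^2-1)$ to cancel one factor of $t^2-1$, the quotient equals $-\Delta_K(-t)\Delta_K(t)\bigl(\twistedAlex[\sqrt{-1}\,t]{\knotexterior}{\rho}/(t^2-1)\bigr)^2$; letting $t\to1$ and using $\Delta_K(1)=\pm1$ gives $\lim_{t\to1}\twistedAlexAd{\coverExterior}{\rho'}/(t^2-1)=\pm\Delta_K(-1)\,P(1)^2$, where $P(1)$ is the limit appearing in the statement. Since $|H_1(\Sigma_2;\Z)|=|\Delta_K(-1)|$, the factor $1/|H_1(\Sigma_2;\Z)|$ of Proposition~\ref{prop:Torsion_Sigma2} cancels $|\Delta_K(-1)|$ exactly, leaving $|\Tor{\Sigma_2}{\xi}|^2=|P(1)|^2\,|\TorMV{\hbox{\ref{eqn:Mayer_Vietoris}}}|$. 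Inserting the value of $\TorMV{\hbox{\ref{eqn:Mayer_Vietoris}}}$ from Proposition~\ref{prop:torsion_MV_function} then yields Eq.~\eqref{eqn:Main_Formula}.

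I expect the main obstacle to be the bookkeeping in the factorisation step rather than any single hard estimate: one must check that the paper's torsion normalisation of the one-dimensional pieces really produces $\Delta_K(\pm t)/(\pm t-1)$, and, more delicately, that the conjugacy $\signrep\rho\cong\rho$ survives after tensoring with $\alpha$ and the scalar $\sqrtrep$, so that the two two-dimensional factors are genuinely equal as rational functions, not merely up to the units $\pm t^k$ modulo which torsion is defined. A consistency check supports this: $\twistedAlex[\sqrt{-1}\,t]{\knotexterior}{\rho}$ must have a simple zero at $t=1$ for $P(1)$ to be finite, and then the displayed product has exactly a simple zero at $t=1$, so $\twistedAlexAd{\coverExterior}{\rho'}/(t^2-1)$ has a finite nonzero limit, in agreement with the acyclic reduction behind Proposition~\ref{prop:limit_twistedAlex}.
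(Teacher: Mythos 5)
Your proposal is correct, and its skeleton coincides with the paper's: both start from Proposition~\ref{prop:Torsion_Sigma2} and substitute Propositions~\ref{prop:limit_twistedAlex} and~\ref{prop:torsion_MV_function}, so the whole content is the factorisation of $\twistedAlexAd{\coverExterior}{\rho'}$. Where you diverge is in how that factorisation is obtained. The paper imports it as three black-box results: the covering formula $\twistedAlexAd{\coverExterior}{\rho'}=\twistedAlexAd{\knotexterior}{\rho'}\twistedAlexAd[-t]{\knotexterior}{\rho'}$ (Lemma~\ref{lemma:covering_formula}), the metabelian factorisation $\twistedAlexAd{\knotexterior}{\rho'}=\pm(t-1)\Delta_K(-t)P(t)$ with $P(t)=P(-t)$ (Lemma~\ref{lemma:twistedAlex_Metabelian}), and the identification $P(t)=\twistedAlex[\sqrt{-1}\,t]{\knotexterior}{\rho}/(t^2-1)$ (Lemma~\ref{lemma:polynomial_P}); it then cancels $\Delta_K(1)\Delta_K(-1)=\pm|H_1(\Sigma_2;\Z)|$ via Fox's formula exactly as you do. You instead re-derive the factorisation internally by lifting the four-fold coefficient splitting (the $f_{\pm1}$-decomposition of Lemma~\ref{lemma:decomp_twistedcomplex_coverExterior} refined by $Ad\circ\rho'=\signrep\oplus\sqrtrep\rho$) to the $\C(t)$-level, identifying the two rank-one factors with $\Delta_K(\mp t)/(\mp t-1)$ via Milnor's theorem and the two rank-two factors with $\twistedAlex[\sqrt{-1}\,t]{\knotexterior}{\rho}$ via the conjugacy $\signrep\rho\cong\rho$ of Proposition~\ref{prop:fixed_points_involution}; multiplying out, your expression agrees with the paper's product $\pm(t^2-1)\Delta_K(t)\Delta_K(-t)P(t)^2$. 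Your route is more self-contained (it needs only Milnor's torsion--Alexander identification and Kirk--Livingston, both already invoked elsewhere in the paper, rather than the external Theorems~4.5 and~4.7 of the metabelian paper), at the cost of the bookkeeping you flag; the two worries you raise are genuinely the only delicate points, and both are harmless here, since conjugate representations have equal torsions and the residual $\pm t^k$ unit ambiguity evaluates to $\pm1$ at $t=1$, which the $\pm$ in Eq.~\eqref{eqn:Main_Formula} absorbs.
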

Theorem~\ref{thm:main_theorem} follows from the following three Lemmas 
which express the twisted Alexander polynomial
$\twistedAlexAd{\coverExterior}{\rho'}$
by that of the knot exterior $\knotexterior$.
\begin{lemma}[Theorem $4.1$ in~\cite{DuboisYamaguchi:abelianCovering}]
  \label{lemma:covering_formula}
  $$
  \twistedAlexAd{\coverExterior}{\rho'}
  = \twistedAlexAd{\knotexterior}{\rho'}
  \twistedAlexAd[-t]{\knotexterior}{\rho'}
  $$
\end{lemma}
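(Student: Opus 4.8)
The plan is to exploit the decomposition of the twisted chain complex of the double cover over the covering transformation group $G = \langle g \mid g^2 = 1 \rangle$, exactly as in Lemma~\ref{lemma:decomp_twistedcomplex_coverExterior}, but carried out over the coefficient field $\C(t)$ rather than $\C$. First I would record the splitting
$$
C_*(\coverExterior;\sllt) \simeq
C_*(\knotexterior;\C(t) \otimes \sll \otimes \C[f_1]) \oplus C_*(\knotexterior;\C(t) \otimes \sll \otimes \C[f_{-1}])
$$
where $f_{\pm 1} = (1 \pm g)/2$ are the idempotents projecting onto the $\pm 1$-eigenspaces of the $G$-action, and the tensor products are taken over $\Z[\knotgroup]$ via $\alpha \otimes Ad \circ \rho'$ together with the projection $\knotgroup \to G$. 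This is the variable-$t$ analogue of the decomposition already used in Lemma~\ref{lemma:Rtorsion_coverExterior_variable}, and it follows verbatim from the argument of~\cite[Lemma~4.3]{DuboisYamaguchi:abelianCovering}.

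The second step is to identify each summand with a twisted chain complex of the base $\knotexterior$. The $f_1$-summand is literally $C_*(\knotexterior;\sllt)$ defined by $\alpha \otimes Ad \circ \rho'$, whose torsion is by definition $\twistedAlexAd{\knotexterior}{\rho'}$. The $f_{-1}$-summand is the twisted complex of $\knotexterior$ for the representation $\alpha \otimes \signrep (Ad \circ \rho')$, because $g$ acts as $-1$ on $\C[f_{-1}]$ and thereby pulls the sign character $\signrep$ into the coefficients. The crucial observation is that this extra sign merges with the variable: since $\alpha(\mu) = t$ and $\signrep(\mu) = -1$, the composite scalar acting through $\mu$ is $(-t)^{[\,\cdot\,]}$, so the representation $\alpha \otimes \signrep (Ad \circ \rho')$ is obtained from $\alpha \otimes Ad \circ \rho'$ by substituting $t \mapsto -t$. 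Consequently the torsion of the $f_{-1}$-summand is $\twistedAlexAd[-t]{\knotexterior}{\rho'}$.

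Finally, I would invoke the multiplicativity of Reidemeister torsion over a direct sum of based, acyclic chain complexes. Acyclicity of both summands is already established for $\lambda$-regular $\rho'$ in the proof of Lemma~\ref{lemma:Rtorsion_coverExterior_variable}, and the chosen basis of $C_*(\coverExterior;\sllt)$ is the union of the bases of the two summands, so no correction factor appears. Multiplying the two torsions then yields
$$
\twistedAlexAd{\coverExterior}{\rho'}
= \twistedAlexAd{\knotexterior}{\rho'} \cdot \twistedAlexAd[-t]{\knotexterior}{\rho'}
$$
up to the usual unit $\pm t^k$ under which the twisted Alexander polynomial is defined.

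The main obstacle I expect is the bookkeeping in the second step: one must verify precisely that the sign twist $\signrep$ is interchangeable with the substitution $t \mapsto -t$ at the level of the module structure, not merely as scalar values on $\mu$. Making this rigorous requires tracking how $\signrep$ and $\alpha$ combine on every group element of $\knotgroup$, and confirming that the identification of the $f_{-1}$-summand with the base complex is an isomorphism of \emph{based} $\C(t)$-complexes compatible with the boundary operators, so that the equality of torsions holds on the nose modulo the unavoidable $\pm t^k$ ambiguity.
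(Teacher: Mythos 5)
Your argument is correct and is essentially the same proof the paper relies on (via the cited Theorem~4.1 of Dubois--Yamaguchi): the $\C[G]$-idempotent splitting into the $f_{\pm 1}$-summands is exactly the decomposition carried out over $\C(t)$ in the proof of Lemma~\ref{lemma:Rtorsion_coverExterior_variable}, and your identification of the $f_{-1}$-summand with the $t\mapsto -t$ substituted complex via $\signrep\cdot\alpha = (-t)^{[\,\cdot\,]}$ is precisely the key point there. Nothing essential is missing.
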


\begin{lemma}[Theorem~$4.5$ in~\cite{yamaguchi:twistedAlexMeta}]
  \label{lemma:twistedAlex_Metabelian}
  \begin{equation}
    \label{eqn:twistedAlex_metabelian_adjoint}
    \twistedAlexAd{\knotexterior}{\rho'}
    = \pm (t-1)\Delta_K(-t) P(t)
  \end{equation}
  where $\Delta_K(t)$ is the Alexander polynomial of $K$ and
  the Laurent polynomial $P(t)$ satisfies that $P(t)=P(-t)$.
\end{lemma}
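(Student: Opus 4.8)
The plan is to decompose the adjoint representation $Ad\circ\rho'$ into its one- and two-dimensional summands and to analyze the two resulting twisted Alexander polynomials separately, using the multiplicativity of Reidemeister torsion under a direct sum of coefficient systems. By Lemma~\ref{lemma:adjoint_metabelian} and Definition~\ref{def:irred_metarep_form} we have $Ad\circ\rho' = \psi_1\oplus\psi_2$ with $\psi_1 = \signrep$ and $\psi_2 = \sqrtrep\rho$ on the ordered basis $\{H,E,F\}$ of $\sll$. The twisted chain complex $C_*(\knotexterior;\C(t)\otimes\sll)$ therefore splits as the direct sum of the complexes with coefficients $\C(t)_{\alpha\otimes\psi_1}$ and $(\C(t)\otimes V_2)_{\alpha\otimes\psi_2}$, and multiplicativity of the torsion yields
\begin{equation*}
  \twistedAlexAd{\knotexterior}{\rho'} = \twistedAlex{\knotexterior}{\psi_1}\cdot\twistedAlex{\knotexterior}{\psi_2}.
\end{equation*}
It then suffices to identify each factor.

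The one-dimensional factor is immediate. Since $\psi_1 = \signrep$ sends $\mu$ to $-1$, the coefficient $\alpha\otimes\psi_1$ sends $\mu$ to $-t$, so $\twistedAlex{\knotexterior}{\psi_1}$ is the abelian Reidemeister torsion of $\knotexterior$ in the variable $-t$, namely $\pm\Delta_K(-t)/(t+1)$ up to the usual factor $\pm t^{k}$.

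The two-dimensional factor is the heart of the matter. Because $t^{[\gamma]}(\sqrt{-1})^{[\gamma]} = (\sqrt{-1}\,t)^{[\gamma]}$ and twisted Alexander polynomials are conjugation invariant, we first rewrite $\twistedAlex{\knotexterior}{\psi_2} = \twistedAlex[\sqrt{-1}\,t]{\knotexterior}{\rho}$, the very factor appearing in Theorem~\ref{thm:main_theorem}. I claim this equals $\pm(t^2-1)P(t)$ with $P(t)=P(-t)$. The evenness follows cleanly from the involution symmetry: by Proposition~\ref{prop:fixed_points_involution} the irreducible metabelian $\rho$ is conjugate to $\signrep\rho$, so $\twistedAlex{\knotexterior}{\rho} = \twistedAlex[-t]{\knotexterior}{\rho}$, and substituting $t\mapsto\sqrt{-1}\,t$ shows $\twistedAlex{\knotexterior}{\psi_2}$ is invariant under $t\mapsto -t$. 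For the factor $t^2-1$, note that $\alpha\otimes\psi_2(\mu)$ has eigenvalues $\pm t$, so the coefficient system degenerates exactly at $t=\pm 1$; there the specialized homology $H_1(\knotexterior;V_2)\cong H_1(\knotexterior;\sll)$ is one-dimensional by $\lambda$-regularity (Proposition~\ref{prop:decomp_twistedhomology_coverExterior} together with Definition~\ref{def:regularity_rep}), which accounts for a zero at $t=1$ and, by evenness, at $t=-1$.

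Assembling the two factors gives
\begin{equation*}
  \twistedAlexAd{\knotexterior}{\rho'} = \pm\frac{\Delta_K(-t)}{t+1}\cdot(t^2-1)P(t) = \pm(t-1)\Delta_K(-t)P(t),
\end{equation*}
as stated, the denominator $t+1$ of the abelian factor being absorbed by the factor $t^2-1$ of the two-dimensional one. The genuine obstacle is this second factor: showing that $\twistedAlex{\knotexterior}{\psi_2}$ is an honest Laurent polynomial divisible by $t^2-1$ — rather than merely a rational function whose zeros and poles happen to sit at $t=\pm1$ — is where the metabelian hypothesis must be used in force. This requires the explicit normal form of $\rho$ on a presentation of $\knotgroup$ and a direct Fox-calculus evaluation of Wada's invariant, carried out in~\cite{yamaguchi:twistedAlexMeta}; the multiplicativity and the involution symmetry above are the clean structural inputs, while the polynomiality and the exact order of vanishing are what the computation there supplies.
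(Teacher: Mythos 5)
The paper itself gives no proof of this lemma: it is imported verbatim as Theorem~4.5 of~\cite{yamaguchi:twistedAlexMeta}, so the only meaningful comparison is with the argument of that reference. Your outline reproduces its structure faithfully: the decomposition $Ad\circ\rho'=\psi_1\oplus\psi_2$ from Lemma~\ref{lemma:adjoint_metabelian}, multiplicativity of the torsion over the direct sum of coefficient systems, identification of the abelian factor with $\pm\Delta_K(-t)/(t+1)$, and identification of the two--dimensional factor with $\twistedAlex[\sqrt{-1}\,t]{\knotexterior}{\rho}$. A pleasant side effect is that your derivation yields Lemma~\ref{lemma:polynomial_P} simultaneously, whereas the paper imports it as a separate result. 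The involution argument for the symmetry of $P$ is the right mechanism, though strictly it only gives $P(t)=P(-t)$ up to the unit ambiguity $\pm t^{k}$; the paper itself appeals to Herald--Kirk--Livingston for the sharper statement that $P$ has only even-degree terms.

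The one genuine gap is the one you flag yourself: that $\twistedAlex[\sqrt{-1}\,t]{\knotexterior}{\rho}$ is an honest Laurent polynomial divisible by $t^2-1$. The homological observation that $\alpha\otimes\psi_2(\mu)$ has eigenvalue $1$ at $t=\pm1$, so that $H_1(\knotexterior;V_2)$ becomes one--dimensional there, does not by itself control the order of vanishing of Wada's invariant, which is a ratio of two determinants: the denominator $\det\bigl(\sqrt{-1}\,t\,\rho(y)-\I\bigr)=1-t^2$ already vanishes at $t=\pm1$ (since $\trace\rho(y)=0$), so one must show the numerator vanishes to order at least two at each of $t=\pm1$ before concluding that the quotient is a polynomial with a simple zero there. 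That is precisely what the explicit metabelian normal form and the Fox-calculus computation in~\cite{yamaguchi:twistedAlexMeta} supply; without it, the factorization~\eqref{eqn:twistedAlex_metabelian_adjoint} could a priori fail to have $P$ a Laurent polynomial. Since you correctly localize this missing step and everything surrounding it is sound, the proposal is an accurate reconstruction of the intended proof rather than a self-contained substitute for it.
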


\begin{lemma}[Theorem~$4.7$ in~\cite{yamaguchi:twistedAlexMeta}]
  \label{lemma:polynomial_P}
  The Laurent polynomial $P(t)$ in
  Eq.~(\ref{eqn:twistedAlex_metabelian_adjoint})
  is also given by the twisted Alexander polynomial for the standard representation of $\rho$
  as
  $$
  P(t) =
  \frac{
    \twistedAlex[\sqrt{-1}\, t]{\knotexterior}{\rho}
  }{
    t^2 -1
  }
  $$
  up to a factor $\pm t^k \,(k \in \Z)$.
\end{lemma}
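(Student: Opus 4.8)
The plan is to exploit the direct-sum decomposition $Ad \circ \rho' = \signrep \oplus C\,\sqrtrep \rho\, C^{-1}$ of Lemma~\ref{lemma:adjoint_metabelian} together with the multiplicativity of the twisted Alexander polynomial, viewed as a Reidemeister torsion, under direct sums of the coefficient module. Writing $\sll = V_1 \oplus V_2$ with $V_1 = \langle H \rangle$ and $V_2 = \langle E, F \rangle$, the coefficient $\sllt$ splits compatibly with the $\knotgroup$-action $\alpha \otimes (Ad \circ \rho')$, so the chain complex $C_*(\knotexterior; \sllt)$ decomposes as the direct sum of the complex for the $1$-dimensional summand, on which $\knotgroup$ acts by $\alpha \otimes \signrep$, and the complex for the $2$-dimensional summand, on which it acts by $\alpha \otimes \sqrtrep \rho$ (the conjugation by $C$ being a change of basis that leaves the torsion unchanged). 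First I would record that each summand is therefore acyclic over $\C(t)$ and that, up to the usual factor $\pm t^k$,
$$
\twistedAlexAd{\knotexterior}{\rho'}
= \TorCpx{C_*(\knotexterior; \C(t) \otimes V_1)} \cdot \TorCpx{C_*(\knotexterior; \C(t) \otimes V_2)}.
$$

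Next I would identify the two factors. On $V_2$ the action sends $\gamma$ to $(\sqrt{-1}\,t)^{[\gamma]}\rho(\gamma)$, which is exactly $\alpha \otimes \rho$ with the variable $t$ replaced by $\sqrt{-1}\,t$; hence the second factor equals $\twistedAlex[\sqrt{-1}\,t]{\knotexterior}{\rho}$. On $V_1$ the action is the abelian representation $\mu \mapsto -t$, and the standard computation of the Reidemeister torsion of a knot exterior with abelian coefficients gives the first factor as $\pm \Delta_K(-t)/(t+1)$, where $\Delta_K$ is the classical Alexander polynomial. Combining these,
$$
\twistedAlexAd{\knotexterior}{\rho'}
= \pm \frac{\Delta_K(-t)}{t+1}\,\twistedAlex[\sqrt{-1}\,t]{\knotexterior}{\rho}.
$$

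Finally I would compare with the factorization of Lemma~\ref{lemma:twistedAlex_Metabelian}, namely $\twistedAlexAd{\knotexterior}{\rho'} = \pm(t-1)\Delta_K(-t)P(t)$. Since $\Delta_K(-t)$ is a nonzero element of $\C(t)$ it cancels, and equating the two expressions gives $(t-1)P(t) = \pm \twistedAlex[\sqrt{-1}\,t]{\knotexterior}{\rho}/(t+1)$, that is $P(t) = \pm \twistedAlex[\sqrt{-1}\,t]{\knotexterior}{\rho}/(t^2-1)$ up to $\pm t^k$, which is the assertion. The main obstacle is the first step: one must check carefully that the splitting $\sll = V_1 \oplus V_2$ really descends to a direct-sum decomposition of $\Z[\knotgroup]$-modules for the twisted coefficient over $\C(t)$, so that the multiplicativity of torsion applies, and one must keep track of the $\pm t^k$ indeterminacy throughout. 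The abelian torsion computation and the change-of-variable identification are then routine.
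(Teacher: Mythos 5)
Your argument is correct, and it is essentially the proof of the cited result (the paper itself offers no proof here, quoting the statement as Theorem~4.7 of the reference): split $C_*(\knotexterior;\sllt)$ along $\sll = V_1 \oplus V_2$ via Lemma~\ref{lemma:adjoint_metabelian}, identify the two torsion factors as $\pm\Delta_K(-t)/(t+1)$ and $\twistedAlex[\sqrt{-1}\,t]{\knotexterior}{\rho}$, and cancel against Lemma~\ref{lemma:twistedAlex_Metabelian}. The acyclicity point you flag is handled by noting that the homology of the direct sum is the direct sum of the homologies, so acyclicity of the total complex (implicit in Lemma~\ref{lemma:twistedAlex_Metabelian}) forces acyclicity of each summand.
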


Note that it follows that $P(t)$ has only even degree terms from Lemma~\ref{lemma:polynomial_P}
and the result of C.~Herald, P.~Kirk and C.~Livingston~\cite{HeraldKirkLivingston2010} 
(we refer to~\cite[Remark~$4.8$]{yamaguchi:twistedAlexMeta} for the details).
\begin{proof}[Proof of Theorem~\ref{thm:main_theorem}]
  We start with the equality of Proposition~\ref{prop:Torsion_Sigma2}:
  $$
  |\Tor{\Sigma_2}{\C_\xi}|^2
  = \frac{\pm 1}{|H_1(\Sigma_2;\Z)|} \cdot
  \Tor{\coverExterior}{\sll} \cdot
  \TorMV{\hbox{\ref{eqn:Mayer_Vietoris}}}
  $$
  By Propositions~\ref{prop:limit_twistedAlex} \&~\ref{prop:torsion_MV_function},
  we can rewrite the above equality as follows:
  \begin{align*}
    |\Tor{\Sigma_2}{\C_\xi}|^2
    =
    \frac{\pm 1}{|H_1(\Sigma_2;\Z)|} \cdot
    \left(
    \lim_{t \to 1}  \frac{\twistedAlexAd{\coverExterior}{\rho'}}{t^2-1} 
    \right)\cdot
    \left.
    \frac{I_\lambda^2 -4}{I_{\widehat{\mu}}^2 -4}
    \left(\frac{d I_{\widehat{\mu}}}{dI_\lambda}\right)^2
    \right|_{\chi = \chi_{\rho'}}
  \end{align*}
  Substituting Lemmas~\ref{lemma:covering_formula} \&~\ref{lemma:twistedAlex_Metabelian},
  the right hand side turns into
  $$
  \frac{\pm 1}{|H_1(\Sigma_2;\Z)|} \cdot
  \left(
  \lim_{t \to 1}
  \Delta_K(t) \Delta_K(-t) P(t)P(-t)
  \right)\cdot
  \left.
  \frac{I_\lambda^2 -4}{I_{\widehat{\mu}}^2 -4}
  \left(\frac{d I_{\widehat{\mu}}}{dI_\lambda}\right)^2
  \right|_{\chi = \chi_{\rho'}}.
  $$
  Fox's formula shows that $\Delta_K(1)\Delta_K(-1) = \pm |H_1(\Sigma_2;\Z)|$ and
  the Laurent polynomial $P(t)$ satisfies $P(t)=P(-t)$.
  Applying Lemma~\ref{lemma:polynomial_P}, we obtain the desired equality:
  $$
  |\Tor{\Sigma_2}{\C_\xi}|^2
  = \pm
  \left(
  \lim_{t \to 1}
  \frac{\twistedAlex[\sqrt{-1}\, t]{\knotexterior}{\rho}}{t^2-1}
  \right)^2\cdot
  \left.
  \frac{I_\lambda^2 -4}{I_{\widehat{\mu}}^2 -4}
  \left(\frac{d I_{\widehat{\mu}}}{dI_\lambda}\right)^2
  \right|_{\chi = \chi_{\rho'}}.  
  $$
\end{proof}

\begin{remark}
  The assumption that $C_*(\Sigma_2;\C_\xi)$ is acyclic is equivalent to
  the condition for an irreducible metabelian representation $\rho'$ to be $\widehat{\mu}$-regular
  from Propositions~\ref{prop:twistedhomology_coveringspaces},
  \ref{prop:basis_twistedhomology_coverExterior} \&~\ref{prop:decomp_twistedhomology_coverExterior}.
  Actually, this is equivalent to that $\rho'$ is $\mu$-regular
  since the base change formula~\eqref{eqn:square_det_basechange_matrix}
  between $\widehat{\mu}$ and $\mu$ is $4$.
\end{remark}

\begin{remark}
  The assumption of Lemma~\ref{lemma:determinant_MV} requires
  that $\rho'$ is $\lambda$-regular.
  It is a sufficient condition
  for each factor in the right hand side of Eq.~\eqref{eqn:Main_Formula} to be well--defined as
  a complex number.
\end{remark}

\begin{remark}
  The left hand side of Eq.~\eqref{eqn:Main_Formula} is well--defined as a topological invariant of $\Sigma_2$
  when $H_*(\Sigma_2;\C_\xi)=\bm{0}$.
  Theorem~\ref{thm:main_theorem} shows that the product in the right hand side of~\eqref{eqn:Main_Formula}
  is independent of the choice of basis of $H_*(\knotexterior;\sll)$, 
  even though each factor depends on such a choice, 
  and gives a topological invariant of $\Sigma_2$, which coincides with the square of absolute value of
  Reidemeister torsion defined by a non--trivial $\GL_1(\C)$-representation $\xi$.
\end{remark}

\section{Application to two--bridge knots}
\label{sec:application}
We discuss the twisted Alexander polynomial for irreducible metabelian representations and 
the rational function 
$\frac{I_\lambda^2 -4}{I_{\widehat{\mu}}^2 -4}
\left(\frac{d I_{\widehat{\mu}}}{dI_\lambda}\right)^2$ 
on the character varieties
for two--bridge knots.
Subsection~\ref{subsec:Torus_knots} deals with the non--hyperbolic two--bridge knots
and we will discuss the case of hyperbolic two--bridge knots 
in Subsection~\ref{subsec:hyperbolic_twobridge}.

\subsection{Non--hyperbolic two bridge knots}
\label{subsec:Torus_knots}
From~\cite{HatcherThurston85} it is known that two--bride knots have no satellite knots
and hence every non--hyperbolic two--bridge knot is a torus knot of type $(2, q)$ where
$q$ is an odd integer.
We give explicit forms for the Reidemeister torsion $\Tor{\knotexterior}{\sll}$ 
and the rational function 
$\frac{I_\lambda^2 -4}{I_{\widehat{\mu}}^2 -4}
\left(\frac{d I_{\widehat{\mu}}}{dI_\lambda}\right)^2$ 
on the character varieties for $(2, q)$-torus knots.
For simplicity, we assume that $q$ is a positive odd integer.

We start with the description of character varieties of torus knots,
according to the lecture note of D.~Johnson~\cite{Johnson:unpublished}.
Here we adopt the following presentation of the knot group for the $(p,q)$-torus knot $K$:
$$
\knotgroup = \langle x, y \,|\, x^p=y^q \rangle.
$$

\begin{figure}[ht]
  \begin{center}
    \includegraphics[scale=.4]{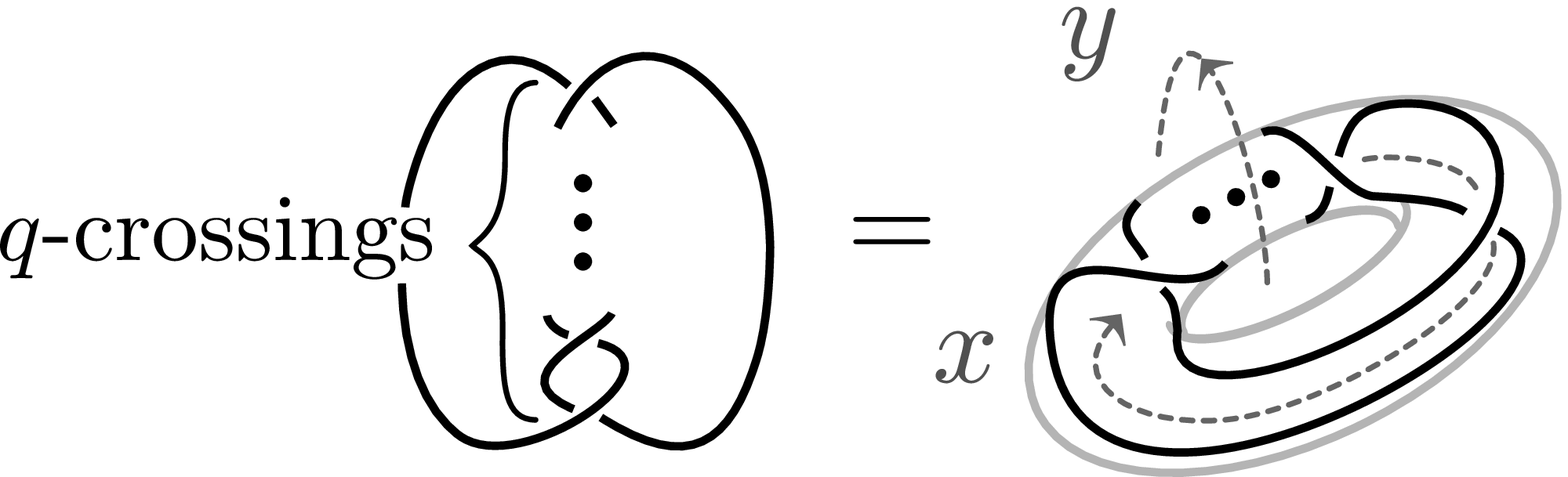}
  \end{center}
  \caption{The diagram of $(2, q)$-torus knot $K$ ($q>0$) and generators $x$ and $y$ of $\knotgroup$.}
  \label{fig:2_q_torusknot}
\end{figure}

\begin{proposition}[\cite{Johnson:unpublished},
  Proposition~$3.7$ in~\cite{KitanoMorifuji:TwistedAlexTorusKnots}]
  \label{prop:chractervar_torusknot}
  Let $K$ be the $(p, q)$-torus knot.
  Then $X^{\rm irr}(E_K)$ consists of $(p-1)(q-1)/2$ components,
  which are determined by the following data, denoted by $X_{a, b}$:
  \begin{enumerate}
  \item $0 < a < p$, $0 < b < q$.
  \item $a \equiv b$ mod $2$.
  \item For every $\chi \in X_{a, b}$, we have that 
    $\chi(x) = 2\cos\left(\frac{\pi a}{p}\right)$ and
    $\chi(y) = 2\cos\left(\frac{\pi b}{q}\right)$.
    Moreover irreducible representation $\rho$, whose character is $\chi$, sends $x^p (= y^q)$
    to $(-\I)^a$.
  \item  Each component $X_{a, b}$ is parametrized by $I_{\mu}$ 
    and the characters $\chi$ satisfying that $I_\mu (\chi) = 2 \cos (\pi (ra/p \pm sb/q))$
    are reducible where $\mu$ denotes the meridian given by $x^{-r} y^s$ and
    $r$ and $s$ are integers such that $ps - qr=1$.
  \end{enumerate}
  In particular, every $X_{a, b}$ has the complex dimension one.
\end{proposition}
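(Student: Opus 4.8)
The plan is to exploit the center of the torus knot group together with Schur's lemma to pin down the conjugacy classes of $\rho(x)$ and $\rho(y)$, and then to use the classical Fricke description of $\SL$-characters of two--generator groups to see that exactly one trace remains free on each component.

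First I would observe that $z := x^p = y^q$ is central in $\knotgroup = \langle x, y \mid x^p = y^q \rangle$, since it commutes with both generators. For an irreducible $\SL$-representation $\rho$, Schur's lemma forces $\rho(z)$ to be scalar, so $\rho(z) = \pm \I$, and hence $\rho(x)^p = \rho(y)^q = \pm \I$. Thus $\rho(x)$ and $\rho(y)$ are diagonalizable with root--of--unity eigenvalues; writing these as $e^{\pm \sqrt{-1}\,\pi a/p}$ and $e^{\pm \sqrt{-1}\,\pi b/q}$ for integers $a, b$, I obtain $\chi(x) = 2\cos(\pi a/p)$, $\chi(y) = 2\cos(\pi b/q)$, and $\rho(z) = \rho(x)^p = (-\I)^a$, which is item $(3)$. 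Irreducibility prevents either generator from being central: if $\rho(x) = \pm \I$, then the image is generated by $\rho(y)$ alone and is abelian, hence reducible; this gives $0 < a < p$ and $0 < b < q$, i.e. item $(1)$. The compatibility $\rho(x)^p = \rho(y)^q$ reads $(-\I)^a = (-\I)^b$, i.e. $a \equiv b \pmod 2$, which is item $(2)$. Since cosine is injective on $(0,\pi)$, distinct admissible pairs $(a,b)$ yield distinct values of $(\chi(x),\chi(y))$ and therefore lie on distinct components, so the components are indexed exactly by such pairs. Counting them is a short number--theoretic step: one tallies the pairs with $1 \le a \le p-1$, $1 \le b \le q-1$, $a \equiv b \pmod 2$ by splitting into the both--odd and both--even cases, using $\gcd(p,q)=1$, to get $(p-1)(q-1)/2$.

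To see that each $X_{a,b}$ is a one--dimensional irreducible set coordinatized by $I_\mu$, I would invoke the Fricke--Vogt theorem: the character of an irreducible $\SL$-representation of a two--generator group is determined by the triple $(\trace \rho(x), \trace \rho(y), \trace \rho(xy))$. On $X_{a,b}$ the first two entries are the fixed constants $2\cos(\pi a/p)$ and $2\cos(\pi b/q)$, so the only remaining coordinate is $\trace \rho(xy)$; the freedom in the relative position of two diagonalizable elements with prescribed eigenvalues produces a one--parameter family of irreducible representations, exhibiting $X_{a,b}$ as a curve. Since the meridian is $\mu = x^{-r} y^s$ with $ps - qr = 1$, the trace function $I_\mu$ is nonconstant on this curve and may be taken as its coordinate. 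For item $(4)$, I would locate the reducible characters by simultaneously diagonalizing $\rho(x)$ and $\rho(y)$: with $\zeta = e^{\sqrt{-1}\,\pi a/p}$ and $\eta = e^{\sqrt{-1}\,\pi b/q}$, the element $\rho(\mu) = \rho(x)^{-r}\rho(y)^{s}$ is then diagonal, and the two possible alignments of the common eigenvectors yield the two eigenvalue--traces $I_\mu = 2\cos(\pi(ra/p \pm sb/q))$. These are precisely the reducible points on $X_{a,b}$, whose complement is the irreducible part of dimension one.

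The step I expect to require the most care is the passage from ``$\trace \rho(xy)$ is the only free trace'' to ``$I_\mu$ is a global coordinate on $X_{a,b}$'': one must verify that $X_{a,b}$ is genuinely irreducible (rather than a union of curves), that the change of variables between $\trace \rho(xy)$ and $I_\mu = \trace \rho(x^{-r}y^s)$ remains nonconstant, and that the two eigenvector alignments produce exactly the two reducible values $2\cos(\pi(ra/p \pm sb/q))$ and no others. Controlling this relative--position parameter and its reducible locus is the technical heart of the argument, whereas the center/Schur's lemma reduction and the counting are comparatively routine.
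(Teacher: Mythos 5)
Your argument is essentially correct and is the standard proof of this result: the paper itself gives no proof, citing Johnson's notes and Kitano--Morifuji, and those sources argue exactly as you do (centrality of $x^p=y^q$ plus Schur's lemma to fix $\trace\rho(x)$, $\trace\rho(y)$ and the sign $(-\I)^a$, then the Fricke coordinates $(\trace\rho(x),\trace\rho(y),\trace\rho(xy))$ to exhibit each $X_{a,b}$ as a curve with exactly two reducible characters coming from the two diagonal alignments). The only points you flag as delicate --- irreducibility of each $X_{a,b}$ and nonconstancy of $I_\mu$ on it --- are handled in the references by noting that the component is the affine line in the coordinate $\trace\rho(xy)$ (the relation $x^p=y^q$ being automatic once the eigenvalues are fixed and distinct) and by computing $I_{x^{-r}y^s}$ as a nonconstant polynomial in that coordinate, so no genuine gap remains.
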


\begin{corollary}
  If $K$ is the $(2, q)$-torus knot, then the irreducible components $X^{\rm irr}(\knotexterior)$
  consists of $(q-1)/2$ components $X_{1, b}$, which has the local coordinate $I_{\mu}$,
  for odd integer $b$ satisfying $0 < b < q$.
\end{corollary}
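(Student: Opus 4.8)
The plan is to obtain this statement by directly specializing Proposition~\ref{prop:chractervar_torusknot} to the case $p = 2$, so that all four enumerated conditions become explicit constraints on the labels $(a, b)$. First I would invoke condition~(1): since each component $X_{a,b}$ requires $0 < a < p = 2$, the only admissible integer value is $a = 1$. This already forces every irreducible component to be of the form $X_{1, b}$.

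Next I would apply the parity condition~(2), namely $a \equiv b \pmod 2$. Because $a = 1$ is odd, this selects precisely the odd integers $b$ lying in the range $0 < b < q$ prescribed by condition~(1). As $q$ is a positive odd integer, these are $b = 1, 3, \ldots, q-2$, giving exactly $(q-1)/2$ values. I would then check that this matches the total count $(p-1)(q-1)/2 = (q-1)/2$ of components asserted by the proposition, confirming that no components are lost or double--counted in the specialization.

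Finally, the local coordinate claim is immediate from condition~(4): each $X_{a, b}$, and in particular each $X_{1, b}$, is parametrized by the trace function $I_\mu$ of the meridian $\mu = x^{-r} y^s$. Since every step is a direct reading of Proposition~\ref{prop:chractervar_torusknot} at $p = 2$, there is no genuine obstacle here; the only point requiring a moment's care is the elementary verification that the odd integers strictly between $0$ and the odd number $q$ number exactly $(q-1)/2$, which is what keeps the component count consistent with the general formula.
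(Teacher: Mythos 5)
Your proposal is correct and is exactly the intended argument: the paper states this as an immediate corollary of Proposition~\ref{prop:chractervar_torusknot} (with no written proof), and specializing to $p=2$ forces $a=1$, the parity condition then selects the odd $b$ with $0<b<q$, of which there are $(q-1)/2$, matching $(p-1)(q-1)/2$, while the local coordinate claim is condition~(4) verbatim.
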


From now on, the symbol $K$ denotes $(2, q)$-torus knot in this subsection.
Every character of irreducible metabelian representation is contained in the subset defined by $I_{\mu} = 0$
from Proposition~\ref{prop:correspondence_abel_metabel}.
\begin{corollary}
  It follows from $|\Delta_K(-1)|=q$ that 
  each component $X_{1, b}$ has only one irreducible metabelian representation as the origin 
  under the local coordinate $I_{\mu}$.
\end{corollary}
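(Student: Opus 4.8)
The plan is to prove the statement by a counting argument that matches the irreducible metabelian characters against the components $X_{1,b}$ one for one. First I would record the two numbers to be compared. On the character variety side, the corollary to Proposition~\ref{prop:chractervar_torusknot} that describes $X^{\mathrm{irr}}(\knotexterior)$ tells us this set is the disjoint union of the components $X_{1,b}$ with $b$ odd and $0<b<q$, so there are exactly $(q-1)/2$ of them, each carrying $I_\mu$ as a parameter. On the metabelian side, Proposition~\ref{prop:correspondence_abel_metabel} together with the remark following it (equivalently Proposition~\ref{prop:fixed_points_involution}) gives that the irreducible metabelian characters number $\tfrac12(|\Delta_K(-1)|-1)$; since $|\Delta_K(-1)|=q$ for the $(2,q)$-torus knot, this is again $(q-1)/2$.

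Next I would locate the metabelian characters inside the components. Every irreducible metabelian representation $\rho$ satisfies $\trace\rho(\mu)=0$, that is $I_\mu=0$, and, being irreducible, its character lies in $X^{\mathrm{irr}}(\knotexterior)=\bigsqcup_b X_{1,b}$. Thus each of the $(q-1)/2$ metabelian characters sits in some component $X_{1,b}$ on the locus $I_\mu=0$, which defines a map from the set of metabelian characters to the set of components.

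The heart of the argument is that a single component $X_{1,b}$ can contain at most one metabelian character: since each $X_{1,b}$ is parametrized by $I_\mu$ (Proposition~\ref{prop:chractervar_torusknot}(4)), the equation $I_\mu=0$ cuts out at most one point of that component, so two distinct metabelian characters in the same $X_{1,b}$ would violate injectivity of $I_\mu$. Hence the map above is injective. Granting this, the two equal counts combine by pigeonhole: an injection between finite sets of the same cardinality $(q-1)/2$ is a bijection, so each component contains exactly one metabelian character. Finally, that unique character has $I_\mu=0$ by construction, hence it is precisely the origin in the local coordinate $I_\mu$, which is the assertion.

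The step I expect to be the main obstacle is pinning down that the metabelian locus $I_\mu=0$ meets each $X_{1,b}$ in a single (reduced) point, i.e.\ that $I_\mu$ parametrizes the component injectively rather than merely locally. I would extract this from the explicit model in Proposition~\ref{prop:chractervar_torusknot}: item~(4) identifies the reducible characters on $X_{1,b}$ as the finitely many values $I_\mu=2\cos(\pi(ra/p\pm sb/q))$ and exhibits the component as a curve globally parametrized by $I_\mu$, so $0$ corresponds to at most one point. I note that the pigeonhole conclusion only requires the ``at most one'' bound, so the full irreducibility of the point $I_\mu=0$ need not be checked directly; it is forced by the matching of the two counts.
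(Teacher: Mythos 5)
Your proposal is correct and follows essentially the same route the paper intends: the corollary is stated without a written proof precisely because it is the counting argument you give, matching the $(q-1)/2 = (|\Delta_K(-1)|-1)/2$ irreducible metabelian characters (all lying on $I_\mu=0$ by $\trace\rho(\mu)=0$) against the $(q-1)/2$ components $X_{1,b}$, each globally parametrized by $I_\mu$ and hence meeting $I_\mu=0$ in at most one point. Your explicit remark that the pigeonhole step forces each such point to actually be an irreducible metabelian character is a nice clarification of what the paper leaves implicit.
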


\begin{proposition}[Theorem~$4.2$ in~\cite{KitanoMorifuji:TwistedAlexTorusKnots}]
  \label{prop:twistedAlex_TorusKnots}
  We suppose that the character of an irreducible metabelian representation $\rho$ of $\knotgroup$ is
  contained in the component $X_{1, b}$.
  Then the twisted Alexander polynomial is expressed as
  $$
  \twistedAlex{\knotexterior}{\rho}
  = (t^2 + 1)
  \prod_{\substack{
      1 \leq \ell \leq (q-1)/2 \\
      \ell \not = (q-b)/2
  }}
  (t^2 + \zeta_q^{\ell})(t^2 + \zeta_q^{-\ell})
  $$
  where $\zeta_q = e^{2 \pi \sqrt{-1}/q}$. 
\end{proposition}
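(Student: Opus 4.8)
The plan is to compute $\twistedAlex{\knotexterior}{\rho}$ directly by Wada's Fox--calculus recipe and then factor the resulting rational function. I would work from the two--generator one--relator presentation $\knotgroup = \langle x, y \mid x^2 = y^q \rangle$, so the single relator is $w = x^2 y^{-q}$. On homology the abelianization sends $x \mapsto t^q$ and $y \mapsto t^2$; indeed, writing the meridian as $\mu = x^{-r} y^s$ with $2s - qr = 1$ gives $\alpha(\mu) = t^{\,2s - qr} = t$. From Proposition~\ref{prop:chractervar_torusknot} specialised to $p = 2$ and $a = 1$, the relevant matrix data are $\trace \rho(x) = 2\cos(\pi/2) = 0$, so that $\rho(x)$ has eigenvalues $\pm \sqrt{-1}$ and $\rho(x)^2 = -\I$, while $\rho(y)$ has eigenvalues $e^{\pm \sqrt{-1}\,\pi b/q}$; since $b$ is odd this forces $\rho(y)^q = -\I$, consistent with $\rho(x^2) = \rho(y^q) = (-\I)^a$.

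Let $\Phi \co \Z[\knotgroup] \to M_2(\C[t^{\pm 1}])$ be the ring homomorphism induced by $\alpha \otimes \rho$. Wada's formula reads $\twistedAlex{\knotexterior}{\rho} = \frac{\det \Phi(\partial w/\partial x)}{\det \Phi(y - 1)}$ up to a factor $\pm t^k$. The numerator is immediate: $\partial(x^2 y^{-q})/\partial x = 1 + x$, whence $\det \Phi(1 + x) = \det(\I + t^q \rho(x)) = (1 + \sqrt{-1}\,t^q)(1 - \sqrt{-1}\,t^q) = 1 + t^{2q}$, using the eigenvalues of $\rho(x)$. The denominator is $\det \Phi(y - 1) = \det(t^2 \rho(y) - \I) = t^4 - 2t^2 \cos(\pi b/q) + 1$, using the eigenvalues of $\rho(y)$. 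Therefore $\twistedAlex{\knotexterior}{\rho} = \dfrac{t^{2q} + 1}{t^4 - 2t^2 \cos(\pi b/q) + 1}$.

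Since this expression involves only the individual conjugacy classes of $\rho(x)$ and $\rho(y)$, which are constant along the component $X_{1, b}$, it is in particular the value at the unique irreducible metabelian character (the point $I_\mu = 0$ singled out by Proposition~\ref{prop:correspondence_abel_metabel}), as required. It then remains to rewrite this fraction as the stated product. The key point is that $b$ odd makes $e^{\pm \sqrt{-1}\,\pi b/q}$ two of the $q$-th roots of $-1$, so the two denominator factors $t^2 - e^{\pm \sqrt{-1}\,\pi b/q}$ both divide $t^{2q} + 1$ and the quotient is an honest polynomial. Factoring $t^{2q} + 1 = \prod_{m}(t^2 - e^{\sqrt{-1}\,\pi m/q})$ over the odd integers $1 \le m \le 2q - 1$, the real root $m = q$ contributes the factor $t^2 + 1$ (this is where $q$ odd is used), and the remaining roots pair into conjugate factors $(t^2 + \zeta_q^{\ell})(t^2 + \zeta_q^{-\ell})$ once one sets $\zeta_q = e^{2\pi \sqrt{-1}/q}$. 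Cancelling the pair corresponding to $m = b$ then produces exactly the product with the index $\ell = (q - b)/2$ removed.

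I expect the main difficulty to be bookkeeping rather than conceptual: translating the factorisation of $t^{2q} + 1$ indexed by $q$-th roots of $-1$ into the stated $\zeta_q$--product and checking that the deleted index is precisely $(q - b)/2$, which is an integer because $q - b$ is even. One must also keep the $\pm t^k$ normalisation under control so that the symmetric representative in the statement is the one obtained. A minor point worth verifying is that $\partial w/\partial x$ (rather than $\partial w/\partial y$) yields a well--defined Wada invariant, i.e.\ that $\det \Phi(y - 1) = t^4 - 2t^2 \cos(\pi b/q) + 1$ is a nonzero rational function, which holds since $\cos(\pi b/q) \neq \pm 1$ for $0 < b < q$.
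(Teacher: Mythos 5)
Your computation is correct and is essentially the argument behind the cited result: the paper offers no proof of this proposition, only the reference to Kitano--Morifuji, and the standard proof is exactly this Wada/Fox-calculus evaluation on the presentation $\langle x, y \mid x^2 = y^q\rangle$ (numerator $\det(\I + t^q\rho(x)) = 1 + t^{2q}$, denominator $\det(t^2\rho(y) - \I) = t^4 - 2t^2\cos(\pi b/q) + 1$) followed by factoring $t^{2q}+1$ over the $q$-th roots of $-1$. The bookkeeping you flag does work out: $t^2 - e^{\sqrt{-1}\,\pi m/q} = t^2 + \zeta_q^{(m-q)/2}$ for odd $m$, so the cancelled pair $m \in \{b,\, 2q-b\}$ is precisely the index $\ell = (q-b)/2$ in the stated product.
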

We can compute directly
the rational function $\twistedAlex[\sqrt{-1} t]{\knotexterior}{\rho}/ (t^2 -1)$ in Theorem~\ref{thm:main_theorem}
and its evaluation at $t=1$.
\begin{lemma}
  We suppose that the character an irreducible metabelian representation $\rho$ 
  of $\knotgroup$ is contained in $X_{1, b}$.
  The square of $P(1) = \lim_{t \to 1} \twistedAlex[\sqrt{-1} t]{\knotexterior}{\rho}/ (t^2 -1)$
  in Theorem~\ref{thm:main_theorem} is expressed as
  $$
  P(1)^2
  =
  \left(
  \frac{q}{4 \sin^2 (j \pi/q)}
  \right)^2
  $$
  where $j = (q-b)/2$.
\end{lemma}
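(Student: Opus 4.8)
The plan is to feed the explicit product formula for $\twistedAlex{\knotexterior}{\rho}$ from Proposition~\ref{prop:twistedAlex_TorusKnots} directly into the definition of $P(1)$, which requires the substitution $t \mapsto \sqrt{-1}\,t$. Since $(\sqrt{-1}\,t)^2 = -t^2$, each factor of the shape $t^2 + c$ turns into $c - t^2$; in particular the leading $t^2+1$ becomes $1-t^2$ and every cyclotomic factor $t^2 + \zeta_q^{\pm\ell}$ becomes $\zeta_q^{\pm\ell} - t^2$. Writing $j = (q-b)/2$ for the omitted index, this gives
$$
\twistedAlex[\sqrt{-1}\,t]{\knotexterior}{\rho}
= (1-t^2)\prod_{\substack{1 \leq \ell \leq (q-1)/2 \\ \ell \neq j}}(\zeta_q^{\ell} - t^2)(\zeta_q^{-\ell} - t^2).
$$

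First I would cancel the apparent singularity at $t=1$: since $t^2-1 = -(1-t^2)$, the factor $1-t^2$ divides out exactly, so the ratio is a genuine polynomial in $t^2$ and the limit defining $P(1)$ is merely its value at $t=1$, namely
$$
P(1) = -\prod_{\substack{1 \leq \ell \leq (q-1)/2 \\ \ell \neq j}}(\zeta_q^{\ell} - 1)(\zeta_q^{-\ell} - 1).
$$
Next I would rewrite each paired factor trigonometrically via $(\zeta_q^{\ell}-1)(\zeta_q^{-\ell}-1) = 2 - 2\cos(2\pi\ell/q) = 4\sin^2(\pi\ell/q)$, which displays $P(1)$ as a real number equal to $-\prod_{\ell \neq j} 4\sin^2(\pi\ell/q)$.

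The one genuinely arithmetic ingredient is the classical identity $\prod_{\ell=1}^{(q-1)/2} 4\sin^2(\pi\ell/q) = q$, which I would derive by evaluating the cyclotomic factorization $\prod_{\ell=1}^{q-1}(x - \zeta_q^{\ell}) = 1 + x + \cdots + x^{q-1}$ at $x=1$ to obtain $\prod_{\ell=1}^{q-1}(1-\zeta_q^{\ell}) = q$, then pairing $\ell$ with $q-\ell$ (legitimate because $q$ is odd, so no index is self-paired) to recognize the left-hand side as the sine product. Dividing this full product by the single omitted factor at $\ell = j$ yields $P(1) = -q/(4\sin^2(\pi j/q))$, and squaring gives the asserted identity; the overall sign is irrelevant since only $P(1)^2$ is claimed. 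I do not anticipate a real obstacle here, as the substitution and cancellation are mechanical and the product identity is standard, so the only point demanding care is correctly carrying the excluded index $\ell = j$ through to the final quotient.
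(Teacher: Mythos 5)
Your proposal is correct and follows essentially the same route as the paper: substitute $t\mapsto\sqrt{-1}\,t$ in the product formula of Proposition~\ref{prop:twistedAlex_TorusKnots}, cancel the resulting factor $1-t^2$ against $t^2-1$, and evaluate the remaining product via the cyclotomic identity $\prod_{\ell=1}^{(q-1)/2}(1-\zeta_q^{\ell})(1-\zeta_q^{-\ell})=q$, dividing out the omitted factor at $\ell=j$ to get $P(1)=-q/\bigl(4\sin^2(j\pi/q)\bigr)$. Your explicit justification of the cancellation and of the sine-product identity only fills in steps the paper leaves implicit (via Lemma~\ref{lemma:polynomial_P}), so there is nothing to add.
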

\begin{proof}
  By Proposition~\ref{prop:twistedAlex_TorusKnots} and Lemma~\ref{lemma:polynomial_P},
  we can express as $P(1)$ as
  $$
  P(1) =
  - \prod_{
    \substack{
      1 \leq \ell \leq (q-1)/2 \\
      \ell \not = j
  }}
  (1 - \zeta_q^\ell)(1 - \zeta_q^{-\ell}).
  $$
  From the relation that $\prod_{\ell=1}^{(q-1)/2 } (t-\zeta_q^\ell)(t-\zeta_q^{-\ell}) = 1+t+ \cdots + t^{q-1}$,
  we can rewrite the above equality as
  $$
  P(1)
  = \frac{-q}{(1-\zeta_q^j)(1-\zeta_q^{-j})}
  = \frac{-q}{4 \sin (j \pi/q)}.
  $$
\end{proof}

We proceed to compute the rational function
$\frac{I_\lambda^2 -4}{I_{\widehat{\mu}}^2 -4}
\left(\frac{d I_{\widehat{\mu}}}{dI_\lambda}\right)^2$.
We will see that this function is constant whole $X^{\rm irr}(\knotexterior)$.

\begin{lemma}
  For $(2, q)$-torus knot $K$, we have 
  $$
  \frac{I_\lambda^2 -4}{I_{\widehat{\mu}}^2 -4}
  \left(\frac{d I_{\widehat{\mu}}}{dI_\lambda}\right)^2
  = \frac{1}{q^2}
  $$
  on the whole set $X^{\rm irr}(\knotexterior)$.
\end{lemma}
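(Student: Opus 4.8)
The plan is to reduce everything to the single trace coordinate $I_\mu$ on each component $X_{1,b}$ and then carry out an elementary trigonometric computation whose factors cancel. First I would use the presentation $\knotgroup = \langle x, y \,|\, x^2 = y^q \rangle$ together with the central element $z = x^2 = y^q$, and invoke the standard description of the preferred longitude of a torus knot as
$$
\lambda = z\,\mu^{-2q},
$$
which is the $0$-framed longitude because $[z] = 2[x] = 2q$ and $[\mu]=1$ give $[\lambda]=2q-2q=0$ in $H_1(\knotexterior;\Z)$. Since $z$ is central and every $\rho$ in question is irreducible, Schur's lemma forces $\rho(z)=\pm\I$; indeed Proposition~\ref{prop:chractervar_torusknot}(3) gives $\rho(x^2)=-\I$ on each $X_{1,b}$. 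Hence $\rho(\lambda)=\pm\rho(\mu)^{-2q}$, and all three factors in the statement will be expressible through $\rho(\mu)$ alone.

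Next I would introduce the substitution $I_\mu = 2\cos\theta$, which is a local analytic parameter on each one-dimensional component away from the two branch points $I_\mu = \pm 2$; here $\theta$ is allowed to be complex and the identities below are identities of rational functions obtained by analytic continuation from the dense locus where $\rho(\mu)$ is diagonalizable. Writing the eigenvalues of $\rho(\mu)$ as $e^{\pm\sqrt{-1}\,\theta}$ yields $\trace\rho(\mu)^n = 2\cos(n\theta)$ for every $n$. In particular the Cayley--Hamilton relation gives
$$
I_{\widehat{\mu}} = \trace\rho(\mu^2) = I_\mu^2 - 2 = 2\cos(2\theta),
$$
while the description of $\lambda$ above gives
$$
I_\lambda = \pm\,\trace\rho(\mu)^{2q} = \pm\,2\cos(2q\theta).
$$

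Then I would assemble the three factors, computing the derivative through the common parameter $\theta$ by the chain rule. One finds $I_{\widehat{\mu}}^2-4 = -4\sin^2(2\theta)$ and $I_\lambda^2-4 = -4\sin^2(2q\theta)$, so that
$$
\frac{I_\lambda^2-4}{I_{\widehat{\mu}}^2-4} = \frac{\sin^2(2q\theta)}{\sin^2(2\theta)},
$$
and from $\tfrac{d}{d\theta}I_{\widehat{\mu}} = -4\sin(2\theta)$ and $\tfrac{d}{d\theta}I_\lambda = \mp 4q\sin(2q\theta)$ one gets
$$
\left(\frac{d I_{\widehat{\mu}}}{d I_\lambda}\right)^2 = \frac{\sin^2(2\theta)}{q^2\,\sin^2(2q\theta)}.
$$
Multiplying the two displays makes the trigonometric factors cancel, leaving $1/q^2$, independent of $\theta$ and of the chosen component $X_{1,b}$; being a constant rational function, it extends across the branch points to the whole of $X^{\rm irr}(\knotexterior)$.

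The computation is routine; the points requiring care are the identification $\lambda = z\,\mu^{-2q}$ of the preferred longitude (and the resulting scalar value of $\rho(z)$, which is what makes $I_\lambda$ a function of $I_\mu$ alone), and the justification that $\tfrac{d I_{\widehat{\mu}}}{d I_\lambda}$ --- a ratio of $1$-forms on the one-dimensional character variety --- is correctly evaluated through the auxiliary parameter $\theta$ as a genuine rational function, so that the constant value computed on the open dense locus where $I_\mu$ is a local parameter is valid everywhere. The main, and only mild, obstacle is thus the bookkeeping of the longitude relation and of the analytic parametrization, rather than any substantive difficulty.
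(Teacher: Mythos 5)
Your proposal is correct and follows essentially the same route as the paper: both arguments rest on the longitude relation $\lambda = \mu^{2q}x^{-2}$ (your $z\mu^{-2q}$ is its inverse, which has the same trace) together with $\rho(x^2)=-\I$ on each component, which makes $I_\lambda$ a function of $I_{\widehat{\mu}}$ alone, and then a direct derivative computation. The only cosmetic difference is that the paper parametrizes by the eigenvalue $\widehat{M}$ of $\rho(\widehat{\mu})$ and writes $I_\lambda=-(\widehat{M}^q+\widehat{M}^{-q})$, whereas you use the angle substitution $I_\mu=2\cos\theta$; the resulting Chebyshev-type identity $\bigl(dI_\lambda/dI_{\widehat{\mu}}\bigr)^2=q^2(I_\lambda^2-4)/(I_{\widehat{\mu}}^2-4)$ is the same.
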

\begin{proof}
  It is known that the preferred longitude $\lambda$ is represented by
  $\mu^{2q}x^{-2}$ in the torus knot group $\knotgroup$.
  We can deduce the relation between
  the regular functions $I_\lambda$ and $I_{\widehat{\mu}}$
  on the character variety.
  Let $\widehat{M}^{\pm 1}$ be the eigenvalues of matrix
  corresponding to $\widehat{\mu}$ under an irreducible representation.
  By Proposition~\ref{prop:chractervar_torusknot}, the element $x^2$ is
  sent to $-\I$ by every irreducible representation.
  Hence the function $I_\lambda$ is given by $-(\widehat{M}^q + \widehat{M}^{-q})$.
  Since $\widehat{M}^{\pm 1}$ is given by $\frac{1}{2}\Big(I_{\widehat{\mu}} \pm \sqrt{I_{\widehat{\mu}}^2 -4} \Big)$,
  we can express the function $I_{\lambda}$ as
  $$
  I_{\lambda} =
  -\frac{1}{2^q}\left\{
  \left(
  I_{\widehat{\mu}} + \sqrt{I_{\widehat{\mu}}^2 -4}
  \right)^q
  + 
  \left(
  I_{\widehat{\mu}} - \sqrt{I_{\widehat{\mu}}^2 -4}
  \right)^q
  \right\}.
  $$
  The derivative of $I_{\lambda}$ by $I_{\widehat{\mu}}$ is given by
  \begin{align*}
    \frac{d I_\lambda}{d I_{\widehat{\mu}}}
    &=
    \frac{-q}{\sqrt{I_{\widehat{\mu}}^2 -4}}
    \cdot
    \frac{1}{2^q}
    \left\{
    \left(I_{\widehat{\mu}} + \sqrt{I_{\widehat{\mu}}^2 -4}\right)^q
    - \left(I_{\widehat{\mu}} - \sqrt{I_{\widehat{\mu}}^2 -4}\right)^q
    \right\} \\
    &=
    \frac{-q}{\sqrt{I_{\widehat{\mu}}^2 -4}}
    (\widehat{M}^q - \widehat{M}^{-q}).
  \end{align*}
  Taking square of both sides, we have the equality:
  \begin{align*}
    \left(
    \frac{d I_\lambda}{d I_{\widehat{\mu}}}
    \right)^2
    &=
    \frac{q^2}{I_{\widehat{\mu}}^2 -4} \cdot ((\widehat{M}^q + \widehat{M}^{-q})^2 -4)\\
    &=
    \frac{q^2}{I_{\widehat{\mu}}^2 -4} \cdot (I_\lambda^2 -4),
  \end{align*}
  which gives the desired relation.
\end{proof}
We recall the Reidemeister torsion for $\Sigma_2$. It is known that
the double branched cover along $(2, q)$-torus knot is the lens space of type $(q, 1)$.
We denote by $\gamma$ a generator of $\piDB=\pi_1(L(q, 1))$, \ie
$$
\piDB = \langle \gamma \,|\, \gamma^q = 1 \rangle.
$$
\begin{proposition}
  \label{prop:Rtorsion_LensSpace}
  Let $K$ be a $(2, q)$-torus knot and
  $\xi$ a non--trivial homomorphism from $\piDB$ to $\GL_1(\C)$ sending $\gamma$ to $e^{2\pi\sqrt{-1}/q}$.
  We denote by $\xi^j$ the homomorphism of $\piDB$ sending $\gamma$ to $e^{2\pi\sqrt{-1}j/q}$.
  Then The Reidemeister torsion for $\Sigma_2$ and $\xi^j \oplus \xi^{-j}$ is expressed as
  $$
  \Tor{\Sigma_2}{\C_{\xi^j} \oplus \C_{\xi^{-j}}}
  = \frac{1}{\left( 4 \sin^2 (j \pi / q)\right)^2}.
  $$
\end{proposition}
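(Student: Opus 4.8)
The plan is to read the result directly off Theorem~\ref{thm:main_theorem} once the two explicit computations of this subsection are in place. \textbf{Reduction to the main theorem.} First I would observe that the torsion attached to the two--dimensional representation $\xi^j \oplus \xi^{-j}$ factors into one--dimensional torsions. By the direct--sum decomposition of Lemma~\ref{lemma:decomp_twistedchain_DB} together with multiplicativity of torsion over direct summands,
$$
\Tor{\Sigma_2}{\C_{\xi^j} \oplus \C_{\xi^{-j}}}
= \Tor{\Sigma_2}{\C_{\xi^j}} \cdot \Tor{\Sigma_2}{\C_{\xi^{-j}}},
$$
and since $H_1(\Sigma_2;\Z)$ is finite the character $\xi^{-j}$ equals the complex conjugate $\overline{\xi^{j}}$, so the right--hand side is $|\Tor{\Sigma_2}{\C_{\xi^{j}}}|^2$. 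This is precisely the quantity evaluated by Theorem~\ref{thm:main_theorem}.

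\textbf{Verifying the hypotheses.} Before applying the main theorem I would confirm its two standing assumptions for the irreducible metabelian representation $\rho'$ associated with $\xi^{j}$. The complex $C_*(\Sigma_2;\C_{\xi^j})$ is acyclic: $\Sigma_2 = L(q,1)$ is a rational homology sphere, its universal cover is $S^3$, and $H_*(\Sigma_2;\C_{\xi^j})$ is the $\xi^{j}$--isotypic part of $H_*(S^3;\C)$, which vanishes for a non--trivial character of the finite cyclic group $\piDB$. The $\lambda$--regularity demanded by Lemma~\ref{lemma:determinant_MV} I would check directly for the metabelian representations of torus knot groups; in any case the computation of the rational function in the preceding lemma was carried out as an identity of regular functions on all of $X^{\rm irr}(\knotexterior)$.

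\textbf{Assembling the value.} With the hypotheses in place, Theorem~\ref{thm:main_theorem} gives
$$
|\Tor{\Sigma_2}{\C_{\xi^{j}}}|^2
= \pm\, P(1)^2 \left.\frac{I_\lambda^2 -4}{I_{\widehat{\mu}}^2 -4}\left(\frac{d I_{\widehat{\mu}}}{dI_\lambda}\right)^2\right|_{\chi=\chi_{\rho'}}.
$$
Then I would substitute the two values computed above, namely $P(1)^2 = \big(q/(4\sin^2(j\pi/q))\big)^2$ and the value $1/q^2$ of the rational function. The factors of $q^2$ cancel, leaving $1/(4\sin^2(j\pi/q))^2$; since both inputs are positive reals the sign ambiguity resolves to $+$, which yields the stated formula.

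\textbf{The one point requiring care.} The only genuine bookkeeping is matching indices. Using the correspondence of Proposition~\ref{prop:correspondence_abel_metabel}, I must confirm that the metabelian representation associated with $\xi^{j}$ has its character in the component $X_{1,b}$ with $b = q - 2j$, so that the integer $j = (q-b)/2$ appearing in the computation of $P(1)^2$ coincides with the exponent of $\xi^{j}$. As $j$ runs through $1,\dots,(q-1)/2$ this sets up a bijection with the odd integers $b$ in $(0,q)$, so every non--trivial $\xi^{j}$, taken up to the symmetry $\xi^{j}\leftrightarrow\xi^{-j}$, is accounted for. As a sanity check, the resulting value $1/(4\sin^2(j\pi/q))^2$ agrees with the classical Franz--Reidemeister torsion of the lens space $L(q,1)$.
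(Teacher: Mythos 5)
Your argument arrives at the right number, but it runs in exactly the opposite direction from the paper. The paper proves Proposition~\ref{prop:Rtorsion_LensSpace} \emph{independently} of Theorem~\ref{thm:main_theorem}: since $\Sigma_2 = L(q,1)$, it invokes the classical Franz--Reidemeister computation $\Tor{\Sigma_2}{\C_{\xi^j}} = (\zeta_q^j)^\ell/(1-\zeta_q^j)^2$ with $\zeta_q = e^{2\pi\sqrt{-1}/q}$, splits the torsion of the direct sum as $\Tor{\Sigma_2}{\C_{\xi^j}}\cdot\Tor{\Sigma_2}{\C_{\xi^{-j}}}$, notes $\xi^{-j}=\overline{\xi^j}$, and multiplies to get $1/\bigl((1-\zeta_q^j)^2(1-\zeta_q^{-j})^2\bigr) = 1/(4\sin^2(j\pi/q))^2$. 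The whole point of stating the proposition this way is that it then serves as a consistency check of the main theorem in the torus-knot case --- the paragraph immediately following the proposition begins ``We can check our formula in Theorem~\ref{thm:main_theorem}'' and verifies that $P(1)^2\cdot(1/q^2)$ reproduces the lens-space value. By deriving the proposition \emph{from} the main theorem you make that subsequent verification circular and lose the independent confirmation the section is designed to provide. Your route is also not free: it requires you to actually establish $\lambda$-regularity of the metabelian $\rho'$ for $(2,q)$-torus knots, which the paper only proves for hyperbolic two-bridge knots (via Property L in Subsection~\ref{subsec:hyperbolic_twobridge}); your proposal defers this with ``I would check directly,'' which is precisely the nontrivial hypothesis you cannot wave away if the main theorem is to be applied. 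Your index bookkeeping ($j=(q-b)/2$, matching $\xi^j$ to the component $X_{1,b}$) is correct and agrees with the paper's remark following the proposition. In short: the computation is fine as arithmetic, but you should prove the proposition by the direct lens-space calculation and reserve the main theorem for the comparison, not the derivation.
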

\begin{proof}
  The Reidemeister torsion $\Tor{\Sigma_2}{\C_{\xi^j} \oplus \C_{\xi^{-j}}}$
  for the direct sum representation turns into the product:
  $$
  \Tor{\Sigma_2}{\C_{\xi^j}} \cdot \Tor{\Sigma_2}{\C_{\xi^{-j}}}.
  $$
  We can see that $\xi^{-j}$ is the same as $\overline{\xi^j}$ since
  the image by $\xi$ is contained in the unit circle of $\C$.
  Hence the Reidemeister torsion for $\xi^{-j}$ is the complex conjugate of
  $\Tor{\Sigma_2}{\C_{\xi^j}}$.
  It is known that the Reidemeister torsion $\Tor{\Sigma_2}{\C_{\xi^j}}$ is given by
  $$
  \frac{
    (\zeta_q^j)^\ell
  }{
    (1-\zeta_q^j)^2
  }
  $$
  where $\zeta_q = e^{2\pi\sqrt{-1}/q}$ and $\ell \in \Z$.
  Therefore $\Tor{\Sigma_2}{\C_{\xi^j} \oplus \C_{\xi^{-j}}}$ turns out to be
  $$
  \frac{
    1
  }{
    (1-\zeta^j_q)^2(1-\zeta^{-j}_q)^2
  }
  $$
  which completes the proof.
\end{proof}

\begin{remark}
  For precise presentations of $\piDB$ and $\xi$,
  we refer to Lemma~\ref{lemma:metabelian_twobridge_parametrization}.
  The conjugacy class $[\rho]$ of metabelian representations in $X_{1, b}$ corresponds to
  $[\rho_k]$ for $k = (q-b)/2$ in Lemma~\ref{lemma:metabelian_twobridge_parametrization}.
\end{remark}

We can check our formula in Theorem~\ref{thm:main_theorem}.
We choose an irreducible metabelian representation $\rho$
whose character is contained in the component $X_{1, b}$,
where $1 < b < q$ and $b$ is odd.
The right hand side of~\eqref{eqn:Main_Formula} turns into the product
$$
P(1)^2 \cdot
\frac{
  I_\lambda^2 -4
}{
  I_{\widehat{\mu}}^2 -4
}
\left(
\frac{
  d I_{\widehat{\mu}}
}{
  dI_\lambda}
\right)^2
=
\left(
\frac{
  q
}{
  4 \sin^2 (j \pi / q )
}
\right)^2
\frac{1}{q^2}
=
\left(
\frac{
  1
}{
  4 \sin^2 (j \pi / q )
}
\right)^2
$$
where $j = (q-b)/2$.
This value agrees with the Reidemeister torsion $\Tor{\Sigma_2}{\C_{\xi^j} \oplus \C_{\xi^{-j}}}$
in Proposition~\ref{prop:Rtorsion_LensSpace}.

\subsection{Hyperbolic two bridge knots}
\label{subsec:hyperbolic_twobridge}
We will show that every irreducible metabelian representation is 
$\lambda$-regular for hyperbolic tow--bridge knot groups.
This guarantees that the torsion $\Tor{\coverExterior}{\sll}$ is non--zero and
$\TorMV{\hbox{\ref{eqn:Mayer_Vietoris}}}$ does not diverge.
We will derive the $\lambda$-regularity from the Property L of hyperbolic two--bridge knots.

We recall the definition of Property L of knots, which follows the paper~\cite{BoileauBoyerReidWang:Simon}
by M.~Boileau, S.~Boyer, A.~Reid and S.~Wang.
\begin{definition}
  Let $K$ be  a knot in $S^3$. We denote by $K(0)$ the manifold obtained from by a longitudinal surgery
  on $K$. We say that $K$ has Property L if the $\SL$-character variety of the manifold $K(0)$ contains
  only finitely many characters of irreducible representations.
\end{definition}

\begin{proposition}[Proposition~$1.10$ in \cite{BoileauBoyerReidWang:Simon}]
  \label{prop:Property_L}
  If $K$ be a hyperbolic two--bridge knot, then $K$ has Property L.
\end{proposition}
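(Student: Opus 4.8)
The plan is to translate Property L into a statement about the knot exterior. By van Kampen applied to the decomposition of $K(0)$ into $\knotexterior$ and the surgery solid torus glued along $\lambda$, we have $\pi_1(K(0)) = \knotgroup / \nclos{\lambda}$, so the irreducible characters of $K(0)$ are exactly the closed subset $\{[\rho] \in X^{\rm irr}(\knotexterior) \mid \rho(\lambda) = \I\}$. Since $\knotexterior$ is a two--bridge knot exterior, $X^{\rm irr}(\knotexterior)$ is an affine curve, so its intersection with any proper closed subset is automatically finite; thus it suffices to prove that no one--dimensional component $C$ of $X^{\rm irr}(\knotexterior)$ is entirely contained in the locus $\rho(\lambda) = \I$. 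Equivalently, writing $L$ for the eigenvalue of $\rho(\lambda)$, I would show that $L \not\equiv 1$ on every component.

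It is essential to keep the torus knots in view, since for a $(2,q)$--torus knot $K(0)$ is Seifert fibered and carries a positive--dimensional family of irreducible characters, so Property L genuinely fails there; the argument must therefore use hyperbolicity to exclude precisely this behaviour. Assume for contradiction that a component $C$ with $\rho(\lambda) = \I$ exists. Being an affine curve, $C$ possesses an ideal point $x$ in a smooth projective completion. Because two--bridge knot exteriors are small (they contain no closed essential surface, by Hatcher--Thurston~\cite{HatcherThurston85}), the Culler--Shalen machinery attaches to $x$ a nontrivial action of $\knotgroup$ on a tree together with an essential surface in $\knotexterior$ with nonempty boundary, whose boundary slope is the unique peripheral slope whose trace function remains finite at $x$. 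On $C$ one has $I_\lambda \equiv 2$, so the longitude is that slope: the ideal point $x$ detects the longitudinal slope $0$.

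The heart of the matter, and the step I expect to be the main obstacle, is then to rule out the longitudinal slope being detected by an ideal point for a hyperbolic two--bridge knot. Here I would invoke Hatcher--Thurston's complete classification of the incompressible, boundary--incompressible surfaces in a two--bridge exterior and of their boundary slopes, which are read off from the continued--fraction expansion of the defining fraction. The claim to establish is that, for a non--torus (hence hyperbolic) two--bridge knot, slope $0$ is never the slope carried by a surface detected at an ideal point of $X^{\rm irr}(\knotexterior)$; phrased through the $A$--polynomial, that $L-1$ does not divide the factor of $A_K(M,L)$ corresponding to irreducible characters. The minimal--genus Seifert surface does realize slope $0$, but it fails to be detected in the Culler--Shalen sense exactly when $K$ is hyperbolic, whereas in the torus--knot case the Seifert fibration supplies the detecting ideal point and the continuous family of characters. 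Both the two--bridge and the hyperbolicity hypotheses are genuinely consumed at this point, and it is the only nontrivial input; granting it, the assumed component $C$ cannot exist, hence $\{[\rho] \mid \rho(\lambda) = \I\}$ is finite and $K$ has Property L.
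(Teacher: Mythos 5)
The paper offers no proof of this statement at all: it is imported verbatim as Proposition~1.10 of the cited work of Boileau, Boyer, Reid and Wang, so there is no internal argument to compare yours against. Your proposal therefore has to stand on its own, and as written it does not close.

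The reduction in your first two paragraphs is sound: irreducible characters of $K(0)$ embed in $X^{\rm irr}(\knotexterior)$ as the closed locus $\{\rho(\lambda)=\I\}$, the components of $X^{\rm irr}(\knotexterior)$ for a two--bridge knot are affine curves, and the Culler--Shalen step (using that two--bridge exteriors are small, so some peripheral trace function must blow up at an ideal point of a component contained in that locus) correctly shows that such a component would force an ideal point detecting the longitudinal slope $0$. The genuine gap is the step you yourself flag as ``the heart of the matter'': you never prove that the slope $0$ is undetected at ideal points for a \emph{hyperbolic} two--bridge knot. Appealing to Hatcher--Thurston does not supply this, because their classification lists the boundary slopes of incompressible, boundary--incompressible surfaces --- a list on which $0$ always appears, via the minimal--genus Seifert surface --- and says nothing about which of those slopes are detected by ideal points of the character variety. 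Your sentence asserting that the Seifert surface ``fails to be detected in the Culler--Shalen sense exactly when $K$ is hyperbolic'' is essentially a restatement of the proposition rather than an argument for it; it is precisely the content the cited proof has to supply, and no mechanism (a norm argument, an $A$--polynomial computation, a use of the cyclic or finite surgery theorems, or the structure of $K(0)$ as a surface bundle) is offered to establish it. Until that single claim is proved, the argument is circular at its only nontrivial point.
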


Property L means that the character variety $X(K(0))$ consists of only points. 
This property gives us the $\lambda$-regularity of an irreducible metabelian representation.

\begin{proposition}
  \label{prop:ciriterion_regularity}
  Suppose that $\rho$ is an irreducible metabelian $\SL$-representation of $\knotgroup$.
  Then $\rho$ is $\lambda$-regular if and only if
  $\dim_{\C} X(K(0)) = 0$ at the character of $\rho$ and its character is a smooth point of $X(K(0))$.
\end{proposition}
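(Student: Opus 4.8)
The plan is to express both conditions in terms of the single cohomological requirement $H^1(K(0);\sll)=0$ on the $0$--surgered manifold, and to compare the two sides through a Mayer--Vietoris computation for the decomposition $K(0)=\knotexterior\cup_{\boundaryTorus}V$, where $V$ is the filling solid torus whose meridian is glued to $\lambda$.

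First I would fix the dictionary on both sides. Since $\rho$ is metabelian the preferred longitude $\lambda$ lies in the second commutator subgroup, so $\rho(\lambda)=\I$ (Remark~\ref{remark:rest_adjoint_rep}); hence $\rho$ factors through $\pi_1(K(0))=\knotgroup/\nclos{\lambda}$ with the same (irreducible) image, and $\chi_\rho$ defines a point of $X(K(0))$. For an irreducible character the Zariski tangent space $T^{Zar}_{\chi_\rho}X(K(0))$ is identified with $H^1(\pi_1(K(0));\sll)$ by Weil's deformation theory (compare~\cite[Section~3]{Porti:1997}), so the requirement that $\chi_\rho$ be a smooth point of $X(K(0))$ with $\dim_\C X(K(0))=0$ there is equivalent to the vanishing of this tangent space, i.e. $H^1(K(0);\sll)=0$; by the universal coefficient theorem and the self--duality of $\sll$ under the Killing form this is the same as $H_1(K(0);\sll)=0$. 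On the left hand side, condition~\eqref{item:parabolic_regularity} of Definition~\ref{def:regularity_rep} is automatic: as $\trace\rho(\mu)=0\neq\pm2$ (Proposition~\ref{prop:correspondence_abel_metabel}), $\rho$ is not boundary--parabolic and the hypothesis of~\eqref{item:parabolic_regularity} is never satisfied. Thus $\lambda$--regularity is equivalent to the conjunction of conditions~\eqref{item:dim_regularity} and~\eqref{item:generator_regularity}.

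Next I would do the boundary bookkeeping. Because $\rho(\lambda)=\I$ and $\rho(\mu)$ has distinct eigenvalues $\pm\sqrt{-1}$, the $Ad \circ \rho$--invariant line of $\sll$ along $\boundaryTorus$ is exactly $\langle Q^\rho\rangle$ (Definition~\ref{def:egenvectors}), and a K\"unneth computation gives $H_*(\boundaryTorus;\sll)\cong\C,\ \C^2,\ \C$ in degrees $0,1,2$, with $H_1$ based by $\{Q^\rho\otimes\mu,\ Q^\rho\otimes\lambda\}$. On the filling torus $\pi_1(V)=\langle\mu\rangle$, so $H_1(V;\sll)=\langle Q^\rho\otimes(\mathrm{core})\rangle$ and $H_0(V;\sll)\cong\C$. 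Feeding these into the Mayer--Vietoris sequence, and using $H_0(\knotexterior;\sll)=0$ (irreducibility) to see that the connecting map leaving $H_1(K(0);\sll)$ vanishes, I obtain
$$
H_1(K(0);\sll)\cong\coker\big(\phi\co H_1(\boundaryTorus;\sll)\to H_1(\knotexterior;\sll)\oplus H_1(V;\sll)\big),\quad \phi=(i_*,-j_*),
$$
where $j_*$ carries $Q^\rho\otimes\mu$ to a generator of $H_1(V;\sll)$ and kills $Q^\rho\otimes\lambda$.

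Finally I would read the equivalence off the map $\phi$. The ``half lives, half dies'' duality for the torus boundary forces $\dim_\C\im\big(i_*\co H_1(\boundaryTorus;\sll)\to H_1(\knotexterior;\sll)\big)=1$; in particular $\dim_\C H_1(\knotexterior;\sll)\geq1$, which discards the degenerate possibility that would otherwise break the equivalence. Since the source of $\phi$ is two--dimensional and its $V$--component is already surjective, $\coker\phi=0$ can occur only when $\dim_\C H_1(\knotexterior;\sll)=1$ (condition~\eqref{item:dim_regularity}), and in that case $\phi$ is an isomorphism precisely when $i_*(Q^\rho\otimes\lambda)\neq0$; as the image of $H_1(\lambda;\sll)\to H_1(\boundaryTorus;\sll)$ is $\langle Q^\rho\otimes\lambda\rangle$, this nonvanishing is exactly condition~\eqref{item:generator_regularity}. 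Hence $H_1(K(0);\sll)=0$ if and only if conditions~\eqref{item:dim_regularity} and~\eqref{item:generator_regularity} hold, that is, if and only if $\rho$ is $\lambda$--regular. I expect the main obstacle to be precisely this last step: keeping the ranks exact, matching $i_*(Q^\rho\otimes\lambda)\neq0$ with condition~\eqref{item:generator_regularity}, and using the boundary duality to eliminate the case $\dim_\C H_1(\knotexterior;\sll)=0$ --- together with the care required to identify the Zariski tangent space of the \emph{closed} manifold $K(0)$ with $H^1(K(0);\sll)$ for the metabelian character $\chi_\rho$.
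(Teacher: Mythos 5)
Your proof is correct and follows essentially the same route as the paper: both reduce the smoothness/dimension hypothesis to the vanishing of $H_1(K(0);\sll)$ (via the identification of the Zariski tangent space at the irreducible character $\chi_{\bar\rho}$ with $H^1(K(0);\sll)$) and then compare this with $\lambda$-regularity through the Mayer--Vietoris sequence of the decomposition $K(0)=\knotexterior\cup_{\boundaryTorus}V$, tracking the invariant eigenvector of $Ad\circ\rho(\mu)$. The only organizational difference is that you run both implications at once through a cokernel computation, using ``half lives, half dies'' to exclude $H_1(\knotexterior;\sll)=0$, whereas the paper treats the two directions separately and obtains the injectivity of the Mayer--Vietoris map from $H_2(K(0);\sll)=0$ via Poincar\'e duality.
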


Note that every metabelian $\SL$-representation sends the longitude $\lambda$ to $\I$.
This means that all metabelian representations induce homomorphism
from the fundamental group $\pi_1(K(0)) = \knotgroup / \nclos{\lambda}$ into $\SL$.
We use the symbol $\bar{\rho}$ for the induced representation of $\pi_1(K(0))$ by $\rho$.

\begin{corollary}
  If $K$ is a hyperbolic two--bridge knot,
  then all irreducible metabelian $\SL$-representations of $\knotgroup$ is $\lambda$-regular.
\end{corollary}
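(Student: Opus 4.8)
The plan is to derive the $\lambda$--regularity from Proposition~\ref{prop:Property_L} (Property L) by feeding its conclusion into the criterion of Proposition~\ref{prop:ciriterion_regularity}. The key observation, already recorded in the paragraph preceding the statement, is that any irreducible metabelian $\SL$--representation $\rho$ of $\knotgroup$ satisfies $\rho(\lambda) = \I$. Since a homomorphism that kills $\lambda$ also kills every conjugate $g \lambda g^{-1}$, the representation $\rho$ is trivial on the whole normal closure $\nclos{\lambda}$ and therefore descends to the representation $\bar\rho$ of $\pi_1(K(0)) = \knotgroup / \nclos{\lambda}$. As $\bar\rho$ has exactly the same image in $\SL$ as $\rho$, the invariant--line condition is identical for the two, so $\bar\rho$ is again irreducible and its character $\chi_{\bar\rho}$ is an irreducible character lying in $X(K(0))$.

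Next I would bring in the hyperbolicity hypothesis. By Proposition~\ref{prop:Property_L} a hyperbolic two--bridge knot has Property L, which by definition means that $X(K(0))$ contains only finitely many characters of irreducible representations. Hence the character $\chi_{\bar\rho}$ produced above is one of finitely many points, and in particular it is isolated, so $\dim_\C X(K(0)) = 0$ at $\chi_{\bar\rho}$. This is precisely the first of the two conditions required by Proposition~\ref{prop:ciriterion_regularity}.

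It then remains to check that $\chi_{\bar\rho}$ is a smooth point of $X(K(0))$, and this is where I expect the only genuine difficulty to lie. For a $0$--dimensional locus, smoothness at an isolated point is equivalent to that point being reduced, which in turn is equivalent to the vanishing of the adjoint cohomology $H^1(\pi_1(K(0)); \sll)$ at $\chi_{\bar\rho}$; thus the task reduces to controlling the Zariski tangent space there, not merely the underlying set. I would argue that the finitely many irreducible characters supplied by Property L occur as isolated, non--degenerate points of $X(K(0))$, so that the reduced structure furnishes smoothness --- this is exactly the content of the phrase ``$X(K(0))$ consists of only points'' used in the discussion above. Granting this, both hypotheses of Proposition~\ref{prop:ciriterion_regularity} hold at the character of every irreducible metabelian $\rho$, and that proposition then yields the $\lambda$--regularity of $\rho$. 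Since $\rho$ was arbitrary, all irreducible metabelian $\SL$--representations of $\knotgroup$ are $\lambda$--regular.
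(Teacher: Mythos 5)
Your argument is essentially the paper's own proof: it feeds Property L (Proposition~\ref{prop:Property_L}) into the regularity criterion of Proposition~\ref{prop:ciriterion_regularity}, and the smoothness point you flag as the only genuine difficulty is settled in the paper exactly as you suggest --- a zero--dimensional component of the (reduced) character variety $X(K(0))$ is cut out by a maximal ideal by Hilbert's Nullstellensatz, hence is a reduced, smooth point with vanishing Zariski tangent space. No changes needed.
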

\begin{proof}
  It follows from Hilbert's Nullstellensatz and Propositions~\ref{prop:Property_L}
  that each component of the character variety $X(K(0))$ is defined by the maximal ideal $(X_1 - a_1, X_2 -a_2, \ldots, )$.
  By direct calculation,
  we can check that every conjugacy class of irreducible metabelian representations satisfies the conditions to be $\lambda$-regular
  in Proposition~\ref{prop:ciriterion_regularity}.
\end{proof}

\begin{proof}[Proof of Proposition~\ref{prop:ciriterion_regularity}]
  We assume that an $\SL$-representation $\rho$ 
  is irreducible and metabelian.
  First it follows from a Lin presentation (we refer to~\cite[Lemma~$2$]{NagasatoYamaguchi})
  that the induced homomorphism of $\knotgroup / \nclos{\lambda}$ is also irreducible.
  From the construction of $K(0) = \knotexterior \cup D^2 \times S^1$, in which the meridian circle
  $\partial D^2 \times \{*\}$ is glued along the longitude $\lambda$, we have 
  the Mayer--Vietoris exact sequence of homology group with the coefficient $\sll$:
  $$
  \cdots \to H_{i+1}(K(0)) \to
  H_i(\boundaryTorus) \to
  H_i(\knotexterior) \oplus H_i(D^2 \times S^1) \to
  H_i(K(0)) \to \cdots
  $$
  Since both representations of $\knotgroup$ and $\pi_1(K(0))$ are irreducible,
  in particular which are non--abelian, 
  it follows 
  that $H_0(\knotexterior;\sll)=\bm{0}$ and $H_0(K(0);\sll) \simeq H_3(K(0);\sll) =\bm{0}$.
  Since $Ad \circ \rho (\mu)$ has eigenvalues $\pm 1$ and the multiplicity of $-1$ is $2$,
  we choose $P^\rho \in \sll$ as an eigenvector of the eigenvalue $1$.
  Then the twisted homology groups $H_*(\boundaryTorus;\sll)$ and $H_*(D^2 \times S^1;\sll)$ is expressed as
  \begin{align*}
    H_*(\boundaryTorus;\sll) &\simeq
    \begin{cases}
      {}_{\C}\langle P^\rho \otimes T^2 \rangle & (*=2) \\
      {}_{\C}\langle P^{\rho} \otimes \mu, P^{\rho} \otimes \lambda \rangle & (*=1) \\
      {}_{\C}\langle P^\rho \otimes \bpt \rangle & (*=0)
    \end{cases}, \\
    H_*(D^2 \times S^1;\sll) &\simeq
    \begin{cases}
      {}_{\C}\langle P^{\rho} \otimes \mu \rangle & (*=1) \\
      {}_{\C}\langle P^\rho \otimes \bpt \rangle & (*=0)
    \end{cases}
  \end{align*}
  Now we start with if part.
  The assumption on $X(K(0))$ deduces that
  $$
  0= \dim_{\C} X(K(0)) = \dim_{\C} T^{Zar}_{\chi_{\bar \rho}} X(K(0)) = \dim_{\C} H_1(K(0);\sll)
  $$
  where $T^{Zar}_{\chi_{\bar \rho}} X(K(0))$ denotes the Zariski tangent space at the character $\chi_{\bar \rho}$.
  The second equality follows from that the character $\chi_{\bar \rho}$ is a smooth point and
  the third equality from that $\bar \rho$ is irreducible.
  We have also $H_2(K(0);\sll)=0$ by Poincar\'e duality and the universal coefficient theorem.
  Substituting these results in the Mayer--Vietoris exact sequence, we have
  \begin{equation}
    \label{eqn:basis_lambda_regular}
    H_2(\knotexterior;\sll) =
    {}_{\C}\langle P^\rho \otimes T^2 \rangle,\quad 
    H_1(\knotexterior;\sll) =
    {}_{\C}\langle P^\rho \otimes \lambda \rangle,
  \end{equation}
  which means $\rho$ is $\lambda$-regular.

  If $\rho$ is $\lambda$-regular, then we can express the twisted homology group $H_*(\knotexterior;\sll)$
  as Eq.~\eqref{eqn:basis_lambda_regular}.
  Using Mayer--Vietoris exact sequence, we obtain that $H_*(K(0);\sll)=0$.
  The following inequality of dimensions implies that $\dim_\C X(K(0)) =0$ and
  the character $\chi_{\bar \rho}$ is a smooth point:
  $$
  0 \leq \dim_{\C} X(K(0)) \leq \dim_{\C} T^{Zar}_{\chi_{\bar \rho}} X(K(0)) = \dim_{\C} H_1(K(0);\sll).
  $$
\end{proof}

For every two--bridge knot,
each factor in the right hand side of Eq.~\eqref{eqn:Main_Formula} is defined as non--zero number
since $H_*(\Sigma_2; \C_\xi) = 0$.
It is known that isotopy types of two--bridge knots correspond to
topological types of double branched covers, \ie
lens spaces.
The set of Reidemeister torsions for lens spaces and $\xi^k \oplus \xi^{-k}$ distinguishes 
topological types of lens spaces.
To be more precisely, Reidemeister torsion distinguishes the PL--isomorphism types of 
lens space, however since we have no differences between the PL--category and the topological category
in the three--dimensional topology, we can say that the Reidemeister torsion is a topological invariant.
Together with Theorem~\ref{thm:main_theorem},
we can conclude the following theorem
regarding the set of rights hand side in Eq.~\eqref{eqn:Main_Formula} as an invariant of two--bridge knots.
\begin{theorem}
  \label{thm:distinguish_twobridgeknots}
  Let $K$ be a two--bridge knot.
  For every irreducible metabelian $\SL$-representation of $\knotgroup$,
  we have
  \begin{itemize}
  \item $\twistedAlex[\sqrt{-1} t]{\knotexterior}{\rho} / (t^2-1)$ turns into a Laurent polynomial
    and gives a non--zero complex number at $t=1$;
  \item the rational function
    $\frac{I_\lambda^2 -4}{I_{\widehat{\mu}}^2 -4}
    \left(\frac{d I_{\widehat{\mu}}}{dI_\lambda}\right)^2$ gives a non--zero complex number
    at the character of $\rho$.
  \end{itemize}
  
  Moreover
  the set of the following products as in Eq.~\eqref{eqn:Main_Formula}:
  $$
  \left|
  \left(
    \lim_{t \to 1} \frac{\twistedAlex[\sqrt{-1}\,t]{\knotexterior}{\rho}}{t^2 -1}
    \right)^2
    \left.
    \frac{I_\lambda^2 -4}{I_{\widehat{\mu}}^2 -4}
    \left(\frac{d I_{\widehat{\mu}}}{dI_\lambda}\right)^2
    \right\rvert_{\chi = \chi_{\rho'}}
  \right|
  $$
  for $(|\Delta_K(-1)| -1)/2$ characters of irreducible metabelian representations
  distinguishes isotopy classes of two--bridge knots up to mirror images.
\end{theorem}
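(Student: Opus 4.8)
The plan is to deduce the theorem from Theorem~\ref{thm:main_theorem}, the classical fact that Reidemeister torsion is a complete homeomorphism invariant of lens spaces, and Schubert's correspondence between two--bridge knots and lens spaces realized as double branched covers. First I would verify that the hypotheses of Theorem~\ref{thm:main_theorem} hold for every two--bridge knot $K$ and every irreducible metabelian representation $\rho$. Since $\Sigma_2$ is a lens space, hence a rational homology sphere, every non--trivial $\xi \co \piDB \to \GL_1(\C)$ satisfies $H_*(\Sigma_2;\C_\xi)=0$ by the standard lens space torsion computation. The $\lambda$--regularity of $\rho'$, \ie the assumption of Lemma~\ref{lemma:determinant_MV}, I would check by cases: for the non--hyperbolic two--bridge knots, namely the $(2,q)$--torus knots, it follows from the explicit computation of Subsection~\ref{subsec:Torus_knots}, where the second factor equals $1/q^2$; for hyperbolic two--bridge knots it follows from Proposition~\ref{prop:Property_L} combined with Proposition~\ref{prop:ciriterion_regularity}. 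The two bullet points then follow: $\twistedAlex[\sqrt{-1}t]{\knotexterior}{\rho}/(t^2-1)$ is the Laurent polynomial $P(t)$ of Lemma~\ref{lemma:polynomial_P}, and it is non--zero at $t=1$ because the identity $\Tor{\coverExterior}{\sll}=\mp\,\Delta_K(1)\Delta_K(-1)P(1)^2$, obtained by combining Lemmas~\ref{lemma:covering_formula}, \ref{lemma:twistedAlex_Metabelian} and~\ref{lemma:polynomial_P} with Proposition~\ref{prop:limit_twistedAlex}, has non--zero left--hand side (torsion lies in $\C^*$) and $\Delta_K(\pm 1)\neq 0$; the rational function is likewise finite and non--zero at $\chi_{\rho'}$ under $\lambda$--regularity.

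Next I would invoke Theorem~\ref{thm:main_theorem} to identify each product in the displayed set with $|\Tor{\Sigma_2}{\C_\xi}|^2$, where $\xi$ is the non--trivial $\GL_1(\C)$--representation of $\piDB$ corresponding to $\rho$ under Proposition~\ref{prop:correspondence_abel_metabel}. As $\rho$ ranges over the $(|\Delta_K(-1)|-1)/2$ conjugacy classes of irreducible metabelian representations (Proposition~\ref{prop:fixed_points_involution}), the associated $\xi$ ranges over the non--trivial characters of $\piDB$ modulo the identification $\xi\sim\xi^{-1}=\bar\xi$ forced by $\Tor{\Sigma_2}{\C_\xi}=\overline{\Tor{\Sigma_2}{\C_{\xi^{-1}}}}$. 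Hence the unordered set of products is exactly the set of absolute values of the non--trivial Reidemeister torsions of the lens space $\Sigma_2$.

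Finally I would feed this into the classical picture. The set of torsions $\{|\Tor{\Sigma_2}{\C_\xi}|\}$ over non--trivial $\xi$ is a complete invariant of the homeomorphism type of a lens space (Franz--Reidemeister--de Rham), and Schubert's classification provides a bijection between isotopy classes of two--bridge knots up to mirror image and homeomorphism types of the lens spaces arising as their double branched covers. Combining these, two two--bridge knots share the same set of products if and only if their double branched covers are homeomorphic, if and only if the knots coincide up to mirror image.

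The step I expect to be the main obstacle is the bookkeeping at the level of orientations. Taking the absolute value $|\Tor{\Sigma_2}{\C_\xi}|$ discards the phase that would otherwise distinguish a lens space from its orientation reversal, and I must match this loss precisely with the mirror--image ambiguity on the knot side: passing to the mirror of $K$ reverses the orientation of $\Sigma_2$, sending $L(p,q)$ to $L(p,-q)$, and the set of absolute values of torsions cannot separate $L(p,q)$ from $L(p,-q)$. I need to confirm this is the \emph{only} ambiguity, so that the invariant is complete exactly up to mirror images and no finer. A secondary point requiring care is checking that $\rho\leftrightarrow\xi$ is a bijection onto the non--trivial characters modulo inversion, so that the set of $(|\Delta_K(-1)|-1)/2$ products recovers the full torsion spectrum of $\Sigma_2$ with correct multiplicities.
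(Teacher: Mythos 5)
Your proposal is correct and follows essentially the same route as the paper: verify the hypotheses of Theorem~\ref{thm:main_theorem} (acyclicity of $C_*(\Sigma_2;\C_\xi)$ for lens spaces, $\lambda$--regularity via the explicit torus--knot computation and via Property L for hyperbolic two--bridge knots), identify the products with $|\Tor{\Sigma_2}{\C_\xi}|^2$, and then combine Schubert's correspondence with the torsion classification of lens spaces. The only cosmetic difference is that you cite the completeness of lens--space torsion as a black box, where the paper runs the Franz Independence Lemma explicitly to extract the congruences $q'\equiv \pm q^{\pm 1} \pmod p$ and to match the absolute--value (orientation) ambiguity with the mirror--image ambiguity --- precisely the bookkeeping you flagged as the main obstacle.
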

\begin{proof}
  Without loss of generality, we can assume that
  two--bridge knots $K$ and $K'$ have the same knot determinant
  $|\Delta_K(-1)|=|\Delta_{K'}(-1)|=p$.
  This means that the corresponding double branched covers are $L(p, q)$ and $L(p, q')$.
  The Reidemeister torsion of $L(p, q)$ with $\xi \oplus \xi^{-1}$ for some $\GL_1(\C)$-representation
  of $\pi_1(L(p, q))$ is expressed as
  $$
   \big((\zeta -1)(\zeta^r-1)(\zeta^{-1} -1)(\zeta^{-r}-1)\big)^{-1}
  $$
  where $\zeta$ is a $p$-th root of unity and $r$ satisfies that $qr \equiv 1$ mod $p$.
  Since we suppose that the set of the Reidemeister torsions for $L(p, q)$ coincides with
  that for $L(p, q')$. Hence we can find the Reidemeister torsion for $L(p, q')$ satisfies that
  $$
  (\zeta -1)(\zeta^r-1)(\zeta^{-1} -1)(\zeta^{-r}-1)
  =(\zeta^{k} -1)(\zeta^{kr'}-1)(\zeta^{-k} -1)(\zeta^{-kr'}-1)
  $$
  Hence we have the following equivalent relations
  from the same argument in~\cite[Proof of Theorem~$10.1$]{Turaev:2000} and Franz Independent Lemma:
  $$
  1 \equiv \pm k,\, r \equiv \pm kr' \quad \hbox{or} \quad
  1 \equiv \pm kr',\, r \equiv \pm k \quad (\hbox{mod}\, p).
  $$
  This condition derives that $q' \equiv \pm q$ or $q' \equiv \pm q^{\pm 1}$.
  When $q'$ is congruent to $q^{\pm 1}$, two lens spaces are orientation preserving homeomorphic
  which means that $K$ and $K'$ are isotopic.  
  In the other case, two lens spaces are orientation reversing homeomorphic and
  $K$ is isomorphic to the mirror image of $K'$.
\end{proof}

Last, we discuss how to compute the twisted Alexander polynomial 
and the rational function on the character varieties for hyperbolic two--bridge knot exteriors.
To compute them,
we need explicit forms for
irreducible metabelian representations from a hyperbolic two--bridge knot group into $\SL$.
It is useful for this purpose
to apply Riley's method to make non--abelian $\SL$-representations of two--bridge knot groups.
Let $K$ be a hyperbolic two--bridge knot. 
We start with the following presentation of $\knotgroup$
obtained from a Wirtinger presentation: 
\begin{equation}
  \label{eqn:presentation_twobridgeknotgroup}
  \knotgroup =
  \langle x, y \,|\, wx = yw \rangle 
\end{equation}
where $x$ and $y$ are meridians and $w$ is a word in $x$ and $y$ as in Figure~\ref{fig:twobridge}.

\begin{figure}[ht]
  \begin{center}
    \includegraphics[scale=.5]{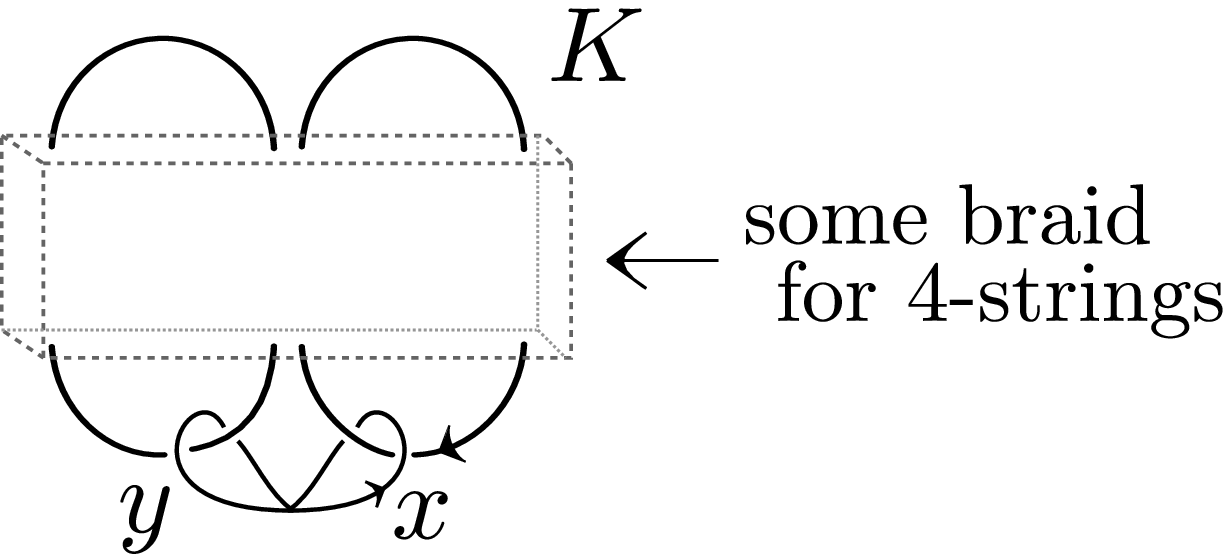}
  \end{center}
  \caption{The generators $x$ and $y$ for a two--bridge knot $K$}
  \label{fig:twobridge}
\end{figure}

By the method~\cite{Riley}, we have the following explicit form
of irreducible metabelian $\SL$-representation of $\knotgroup$.
\begin{lemma}[Theorem~$3$ in~\cite{NagasatoYamaguchi}]
  \label{lemma:metabelian_twobridge_parametrization}
  Let $p$ denote $|\Delta_K(-1)|$.
  For all conjugacy classes of irreducible metabelian $\SL$-representations,
  we can choose the following representatives:
  $$
  \{[\rho_k] \,|\, k = 1, \ldots, (p-1)/2\}
  $$
  where $\rho_k$ is an irreducible $\SL$-representation given by the correspondence:
  $$
  x \mapsto
  \begin{pmatrix}
    \sqrt{-1} & -\sqrt{-1} \\
    0 & -\sqrt{-1}
  \end{pmatrix},
  \quad
  y \mapsto
  \begin{pmatrix}
    \sqrt{-1} & 0 \\
    -\sqrt{-1}u_k & -\sqrt{-1}
  \end{pmatrix} 
  \, (u_k = (e^{k \pi \sqrt{-1}/p} - e^{-k \pi \sqrt{-1}/p})^2).
  $$
  Moreover the representative $\rho_k$ corresponds to the $\GL_1(\C)$-representation $\xi^k$:
  $$
  \xi^k \co \piDB = \langle \gamma \,|\, \gamma^p = 1\rangle \ni \gamma
  \mapsto e^{2k\pi\sqrt{-1}/p} \in \GL_1(\C)
  $$
  where $\gamma$ is a lift of $xy^{-1}$ in $\knotgroup$ into $\piDB$. 
\end{lemma}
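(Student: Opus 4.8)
The statement is Theorem~$3$ of~\cite{NagasatoYamaguchi}, so the plan is to reconstruct the two ingredients it bundles together: the classification of irreducible metabelian representations into a one--parameter normal form, and the explicit matching of each class with a character $\xi^k$ of $\piDB$. I would route the argument through the correspondence of Proposition~\ref{prop:correspondence_abel_metabel} rather than through a direct study of Riley's polynomial, since this makes the defining relation $wx=yw$ automatic. First I would fix the count: for a two--bridge knot $\Sigma_2$ is a lens space, so $\piDB=\langle\gamma\mid\gamma^p=1\rangle\cong\Z/p\Z$ with $p=|\Delta_K(-1)|$ by Fox's formula, and $\gamma$ may be taken as the class of a lift of $xy^{-1}$. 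The nontrivial characters are $\xi^k\co\gamma\mapsto e^{2k\pi\sqrt{-1}/p}$ for $k=1,\dots,p-1$, and $\xi^k\oplus\xi^{-k}$ is conjugate to $\xi^{p-k}\oplus\xi^{-(p-k)}$; hence Proposition~\ref{prop:correspondence_abel_metabel} produces exactly $(p-1)/2$ conjugacy classes of irreducible metabelian representations, indexed by $k=1,\dots,(p-1)/2$ (the remaining trivial $\xi$ accounting for the extra reducible class in that proposition).

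Next I would produce the normal form by Riley's method. Because $\rho$ is irreducible and metabelian, the remark following Proposition~\ref{prop:correspondence_abel_metabel} gives $\trace\rho(x)=\trace\rho(y)=0$, so both meridian images are trace--zero, determinant--one matrices with eigenvalues $\pm\sqrt{-1}$. Using the conjugation freedom I would put $\rho(x)$ into the displayed upper--triangular trace--zero form and $\rho(y)$ into the lower--triangular form carrying a single off--diagonal parameter $u$; irreducibility forces $u\neq0$, and the residual conjugation fixing $\rho(x)$ pins down the stated normalization. A direct computation then gives $\rho(xy^{-1})=\begin{pmatrix}1+u & 1\\ u & 1\end{pmatrix}$, with determinant $1$ and trace $2+u$.

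I would identify the parameter through the branched--cover side. Since $[xy^{-1}]=0$ in $H_1(\knotexterior;\Z)$, the loop $xy^{-1}$ lifts to $\coverExterior$ and the sign refinement contributes the trivial factor $(\sqrt{-1})^{0}=1$, so under the correspondence its image is conjugate to $\mathrm{diag}\big(\xi^k(\gamma),\xi^{-k}(\gamma)\big)=\mathrm{diag}\big(e^{2k\pi\sqrt{-1}/p},e^{-2k\pi\sqrt{-1}/p}\big)$. Matching traces yields $2+u=2\cos(2k\pi/p)$, that is $u=u_k=(e^{k\pi\sqrt{-1}/p}-e^{-k\pi\sqrt{-1}/p})^2$, the asserted value, which is nonzero for $1\le k\le(p-1)/2$. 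The representations $\rho_k$ so obtained are the pull--backs with sign refinement of $\xi^k\oplus\xi^{-k}$, so they automatically satisfy $wx=yw$ and are metabelian; they are pairwise non--conjugate because the traces $\trace\rho_k(xy^{-1})=2\cos(2k\pi/p)$ are distinct for $k=1,\dots,(p-1)/2$; and by construction $\rho_k$ corresponds to $\xi^k$, completing the correspondence.

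The main obstacle is precisely the link between the abstract correspondence and the concrete matrices, i.e.\ showing that the metabelian condition forces the eigenvalues of $\rho(xy^{-1})$ to be $p$--th roots of unity and thus selects $u=u_k$. Routing through Proposition~\ref{prop:correspondence_abel_metabel} resolves this cleanly, because the diagonal representation $\xi\oplus\xi^{-1}$ of $\Z/p\Z$ directly encodes that eigenvalue constraint and simultaneously guarantees both the knot relation and metabelianness. If instead one insisted on a self--contained verification starting from the presentation~\eqref{eqn:presentation_twobridgeknotgroup}, the work would shift to checking that the one--variable Riley equation for a trace--zero meridian has exactly the roots $u_k$, together with a direct check that the commutator subgroup lands in the diagonal torus; both are more laborious, which is why the branched--cover route is preferable.
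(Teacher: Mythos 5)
The paper does not actually prove this statement: it is imported verbatim as Theorem~$3$ of~\cite{NagasatoYamaguchi}, so there is no internal proof to compare against. Judged on its own, your reconstruction is sound and follows the natural route: it uses exactly the ingredients the paper already quotes (the bijection of Proposition~\ref{prop:correspondence_abel_metabel}, the trace--zero condition on meridians for irreducible metabelian representations, and Riley's normal form), and the arithmetic checks out --- $\rho(xy^{-1})=\left(\begin{smallmatrix}1+u&1\\ u&1\end{smallmatrix}\right)$ has trace $2+u$, and $u_k+2=2\cos(2k\pi/p)$, so matching against $\trace(\xi^k\oplus\xi^{-k})(\gamma)$ pins down $u=u_k$; distinctness of these traces for $k=1,\dots,(p-1)/2$ together with~\cite[Proposition~$1.5.2$]{CS:1983} gives pairwise non--conjugacy. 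The one step you pass over silently is that the lift of $xy^{-1}$ actually generates $\piDB\cong\Z/p\Z$: without this, the characters of $\piDB$ need not be exhausted by their values on $\gamma$, and the trace--matching that identifies $u$ with $u_k$ would be meaningless. This is standard for two--bridge knots (the bridge decomposition induces the genus--one Heegaard splitting of the lens space $\Sigma_2$, and $\liftknotgroup/\nclos{\widehat{\mu}}$ is generated by the class of $xy^{-1}$, with order $p=|\Delta_K(-1)|$ by Fox's formula), but it is a genuine input rather than part of the given data, so it deserves an explicit sentence. With that addition your argument is complete, and it is arguably cleaner than a self--contained verification via the Riley polynomial $\phi(-1,u)$, for the reason you give.
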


\subsection*{The twisted Alexander polynomial}
We simply denote by $\rho$ an irreducible metabelian representation
in Lemma~\ref{lemma:metabelian_twobridge_parametrization}.
We follow the definition and computation of the twisted Alexander polynomial along Wada's way.
The twisted Alexander polynomial for $\knotexterior$ and $\rho$ is expressed as
$$
\twistedAlex{\knotexterior}{\rho}
=
\frac{
  \det \left(
  \alpha \otimes \rho
  \left(
  \frac{d}{dx} wxw^{-1}y^{-1}
  \right)
  \right)
}{
  \det (t \,\rho(y) -\I)
}
$$
where $\frac{d}{dx}$ denotes the Fox differential by $x$.

The Fox differential $\frac{d}{dx} wxw^{-1}y^{-1}$ turns into $w + (1-wxw^{-1})\frac{d}{dx}w$.
The numerator is a Laurent polynomial whose coefficients are polynomials in $\C[u_k]$.
The denominator $\det(t \rho(t) - \I)$ turns into $t^2+1$.

\subsection*{The rational function}
To express the rational function 
$\frac{I_\lambda^2 -4}{I_{\widehat{\mu}}^2 -4}
\left(\frac{d I_{\widehat{\mu}}}{dI_\lambda}\right)^2$,
we need to find which representative gives another metabelian $\rho'$ whose adjoint representation
is conjugate to $\signrep \oplus \sqrtrep \rho$.
\begin{proposition}
  We assume that $\rho$ denotes $\rho_k$ in Lemma~\ref{lemma:metabelian_twobridge_parametrization}.
  Then we can choose another metabelian representation $\rho'$
  in Theorem~\ref{thm:distinguish_twobridgeknots}
  as $\rho_{k'}$ in Lemma~\ref{lemma:metabelian_twobridge_parametrization}
  where $k'$ is an integer in $\{1, \ldots, (p-1)/2\}$ satisfying
  $2k' \equiv k$ or $2k' \equiv -k$ {\rm mod} $p$.
\end{proposition}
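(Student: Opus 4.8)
The plan is to identify $\rho'$ directly among the representatives $\rho_{k'}$ of Lemma~\ref{lemma:metabelian_twobridge_parametrization}. Since $\rho'$ is itself an irreducible metabelian $\SL$-representation and that lemma exhausts all of their conjugacy classes, $\rho'$ is conjugate to a unique $\rho_{k'}$ with $k'\in\{1,\dots,(p-1)/2\}$; the entire task is to compute this $k'$. My strategy is to read it off by evaluating the defining relation $Ad\circ\rho' = \signrep\oplus\sqrtrep\rho$ of Lemma~\ref{lemma:adjoint_metabelian} on a single, well-chosen element of $\knotgroup$.

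The element I would use is $\gamma = xy^{-1}$. First I would note that $\gamma$ lies in the commutator subgroup $[\knotgroup,\knotgroup]$: the generators $x,y$ in presentation~\eqref{eqn:presentation_twobridgeknotgroup} are both meridians, hence they have the same image under $\alpha$ and $[\gamma]=0$. Therefore $\signrep(\gamma)=1$ and $\sqrtrep(\gamma)=1$, so on $\gamma$ the relation of Lemma~\ref{lemma:adjoint_metabelian} collapses to $Ad_{\rho'(\gamma)} = 1\oplus\rho(\gamma)$ (up to the conjugation by $C$, which I may ignore since I will only compare eigenvalues). This $\gamma$ is also the element whose image generates $\piDB$ and on which the parametrization of Lemma~\ref{lemma:metabelian_twobridge_parametrization} is phrased, so it is the natural place to extract the parameter.

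Next I would compute both sides as eigenvalues. A direct product of the matrices in Lemma~\ref{lemma:metabelian_twobridge_parametrization} gives
\[
\rho_k(xy^{-1}) = \begin{pmatrix} 1+u_k & 1 \\ u_k & 1 \end{pmatrix},\qquad \trace\rho_k(xy^{-1}) = 2+u_k = 2\cos(2k\pi/p),
\]
so $\rho_k(xy^{-1})$ has eigenvalues $e^{\pm 2k\pi\sqrt{-1}/p}$, and likewise $\rho'(\gamma)=\rho_{k'}(xy^{-1})$ has eigenvalues $z^{\pm 1}$ with $z=e^{2k'\pi\sqrt{-1}/p}$. Recalling that $Ad_A$ has eigenvalues $1,z^{\pm 2}$ when $A$ has eigenvalues $z^{\pm 1}$, the left side $Ad_{\rho'(\gamma)}$ has eigenvalues $1, e^{\pm 4k'\pi\sqrt{-1}/p}$, while the right side $1\oplus\rho(\gamma)$ has eigenvalues $1, e^{\pm 2k\pi\sqrt{-1}/p}$. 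Matching the nontrivial eigenvalues forces $e^{\pm 4k'\pi\sqrt{-1}/p} = e^{\pm 2k\pi\sqrt{-1}/p}$, that is $4k'\equiv\pm 2k\pmod{2p}$, which is exactly $2k'\equiv\pm k\pmod p$.

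The part I expect to require the most care is not this computation but the final bookkeeping that makes $k'$ well defined in the range $\{1,\dots,(p-1)/2\}$ and justifies using only the one element $\gamma$. For the latter, I would note that by Lemma~\ref{lemma:metabelian_twobridge_parametrization} the conjugacy class $[\rho_{k'}]$ is completely recorded by $u_{k'}$, equivalently by the eigenvalue of $\rho_{k'}(xy^{-1})$; hence determining $\rho'(\gamma)$ already determines $[\rho']$, and normalizing $\rho'$ as in Definition~\ref{def:irred_metarep_form} makes the comparison unambiguous. For the former, I would use that $p=|\Delta_K(-1)|$ is odd, so $2$ is invertible modulo $p$ and each congruence $2k'\equiv\pm k$ has a unique solution modulo $p$; the two solutions are interchanged by $k'\mapsto p-k'$, and since $u_{k'}=u_{p-k'}$ they represent the same class $[\rho_{k'}]$. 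Reconciling the two signs with this $k'\leftrightarrow p-k'$ folding is the only genuinely delicate point.
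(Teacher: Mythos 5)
Your proposal is correct and follows essentially the same route as the paper: both evaluate the decomposition $Ad\circ\rho'=\signrep\oplus\sqrtrep\rho$ on the element $xy^{-1}$ (whose homology class vanishes) and compare the spectral data of $Ad_{\rho'(xy^{-1})}$ with that of $\I\oplus\rho(xy^{-1})$, arriving at $2k'\equiv\pm k \pmod p$. The paper phrases the comparison via traces, $(u_{k'}+2)^2-1 = 1+u_k+2$, while you match eigenvalues directly; these are equivalent, and your added remarks on the $k'\leftrightarrow p-k'$ folding only make explicit what the paper leaves implicit.
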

\begin{proof}
  We compare the traces of the images of $xy^{-1}$ by $Ad \circ \rho'$ and
  $\signrep \oplus \sqrtrep \rho$.
  Under the notation in Lemma~\ref{lemma:metabelian_twobridge_parametrization},
  the traces of $\rho (xy^{-1})$ and $\rho'(xy^{-1})$
  are equal to $u_k + 2$ and $u_{k'}+2$.
  If an $\SL$-element $A$ has the eigenvalues $\zeta^{\pm 1}$, then
  the adjoint action has the eigenvalues $\zeta^{\pm 2}$ and $1$.
  Hence the trace of $Ad \circ \rho' (xy^{-1})$ is expressed as
  $$
  \trace \big( Ad \circ \rho' (xy^{-1}) \big)= 1+(u_{k'}+2)^2 -2 = (u_{k'}+2)^2 -1.
  $$
  Since the homology class of $xy^{-1}$ is trivial, the trace of the corresponding matrix by
  $\signrep \oplus \sqrtrep \rho$ is expressed as
  $$
  \trace \big( (-1)^{[xy^{-1}]} \oplus (\sqrt{-1})^{[xy^{-1}]} \rho(xy^{-1}) \big)
  = 1 + u_k +2.
  $$
  Since these traces are same, we have the equality:
  $$
  (e^{2k' \pi \sqrt{-1}/p} + e^{-2k' \pi \sqrt{-1}/p})^2
  = (e^{k \pi \sqrt{-1}/p} + e^{-k \pi \sqrt{-1}/p})^2
  $$
  We can rewrite this as
  $$
  \cos \frac{2k' \pi}{p}=
  \pm \cos \frac{k \pi}{p}
  $$
  which means that $2k' = k$ or $2k'=p-k$.
\end{proof}

It is known that $\lambda$ is expressed as $\overleftarrow{w} w x^{-2\sigma}$ 
in the presentation~\eqref{eqn:presentation_twobridgeknotgroup}
where $\overleftarrow{w}$ is the word of reverse order of $w$ and $\sigma$ is an integer
given by the sum of exponents of $x$ and $y$ in $w$.
Riley's construction of non--abelian representations follows from the correspondence:
\begin{equation}
  \label{eqn:Riley_correspondence}
  x \mapsto
  \begin{pmatrix}
    \sqrt{s} & 1/\sqrt{s} \\
    0 & 1/\sqrt{s}
  \end{pmatrix},
  \quad
  y \mapsto
  \begin{pmatrix}
    \sqrt{s} & 0 \\
    -u \sqrt{s} & 1/\sqrt{s}
  \end{pmatrix}
\end{equation}
and the equality:
\begin{equation}
  \label{eqn:Riley_poly}
  W_{1, 1} + (1-s)W_{1, 2} = 0
\end{equation}
where $W_{i, j}$ is the $(i, j)$-entry of the matrix corresponding to $w$.
The left hand side of Eq.~\eqref{eqn:Riley_poly} is a polynomial in $s + 1/s$ and $u$.
Hence we can regard the trace function $I_\lambda$ as a function on $s+1/s$, 
in particular, a function on $I_{\widehat{\mu}} = I_{\mu^2}$.

Let $\rho_{\sqrt{s}, u}$ denote the $\SL$-representation defined by the correspondence in 
Eq.~\eqref{eqn:Riley_correspondence}.
From Remark~\ref{remark:rest_adjoint_rep},
the trace function $I_\lambda$ gives $2$
at $(s, u) = (-1, u_{k'})$. More precisely, the behavior of $I_\lambda$ is expressed as follows.
\begin{proposition}
  We can express the trace function $I_\lambda$ as a function 
  $$
  I_\lambda - 2= - I_\mu^2 \cdot H(I_\mu) \quad \text{and} \quad H(0) \not = 0
  $$
  on $I_\mu$ near $(s, u) = (-1, u_{k'})$
  and the value of the rational function
  $\frac{I_\lambda^2 -4}{I_{\widehat{\mu}}^2 -4}
  \left(\frac{d I_{\widehat{\mu}}}{dI_\lambda}\right)^2$ at the character of $\rho'$
  as
  $$
  \left.
  \frac{I_\lambda^2 -4}{I_{\widehat{\mu}}^2 -4}
  \left(\frac{d I_{\widehat{\mu}}}{dI_\lambda}\right)^2
  \right|_{\chi = \chi_{\rho'}}
  =
  \frac{1}{H(0)}.
  $$
\end{proposition}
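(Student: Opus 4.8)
The plan is to read off the local behaviour of the three trace functions at the metabelian character $\chi_{\rho'}$ and then to reduce the rational function to a single derivative. First I would record the values at $(s,u) = (-1, u_{k'})$. Here $\rho'(\mu)$ has eigenvalues $\sqrt{s},\,1/\sqrt{s}$ with $s=-1$, so $I_\mu = \sqrt{s}+1/\sqrt{s}=0$, and since $\trace(A^2)=\trace(A)^2-2$ we get $I_{\widehat{\mu}} = I_{\mu^2} = I_\mu^2-2 = -2$; moreover every metabelian representation sends $\lambda$ to $\I$ (Remark~\ref{remark:rest_adjoint_rep}), so $I_\lambda = 2$. By the Riley equation~\eqref{eqn:Riley_poly} the entries of the matrix of $w$, hence of the longitude word $\overleftarrow{w}wx^{-2\sigma}$, are polynomials in $s+1/s$ and $u$; solving the Riley polynomial for $u$ as an analytic function of $s$ along the smooth one--dimensional component through $\chi_{\rho'}$ exhibits $I_\lambda$ as an analytic function of $s+1/s = I_{\widehat{\mu}} = I_\mu^2-2$. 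In particular $I_\lambda$ depends on $I_\mu$ only through $I_\mu^2$, so it is even in $I_\mu$; together with $I_\lambda(0)=2$ this yields the asserted form
$$
I_\lambda - 2 = -I_\mu^2\, H(I_\mu), \qquad H(0) = -\left.\frac{dI_\lambda}{dI_{\widehat{\mu}}}\right|_{\chi_{\rho'}},
$$
with $H$ an even analytic function of $I_\mu$.

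The substantive point, and the main obstacle, is the nonvanishing $H(0)\neq 0$, i.e.\ that $I_\lambda$ has nonzero derivative in $I_{\widehat{\mu}}$ at $\chi_{\rho'}$. I would deduce this from regularity rather than by a direct expansion of the Riley polynomial. The representation $\rho'$ is $\lambda$--regular by the corollary for hyperbolic two--bridge knots, and it is $\widehat{\mu}$--regular because $H_*(\Sigma_2;\C_\xi)=\bm{0}$ forces $\mu$--regularity, as in the remark following Theorem~\ref{thm:main_theorem}. Hence both $P^\rho\otimes\lambda$ and $P^\rho\otimes\widehat{\mu}$ are generators of the one--dimensional space $H_1(\knotexterior;\sll)$, so their base--change determinant $[P^\rho\otimes\widehat{\mu}/P^\rho\otimes\lambda]$ is a finite nonzero number. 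By the change--of--loops formula Lemma~\ref{lemma:basechange_formula} its square equals the rational function $\tfrac{I_\lambda^2-4}{I_{\widehat{\mu}}^2-4}(dI_{\widehat{\mu}}/dI_\lambda)^2$ at $\chi_{\rho'}$; since the computation below identifies this value with $1/H(0)$ whenever $H(0)\neq 0$, and since regularity forces the value to be a genuine finite nonzero complex number, the only consistent possibility is $H(0)\in\C^*$. (If instead $H(0)=0$ the two factors degenerate into an indeterminate $0\cdot\infty$, which is exactly the case regularity excludes.)

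Finally I would compute the value by substituting the local expansions, the point being that the spurious factors of $I_\mu$ cancel. From $I_{\widehat{\mu}}=I_\mu^2-2$ one gets $I_{\widehat{\mu}}^2-4 = I_\mu^2(I_\mu^2-4)$ and $dI_{\widehat{\mu}}/dI_\mu = 2I_\mu$, while $I_\lambda^2-4 = (I_\lambda-2)(I_\lambda+2) = -I_\mu^2 H(I_\mu)(I_\lambda+2)$ and $dI_\lambda/dI_\mu = -\bigl(2H(I_\mu)+I_\mu H'(I_\mu)\bigr)I_\mu$. The chain rule then gives
$$
\frac{dI_{\widehat{\mu}}}{dI_\lambda} = \frac{2I_\mu}{-\bigl(2H(I_\mu)+I_\mu H'(I_\mu)\bigr)I_\mu} = \frac{-2}{2H(I_\mu)+I_\mu H'(I_\mu)},
$$
which equals $-1/H(0)$ at $I_\mu=0$, and
$$
\frac{I_\lambda^2-4}{I_{\widehat{\mu}}^2-4} = \frac{-H(I_\mu)(I_\lambda+2)}{I_\mu^2-4},
$$
which equals $\tfrac{-H(0)\cdot 4}{-4}=H(0)$ at $I_\mu=0$. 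Multiplying the two limits yields $H(0)\cdot H(0)^{-2} = 1/H(0)$, the asserted value; every intermediate quantity is finite at $\chi_{\rho'}$ precisely because $H(0)\neq 0$, so the evaluation is legitimate.
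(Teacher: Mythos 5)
Your proposal is correct and follows essentially the same route as the paper: both expand $I_\lambda-2$ locally as $-I_\mu^2\,H(I_\mu)$ at the metabelian character, deduce $H(0)\neq 0$ from the fact that regularity forces the rational function to be a well--defined non--zero number, and then evaluate by the chain rule using $I_{\widehat{\mu}}=I_\mu^2-2$. The only cosmetic difference is that you first derive the even vanishing order from $I_\lambda$ being a function of $s+1/s$ and then invoke non--degeneracy to pin down $H(0)\neq 0$, whereas the paper writes $I_\lambda-2=I_\mu^k\,H(I_\mu)$ with $H(0)\neq 0$ and uses the same non--degeneracy (via Theorem~\ref{thm:distinguish_twobridgeknots}) to conclude $k=2$.
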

\begin{proof}
  The character of $\rho'$ corresponds to the pair $(-1, u_{k'})$.
  The function $I_\lambda -2$ has zero at $((-1, u_{k'}))$.
  Hence $I_\lambda -2$ is expressed as $I_\mu^k \cdot H(I_\mu)$ locally
  where $k$ is a positive integer and  $H(0) \not = 0$.
  Theorem~\ref{thm:distinguish_twobridgeknots} guarantees that the rational function
  $\frac{I_\lambda^2 -4}{I_{\widehat{\mu}}^2 -4}
  \left(\frac{d I_{\widehat{\mu}}}{dI_\lambda}\right)^2$
  is well--defined as a non--zero complex number, which deduces $k=2$.

  We can rewrite the rational function
  $\frac{I_\lambda^2 -4}{I_{\widehat{\mu}}^2 -4}
  \left(\frac{d I_{\widehat{\mu}}}{dI_\lambda}\right)^2$
  as
  \begin{align*}
  \frac{I_\lambda^2 -4}{I_{\widehat{\mu}}^2 -4}
  \left(\frac{d I_{\widehat{\mu}}}{dI_\lambda}\right)^2
  &=
  \frac{I_\lambda^2 -4}{I_\mu^2 -4} \frac{I_\mu^2 -4}{I_{\widehat{\mu}}^2 -4}
  \left(\frac{d I_{\widehat{\mu}}}{dI_\mu}\right)^2
  \left(\frac{d I_\mu}{dI_\lambda}\right)^2 \\
  &=
  4 
  \frac{I_\lambda^2 -4}{I_\mu^2 -4} 
  \left(\frac{d I_\mu}{dI_\lambda}\right)^2
  \end{align*}
  since $I_{\widehat{\mu}}$ is $I_\mu^2 -2$.
  Substituting $I_\lambda = -I_\mu^2 \cdot H(I_\mu)$, we can deduce the proposition.
\end{proof}
\begin{remark}
  If we choose a parameter as $I_{\widehat{\mu}} (=s+1/s)$, then we have 
  $I_\lambda = -(I_{\widehat{\mu}}+2) \widehat{H}(I_{\widehat{\mu}})$ where
  $\widehat{H}(-2) \not = 0$.
  The value of $\frac{I_\lambda^2 -4}{I_{\widehat{\mu}}^2 -4}
  \left(\frac{d I_{\widehat{\mu}}}{dI_\lambda}\right)^2$
  is given by $1/ \widehat{H}(-2)$.
\end{remark}

In the case that $K$ is the figure eight knot,
the Alexander polynomial $\Delta_K(t)$ is $t^2 -3t +1$. 
We have the two conjugacy classes of irreducible metabelian representations of $\knotgroup$
since $|\Delta_K(-1)|=5$.
For every irreducible metabelian representation $\rho$,
the twisted Alexander polynomial $\twistedAlex{\knotexterior}{\rho}$ is given by $t^2+1$
(we refer to~\cite[Section~$5.2$]{yamaguchi:twistedAlexMeta}).
From Example~$1$ in~\cite[Section~$4.4$]{Porti:1997}, we can see that
$I_\lambda - 2 = -I_\mu^2(-I_\mu^2+5)$.
Therefore every metabelian representation gives
the product in Theorem~\ref{thm:distinguish_twobridgeknots} as
$$
\left(\lim_{t \to 1} \frac{-t^2+1}{t^2-1}\right)^2
\frac{1}{-0^2+5}
= \frac{1}{5}.
$$
This coincides with the Reidemeister torsion of $\Sigma_2 = L(5,3)$ for $\xi^k \oplus \xi^{-k}$
$$
\left( \frac{1}{4\sin(2 k\pi/5) \sin( k \pi/5)} \right)^2
= \frac{1}{5}
$$
for $k=1,2$.

\section*{Acknowledgment}
The author wishes to express his thanks to Kunio Murasugi for drawing the author's attention to 
the classification of two--bridge knots by the twisted Alexander polynomial.
This research was supported by 
Research Fellowships of the Japan Society for the Promotion of Science for Young Scientists.

\bibliographystyle{amsalpha}
\bibliography{torsionDoubleBranchedCovers}
\end{document}